\newtheorem{theorem}{Theorem}[section]
\newtheorem{definition}[theorem]{Definition}
\newtheorem{lemma}[theorem]{Lemma}
\newtheorem{proposition}[theorem]{Proposition}
\newtheorem{corollary}[theorem]{Corollary}
\newtheorem{remark}[theorem]{Remark}
\newtheorem{example}[theorem]{Example}
\newcommand{\hh}{{\mathbb{H}}}
\newcommand{\cc}{{\mathbb{C}}}
\newcommand{\rr}{{\mathbb{R}}}
\newcommand{\nn}{{\mathbb{N}}}
\newcommand{\J}{\mathcal{J}}
\newcommand{\s}{{\mathbb{S}}}
\newcommand{\A}{\mathcal{A}}
\newcommand{\I}{\mathcal{I}}
\renewcommand{\L}{\mathcal{L}}
\newcommand{\B}{\mathcal{B}}
\newcommand{\M}{\mathcal{M}}
\newcommand{\Q}{\mathcal{Q}}
\newcommand\re{\operatorname{Re}}
\newcommand\im{\operatorname{Im}}
\newcommand\tr{\operatorname{tr}}
\newcommand\rank{\operatorname{rank}}
\newcommand\aut{\operatorname{Aut}}
\newcommand\antiaut{\operatorname{Anti}}
\newcommand\lin{\operatorname{Lin}}
\newcommand\jac{\operatorname{Jac}}
\title{\bf Which Fueter-regular functions are holomorphic?}
\author{Alessandro Perotti\\
\small Dipartimento di Matematica, Universit\`a di Trento\\ 
\small Via Sommarive 14, I-38123 Povo Trento, Italy\\
\small alessandro.perotti@unitn.it
\thanks{Partly supported by: GNSAGA INdAM; Progetto ``Teoria delle funzioni ipercomplesse e applicazioni'' Universit\`a di Firenze.}\\
\and
Caterina Stoppato
\\ 
\small Dipartimento di Matematica e Informatica ``U. Dini'', Universit\`a di Firenze \\
\small Viale Morgagni 67/A, I-50134 Firenze, Italy\\
\small caterina.stoppato@unifi.it
\thanks{Partly supported by: GNSAGA INdAM; Progetto ``Teoria delle funzioni ipercomplesse e applicazioni'' Universit\`a di Firenze; PRIN 2022 ``Real and complex manifolds: geometry and holomorphic dynamics'' MIUR; Finanziamento Premiale ``Splines for accUrate NumeRics: adaptIve models for Simulation Environments'' INdAM.}
}
\date{  }
\begin{document}

\maketitle
\thanks{\footnotesize \it This work stems from a question posed by Massimo Tarallo (Universit\`a di Milano), who tragically passed away in 2019. The authors gratefully acknowledge his contribution to this work. Massimo is very much missed by those who knew him.}

%%%%%%%%%%%%%%%%%%%%%%%

\begin{abstract}
We provide a classification of Fueter-regular quaternionic functions $f$ in terms of the degree of complex linearity of their real differentials $df$. Quaternionic imaginary units define orthogonal almost-complex structures on the tangent bundle of the quaternionic space by left or right multiplication. Every map of two complex variables that is holomorphic with respect to one of these structures defines a Fueter-regular function. We classify the differential $df$ of a Fueter-regular function $f$, roughly speaking, in terms of how many choices of complex structures make $df$ complex linear. It turns out that, generically, $f$ is not holomorphic with respect to any choice of almost-complex structures. In the special case when it is indeed holomorphic, generically there is a unique choice of almost-complex structures making it holomorphic. The case of holomorphy with respect to several choices of almost-complex structures is limited to conformal real affine transformations or constants.
\end{abstract}

%%%%%%%%%%%%%%%%%%%%%%%

\section{Introduction}\label{sec:introduction}

Let $\hh=\rr+i\rr+j\rr+k\rr$ denote the real $*$-algebra of quaternions. Finding an analog, in the quaternionic setting, of the class of holomorphic complex-valued functions of one complex variable is a very natural question, which received a first positive answer in Rudolf Fueter's works~\cite{fueter1,fueter2,fueter4,fueter3} (see~\cite{sudbery} for a modern exposition). Let $U$ be a connected open subset of $\hh$ and let $f:U\to\hh,\ x=x_0+ix_1+jx_2+kx_3\mapsto f(x)$ be a $C^1$ function: then $f$ is called \emph{right Fueter-regular} if it belongs to the kernel of the \emph{right Cauchy-Riemann-Fueter operator} 
\[\bar\partial_r:=\frac12\left(\frac{\partial}{\partial x_0}+\frac{\partial}{\partial x_1}i+\frac{\partial}{\partial x_2}j+\frac{\partial}{\partial x_3}k\right)\,.\]
Right Fueter-regularity is an interesting refinement of harmonicity in the four real variables $x_0,x_1,x_2,x_3$: indeed, the real Laplacian can be factorized as
\[\Delta=4\bar\partial_r\partial_r=4\partial_r\bar\partial_r\,,\quad\partial_r:=\frac12\left(\frac{\partial}{\partial x_0}-\frac{\partial}{\partial x_1}i-\frac{\partial}{\partial x_2}j-\frac{\partial}{\partial x_3}k\right)\,.\]
In particular, right Fueter-regular functions are real analytic. Similar considerations apply to the class of \emph{left Fueter-regular} functions, defined as the kernel of \emph{left Cauchy-Riemann-Fueter operator} 
\[\bar\partial_l:=\frac12\left(\frac{\partial}{\partial x_0}+i\frac{\partial}{\partial x_1}+j\frac{\partial}{\partial x_2}+k\frac{\partial}{\partial x_3}\right)\,.\]
For more details on Cauchy-Riemann-Fueter operators and Fueter-regular functions, we refer the reader to~\cite{soucek,sudbery}. A bijective right Fueter-regular function $f:U\to f(U)$ is called \emph{RL-biregular} if $f^{-1}$ is left Fueter-regular and \emph{RR-biregular} if $f^{-1}$ is right Fueter-regular. Similarly, a left Fueter-regular function is called \emph{LR-biregular} if it admits a right Fueter-regular inverse, or \emph{LL-biregular} if it admits a left Fueter-regular inverse.
About biregularity, we refer the reader to~\cite{krolikowskiporter}. A rich quaternionic function theory has thus been developed, including integral and series representations, not only in one variable but also in several quaternionic variables (see, e.g.,~\cite{pertici}). Fueter-regularity has been generalized to the notion of \emph{q-holomorphic} map between hypercomplex manifolds, see~\cite{joyce}. In particular, the class of q-holomorphic self-maps of $\hh$ coincides with the class of right Fueter-regular functions. Fueter-regularity has also been generalized to the notion of \emph{quaternionic map} between hyperk\"ahler manifolds, see~\cite{chenli}. Analogs of Fueter's theory over other alternative algebras, such as the the theory of \emph{monogenic} functions with values in a real Clifford algebra $C\ell(0,n)$ with variable in the paravector subspace $\rr^{n+1}$, have also been extremely successful. We refer the interested reader to the monographs~\cite{librosommen,librocnops,librogurlebeck2}.

Within this broad panorama, it is a matter of some interest to relate Fueter-regularity in one quaternionic variable and holomorphy in two complex variables. While it is easy to find a copy of the class of holomorphic functions $(f_1,f_2):\cc^2\to\cc^2$ among right Fueter-regular functions as $f(z_1+z_2j)=f_1(z_1,z_2)+jf_2(z_1,z_2)$, investigating holomorphy with respect to (possibly nonconstant) orthogonal almost-complex structures of general Fueter-regular functions requires quite some work. The article~\cite{perottibiregular} proved that all biregular functions are biholomorphic, using tools from~\cite{chenli,joyce}. A new, rather powerful approach was introduced in~\cite{tarallo} and successfully applied to regular real linear maps $\hh\to\hh$. The author thought of them as real differentials $df_p$ of Fueter-regular functions $f$ and proved partial results for these functions, too. The same work~\cite{tarallo} related regularity of real linear maps to a suitable notion of quaternionic linearity, introducing the concept of \emph{size} (recalled in the forthcoming Subsection~\ref{subsec:regularlinearmaps}). The aim of the present work is to achieve a complete classification of right Fueter-regular functions, after completely classifying regular real linear maps in terms of size.

The paper is structured as follows. Section~\ref{sec:preliminaries} contains preliminary material about real quaternions, quaternionic spaces, linear maps between quaternionic spaces, regularity and conformality in one quaternionic variable. Section~\ref{sec:regularlinearmaps} presents some new results about regularity, quaternionic linearity and complex linearity of real linear maps. After some constructions and results valid for general real linear maps, Theorem~\ref{thm:lineargg} is a first classification of regular real linear maps in terms of size, covering only a specific type of complex linearity. Then a complete classification of regular real linear maps is provided in Theorem~\ref{thm:classificationlinear}. Section~\ref{sec:regularfunctions} focuses mostly on the classification of right Fueter-regular functions, achieved in Theorem~\ref{thm:classificationfueter}, but also provides a simple differential criterion to establish holomorphy, Theorem~\ref{thm:differentialcriterionholomorphy}. The final Appendix is devoted to proving two theorems, whose proofs are too technical to be interspersed within Sections~\ref{sec:regularlinearmaps} and~\ref{sec:regularfunctions}.

%%%%%%%%%%%%%%%%%%%%%%%

\section{Preliminaries}\label{sec:preliminaries}
%%%
\subsection{Real quaternions and quaternionic spaces}\label{subsec:quaternions}

The symbols $\cc,\hh$ will denote the real $^*$-algebras of complex numbers and quaternions, respectively. Starting out from the real field $\rr$, they arise from so-called Cayley-Dickson construction (see~\cite{ebbinghaus,libroward}):
\begin{itemize}
\item $\cc=\rr+i\rr$, $(\alpha+i\beta)(\gamma+i\delta)=\alpha\gamma-\beta\delta+i(\alpha\delta+\beta\gamma)$, $\overline{\alpha+i\beta}=\alpha-i\beta\ \forall\,\alpha,\beta,\gamma,\delta\in\rr$,
\item $\hh=\cc+j\cc$, $(\alpha+j\beta)(\gamma+j\delta)=\alpha\gamma-\bar\beta\delta+j(\bar\alpha\delta+\beta\gamma)$, $\overline{\alpha+j\beta}=\bar\alpha-j\beta\ \forall\,\alpha,\beta,\gamma,\delta\in\cc$.
\end{itemize}
Each of them is a real \emph{algebra} (a real vector space with a bilinear multiplicative operation) and is \emph{unitary} (has a multiplicative neutral element $1$). In either algebra, the subalgebra generated by $1$ is identified with $\rr$. Famously, $\hh$ is associative but not commutative: its \emph{center} $\{r\in\hh\, |\, rx=xr\ \forall\,x\in \hh\}$ is $\rr$.
The Cayley-Dickson construction also endows each of $\cc,\hh$ with a \emph{$^*$-algebra} structure: $cj: x\mapsto\bar x$ is a \emph{$^*$-involution} (it is a real linear transformation and fulfills the properties $\overline{(\bar x)}=x,\overline{xy}=\bar y\bar x$ for every $x,y$ and $\bar x=x$ for every $x \in \rr$). For each $x\in\hh$, the \emph{real part} $\re(x)$ and the \emph{imaginary part} $\im(x)$ are defined as $\re(x):=(x+\bar x)/2$ and $\im(x):=x-\re(x)=(x-\bar x)/2$. In $\hh\simeq\rr^4$,  the expression $\re(x\bar y)$ coincides with the standard scalar product $\langle x,y\rangle$. The Euclidean norm $\Vert x\Vert=\sqrt{x\bar x}$ is also called the \emph{modulus} of $x$ and denoted by $|x|$. The equality $|xy| = |x|\,|y|$ holds.

It is well known that every element $x$ of $\cc^*:=\cc\setminus\{0\}$ or $\hh^*:=\hh\setminus\{0\}$ has a multiplicative inverse, namely $x^{-1} = |x|^{-1} \bar x = \bar x\, |x|^{-1}$. For all $x,y\neq 0$, the equality $(xy)^{-1} = y^{-1}x^{-1}$ holds. Thus, each of the algebras $\cc$ and $\hh$ is a division algebra and has no proper zero divisors. 
A celebrated theorem by Frobenius states that $\rr,\cc,\hh$ are the only (finite-dimensional) associative division algebras. The Skolem-Noether theorem establishes that all (real algebra) automorphisms of $\hh$ are inner: $\aut(\hh)=\{\vartheta_p : p\in\hh^*\}$, where
\[\vartheta_p:\hh\to\hh\quad x\mapsto pxp^{-1}\,.\]
The automorphism $\vartheta_p$ is the identity map $id:\hh\to\hh$ if, and only if, $p\in\rr$. When restricted to $\im(\hh)\simeq\rr^3$, $\vartheta_p$ is a rotation about $\im(p)$. The rotation angle equals $\pi$ exactly when $p\in\im(\hh)$, a case when $\vartheta_p$ is called an \emph{imaginary automorphism}. The antiautomorphisms of $\hh$ are obtained by composing the automorphisms with conjugation $cj:\hh\to\hh, x\mapsto\bar x$: indeed, $\antiaut(\hh)=\{\bar\vartheta_p : p\in\hh^*\}$, where $\bar\vartheta_p(x)=\overline{pxp^{-1}}=p\bar xp^{-1}=\vartheta_p(\bar x)$ for all $p\in\hh^*$ and all $x\in\hh$. We point out that $\vartheta_p\circ\vartheta_q=\vartheta_{pq}=\bar\vartheta_p\circ\bar\vartheta_q$ and $\vartheta_p\circ\bar\vartheta_q=\bar\vartheta_{pq}=\bar\vartheta_p\circ\vartheta_q$ for all $p,q\in\hh^*$.

We adopt the standard notation for the $2$-sphere of quaternionic \emph{imaginary units}
\begin{equation} \label{eq:s_A}
\s=\s^2:=\{x \in \hh\, | \, \re(x)=0, |x|=1\} = \{x \in \hh \, |\, x^2=-1\}\,.
\end{equation}
The $^*$-subalgebra generated by any $g \in \s$, i.e., $\cc_g=\mathrm{span}(1,g)$, is $^*$-isomorphic to the complex field $\cc$. The equality
\begin{equation} \label{eq:slice}
\bigcup_{g \in \s}\cc_g = \hh
\end{equation}
holds true, while $\cc_g \cap \cc_h=\rr$ for every $g,h \in \s$ with $g \neq \pm h$. As a consequence, every element $x$ of $\hh \setminus \rr$ can be expressed as follows: $x=\alpha+\beta g$, where $\alpha \in \rr$ is uniquely determined by $x$, while $\beta \in \rr$ and $g \in \s$ are uniquely determined by $x$, but only up to sign. If $x \in \rr$, then $\alpha=x$, $\beta=0$ and $g$ can be chosen arbitrarily in $\s$. The $2$-sphere (or singleton) $\alpha+\beta\s$ is the orbit of $x$ both under the action of $\aut(\hh)$ and under the action of $\antiaut(\hh)$.

In this paper, we will often work with ordered bases $\B=(e_0,e_1,e_2,e_3)$ of $\hh$. When $\B$ is orthonormal and $e_0$ is the quaternion $1$, then: either $e_3=e_1e_2$ and $\B$ is \emph{positively oriented}; or $e_3=e_2e_1$ and $\B$ is \emph{negatively oriented}. Choosing a positively oriented orthonormal $\B=(e_0,e_1,e_2,e_3)$ with $e_0=1$ is the same as choosing an identification between $\cc^2$ and $\cc_{e_1}+\cc_{e_1}e_2=\hh$.

We consider on $\cc$ and $\hh$ the natural Euclidean topology and differential structure as a finite-dimensional real vector space. Similarly, we will adopt on $\cc$ and $\hh$ the natural structure of real analytic manifold. The relative topology on each $\cc_g$ with $g \in \s$ clearly agrees with the topology determined by the natural identification between $\cc$ and $\cc_g$.

A \emph{left quaternionic space} is a left module $X$ over the unital ring $\hh$: namely, a set $X$ with an addition and a left multiplication by quaternions such that, for all $x,x'\in X$ and all $p,q\in\hh$, the equalites $q(x+x')=qx+qx', (p+q)x=px+qx, p(qx)=(pq)x, 1x=x$ all hold true. Of course such an $X$ is also a real vector space. Similarly, a \emph{right quaternionic space} is a right module $X$ over the unital ring $\hh$. If $X$ is both a left and a right quaternionic space, it is called a \emph{bilateral quaternionic space}. For every $n\in\nn$ with $n\geq1$, $X=\hh^n$ is an example of bilateral quaternionic space. We will sometimes write $\hh^n_l$ for $\hh^n$ endowed with the natural left multiplication by quaternions and $\hh^n_r$ for $\hh^n$ endowed with the natural right multiplication by quaternions.

%%%
\subsection{Linear maps between quaternionic spaces}\label{subsec:linear}

Let $X,X'$ be given left quaternionic spaces and $Y,Y'$ be given right quaternionic spaces. We will survey all notions of linearity for maps $X\to X'$ or $X\to Y$ considered in~\cite[\S3]{tarallo} and provide a few glimpses of maps $Y\to X$ or $Y\to Y'$. These last two cases are covered in full detail in cited text~\cite[\S3]{tarallo}.

The left quaternionic space of real linear maps $X\to X'$ is denoted by $\lin^\rr(X,X')$, or by $\lin^\rr(X)$ when $X'=X$. Similarly: the right quaternionic space of real linear maps $X\to Y$ is denoted by $\lin^\rr(X,Y)$; the left quaternionic space of real linear maps $Y\to X$ is denoted by $\lin^\rr(Y,X)$; and the right quaternionic space of real linear maps $Y\to Y'$ is denoted by $\lin^\rr(Y,Y')$, or by $\lin^\rr(Y)$ when $Y'=Y$. All of these spaces are, of course, also real vector spaces. In the special case when $X=Y$ is a bilateral quaternionic space, then $\lin^\rr(X,Y)=\lin^\rr(Y,X)$ is a bilateral quaternionic space denoted by $\lin^\rr(X)$.

Fix $g,h\in\s$. An element $\Gamma\in\lin^\rr(X,X')$ is called \emph{complex linear of type $(g,h)$} if $\Gamma(gx)=h\Gamma(x)$ for all $x\in X$. The real vector space of complex linear maps of type $(g,h)$ from $X$ to $X'$ is denoted by $\lin^\cc_{gh}(X,X')$. Similarly, an element $\Lambda\in\lin^\rr(X,Y)$ is called \emph{complex linear of type $(g,h)$} if
\[\Lambda(gx)=\Lambda(x)h\]
for all $x\in X$. The real vector space of complex linear maps of type $(g,h)$ from $X$ to $Y$ is denoted by $\lin^\cc_{gh}(X,Y)$. The  following properties are well known for all $g,h\in\s$:
\begin{itemize}
\item $\lin^\cc_{gh}(X,X')=\lin^\cc_{(-g)(-h)}(X,X')$;
\item $\lin^\cc_{gh}(X,Y)=\lin^\cc_{(-g)(-h)}(X,Y)$;
\item $\lin^\cc_{gh}(X,X')\cap\lin^\cc_{g'h'}(X,X')\neq\{0\} \Rightarrow \langle g,g'\rangle=\langle h,h'\rangle$;
\item $\lin^\cc_{gh}(X,Y)\cap\lin^\cc_{g'h'}(X,Y)\neq\{0\} \Rightarrow \langle g,g'\rangle=\langle h,h'\rangle$;
\item $\Gamma\in\lin^\cc_{gg'}(X,X'),\Lambda\in\lin^\cc_{g'h}(X',Y) \Rightarrow \Lambda\circ\Gamma\in\lin^\cc_{gh}(X,Y)$;
\item $\lin^\rr(X,X')=\lin^\cc_{gh}(X,X')\oplus\lin^\cc_{g(-h)}(X,X')$;
\item $\lin^\rr(X,Y)=\lin^\cc_{gh}(X,Y)\oplus\lin^\cc_{g(-h)}(X,Y)$;
\item $\dim_\rr(X),\dim_\rr(X')$ finite $\Rightarrow \rank(\Gamma)\in2\nn$ for all $\Gamma\in\lin^\cc_{gh}(X,X')$;
\item $\dim_\rr(X),\dim_\rr(Y)$ finite $\Rightarrow \rank(\Lambda)\in2\nn$ for all $\Lambda\in\lin^\cc_{gh}(X,Y)$;
\item $\Gamma\in\lin^\cc_{gh}(X,X'),\dim_\rr(X)=\dim_\rr(X')=\rank(\Gamma)$ finite $\Rightarrow \Gamma$ preserves orientation;
\item $\Lambda\in\lin^\cc_{gh}(X,Y),\dim_\rr(X)=\dim_\rr(Y)=\rank(\Lambda)$ finite $\Rightarrow \Lambda$ reverses orientation.
\end{itemize}

The article~\cite{tarallo} studies to which extent complex linearity admits a quaternionic analog. For any given $\varphi:\hh\to\hh$, called a \emph{reference map}, the symbol
\[\lin^\hh_\varphi(X,X')\]
denotes the real vector space of \emph{$\varphi$-linear} maps $X\to X'$, i.e., maps $\Gamma:X\to X'$ such that
\begin{equation}\label{eq:LL-linear}
\Gamma(qx)=\varphi(q)\Gamma(x)
\end{equation}
for all $x\in X$ and all $q\in\hh$. Again, $\lin^\hh_\varphi(X)$ stands for $\lin^\hh_\varphi(X,X')$ when $X'=X$. This is a rather broad generalization of the usual notion of \emph{quaternionic linear} map, which is subsumed as the special case when the reference map $\varphi$ is the identity map $id:\hh\to\hh$. In other words, $\lin^\hh_{id}(X,X')$ is the real vector space of quaternionic linear maps $X\to X'$. Similarly, the symbol $\lin^\hh_\varphi(X,Y)$ denotes the real vector space of \emph{$\varphi$-linear} maps $X\to Y$, i.e., maps $\Lambda:X\to Y$ such that $\Lambda(qx)=\Lambda(x)\varphi(q)$ for all $x\in X$ and all $q\in\hh$. Again, choosing $\varphi$ to be conjugation $cj$ allows to recover the usual real vector space of \emph{quaternionic antilinear} maps $X\to Y$ as $\lin^\hh_{cj}(X,Y)$. Finally: the symbol $\lin^\hh_\varphi(Y,Y')$ denotes the real vector space of \emph{$\varphi$-linear} maps $Y\to Y'$, i.e., maps $\Gamma:Y\to Y'$ such that $\Gamma(yq)=\Gamma(y)\varphi(q)$ for all $y\in Y$ and all $q\in\hh$; the symbol $\lin^\hh_\varphi(Y)$ stands for $\lin^\hh_\varphi(Y,Y')$ when $Y'=Y$; the symbol $\lin^\hh_\varphi(Y,X)$ denotes the real vector space of \emph{$\varphi$-linear} maps $Y\to X$, i.e., maps $\Lambda:Y\to X$ such that $\Lambda(yq)=\varphi(q)\Lambda(y)$ for all $y\in Y$ and all $q\in\hh$.

The cited~\cite{tarallo} proves the following properties, valid for any choice of reference maps $\vartheta,\vartheta',\varphi,\varphi'$, of imaginary units $g,g',h,h'\in\s$ and of quaternions $p,q\in\hh^*$ that are linearly independent over $\rr$:
\begin{itemize}
\item $\lin^\hh_\vartheta(X,X')\neq\{0\}\Leftrightarrow\vartheta\in\aut(\hh)\Leftrightarrow\lin^\hh_\vartheta(Y,Y')\neq\{0\}$;
\item $\lin^\hh_\varphi(X,Y)\neq\{0\}\Leftrightarrow\varphi\in\antiaut(\hh)\Leftrightarrow\lin^\hh_\varphi(Y,X)\neq\{0\}$;
\item $\vartheta\neq\vartheta'\Rightarrow\lin^\hh_\vartheta(X,X')\cap\lin^\hh_{\vartheta'}(X,X')=\{0\}=\lin^\hh_\vartheta(Y,Y')\cap\lin^\hh_{\vartheta'}(Y,Y')$;
\item $\varphi\neq\varphi'\Rightarrow\lin^\hh_\varphi(X,Y)\cap\lin^\hh_{\varphi'}(X,Y)=\{0\}=\lin^\hh_\varphi(Y,X)\cap\lin^\hh_{\varphi'}(Y,X)$;
\item $\Gamma\in\lin^\hh_\vartheta(X,X'),\Lambda\in\lin^\hh_\varphi(X',Y)\Rightarrow \Lambda\circ\Gamma\in\lin^\hh_{\varphi\circ\vartheta}(X,Y)$;
\item $\Lambda\in\lin^\hh_\varphi(X,Y),\Gamma\in\lin^\hh_\vartheta(Y,Y')\Rightarrow \Gamma\circ\Lambda\in\lin^\hh_{\vartheta\circ\varphi}(X,Y')$;
\item $\Lambda\in\lin^\hh_\varphi(X,Y),\Lambda'\in\lin^\hh_{\varphi'}(Y,X')\Rightarrow \Lambda'\circ\Lambda\in\lin^\hh_{\varphi'\circ\varphi}(X,X')$;
\item $\lin^\hh_\vartheta(X,X')$ is a real vector subspace of $\lin^\rr(X,X')$; moreover, if $\vartheta\in\aut(\hh)$, then $\lin^\hh_\vartheta(X,X')$ is a real vector subspace of $\lin^\cc_{g\,\vartheta(g)}(X,X')$;
\item $\lin^\hh_\varphi(X,Y)$ is a real vector subspace of $\lin^\rr(X,Y)$; moreover, if $\varphi\in\antiaut(\hh)$, then $\lin^\hh_\varphi(X,Y)$ is a real vector subspace of $\lin^\cc_{g\,\varphi(g)}(X,Y)$;
\item $\Gamma\in\lin^\hh_{\vartheta_p}(X,X'),\Gamma'\in\lin^\hh_{\vartheta_q}(X,X') \Rightarrow \Gamma,\Gamma',\Gamma+\Gamma'\in\lin^\cc_{gh}(X,X')$ with $g=\im\left(\frac{q^{-1}p}{|q^{-1}p|}\right)$, $h=\vartheta_p(g)=\vartheta_q(g)$; in the special case when $p,q$ are mutually orthogonal elements of $\im(\hh)$, this is the same as $g=\frac{pq}{|pq|}=-h$;
\item $\Lambda\in\lin^\hh_{\bar\vartheta_p}(X,Y),\Lambda'\in\lin^\hh_{\bar\vartheta_q}(X,Y) \Rightarrow \Lambda,\Lambda',\Lambda+\Lambda'\in\lin^\cc_{gh}(X,Y)$ with $g=\im\left(\frac{q^{-1}p}{|q^{-1}p|}\right)$, $h=\bar\vartheta_p(g)=\bar\vartheta_q(g)$;  in the special case when $p,q$ are mutually orthogonal elements of $\im(\hh)$, this is the same as $g=\frac{pq}{|pq|}=h$;
\item if $g'\neq\pm g$ and $\Gamma\in\lin^\cc_{gh}(X,X')\cap\lin^\cc_{g'h'}(X,X')$, then $\Gamma\in\lin^\hh_\vartheta(X,X')$ where $\vartheta\in\aut(\hh)$ maps each element of the basis $(1,g,g',g\times g')$ into the corresponding element of the basis $(1,h,h',h\times h')$; in the special case when $h=-g,h'=-g'$, this is the same as $\vartheta=\vartheta_{g\times g'}$;
\item if $g'\neq\pm g$ and $\Lambda\in\lin^\cc_{gh}(X,Y)\cap\lin^\cc_{g'h'}(X,Y)$, then $\Lambda\in\lin^\hh_\varphi(X,Y)$ where $\varphi\in\antiaut(\hh)$ maps each element of the basis $(1,g,g',g\times g')$ into the corresponding element of the basis $(1,h,h',-h\times h')$; in the special case when $h=g,h'=g'$, this is the same as $\varphi=\bar\vartheta_{g\times g'}$.
\end{itemize}
For the bilateral quaternionic space $\hh$, the following facts are established in~\cite[Lemma 3.4]{tarallo}:
\begin{itemize}
\item $\vartheta\in\aut(\hh)\Rightarrow\lin^\hh_\vartheta(X,\hh)$ is a subspace of the right quaternionic space $\lin^\rr(X,\hh)$;
\item $\varphi\in\antiaut(\hh)\Rightarrow\lin^\hh_\varphi(X,\hh)$ is a subspace of the left quaternionic space $\lin^\rr(X,\hh)$;
\item $\vartheta\in\aut(\hh)\Rightarrow\lin^\hh_\vartheta(Y,\hh)$ is a subspace of the left quaternionic space $\lin^\rr(Y,\hh)$;
\item $\varphi\in\antiaut(\hh)\Rightarrow\lin^\hh_\varphi(Y,\hh)$ is a subspace of the right quaternionic space $\lin^\rr(Y,\hh)$.
\end{itemize}
This is because: equality \eqref{eq:LL-linear} is preserved when $\Gamma$ is replaced by $\Gamma p$ with $p\in\hh$; the analogous equality for $\Lambda:X\to\hh_r$ is preserved when $\Lambda$ is replaced by $p\Lambda$ with $p\in\hh$; the analogous equality for $\Gamma:Y\to\hh_r$ is preserved when $\Gamma$ is replaced by $p\Gamma$ with $p\in\hh$; and the analogous equality for $\Lambda:Y\to\hh_l$ is preserved when $\Lambda$ is replaced by $\Lambda p$ with $p\in\hh$. In the special case when also $X$ (or $Y$, respectively) is $\hh$, then $\lin^\rr(\hh)$ is a bilateral quaternionic space and the reference~\cite[\S3]{tarallo} also proves the following facts:
\begin{itemize}
\item $\vartheta\in\aut(\hh)\Rightarrow\lin^\hh_\vartheta(\hh_l)=\{\vartheta\,\gamma:\gamma\in\hh\}$ and $\lin^\hh_\vartheta(\hh_r)=\{\gamma\,\vartheta:\gamma\in\hh\}$ are, respectively, a right quaternionic subspace and a left quaternionic subspace of $\lin^\rr(\hh)$;
\item $\varphi\in\antiaut(\hh)\Rightarrow\lin^\hh_\varphi(\hh_l,\hh_r)=\{\lambda\,\varphi:\lambda\in\hh\}$ and $\lin^\hh_\varphi(\hh_r,\hh_l)=\{\varphi\,\lambda:\lambda\in\hh\}$ are, respectively, a left quaternionic subspace and a right quaternionic subspace of $\lin^\rr(\hh)$.
\end{itemize}

The \emph{standard quaternionic scalar product} on the left quaternionic space $\lin^\rr(\hh)$ is defined as follows in~\cite[\S2]{tarallo}. After fixing an arbitrary real orthornomal basis $\B=(e_0,e_1,e_2,e_3)$ of $\hh$, one sets
\[(\Gamma,\Lambda)_l:=\frac14\sum_{\ell=0}^3\Gamma(e_\ell)\overline{\Lambda(e_\ell)}\]
for all $\Gamma,\Lambda\in\lin^\rr(\hh)$. A direct computation shows that the result does not depend on the choice of the basis $\B$. Similarly, the \emph{standard quaternionic scalar product} on the right quaternionic space $\lin^\rr(\hh)$ is defined by the formula $(\Gamma,\Lambda)_r:=\frac14\sum_{\ell=0}^3\overline{\Gamma(e_\ell)}\Lambda(e_\ell)$ for all $\Gamma,\Lambda\in\lin^\rr(\hh)$. After defining the standard real scalar product of $\Gamma,\Lambda\in\lin^\rr(\hh)$ as $\langle\Gamma,\Lambda\rangle:=\frac14\sum_{\ell=0}^3\langle\Gamma(e_\ell),\Lambda(e_\ell)\rangle$, the cited~\cite[\S2]{tarallo} proves the following properties:
\begin{itemize}
\item for all $\Gamma,\Lambda\in\lin^\rr(\hh)$ and for all $p,q\in\hh$, the equalities $\re(\Gamma,\Lambda)_l=\langle\Gamma,\Lambda\rangle=\re(\Gamma,\Lambda)_r$, $(p\Gamma,q\Lambda)_l=p(\Gamma,\Lambda)_l\bar q$ and $(\Gamma p,\Lambda q)_r=\bar p(\Gamma,\Lambda)_r q$ hold true;
\item for all $p,q\in\hh^*$, the equalities $(\vartheta_p,\vartheta_q)_r=(\bar\vartheta_p,\bar\vartheta_q)_l=\re(\bar uv)u\bar v=\langle u,v\rangle u\bar v$ hold true with $u:=\frac{p}{|p|}, v:=\frac{q}{|q|}$; in particular, $(\vartheta_p,\vartheta_p)_r=(\bar\vartheta_p,\bar\vartheta_p)_l=1$ and the quaternion $(\vartheta_p,\vartheta_q)_r=(\bar\vartheta_p,\bar\vartheta_q)_l$ is $0$ if, and only if, $p \perp q$;
\item if $(\vartheta,\vartheta')_r=0$, then all $\Gamma\in\lin^\hh_\vartheta(\hh_l),\Gamma'\in\lin^\hh_{\vartheta'}(\hh_l)$ fulfill the equality $(\Gamma,\Gamma')_r=0$;
\item if $(\varphi,\varphi')_l=0$, then all $\Lambda\in\lin^\hh_\varphi(\hh_l,\hh_r),\Lambda'\in\lin^\hh_{\varphi'}(\hh_l,\hh_r)$ fulfill the equality $(\Lambda,\Lambda')_l=0$.
\end{itemize}
Overall, see~\cite[\S4]{tarallo}, every basis $\B=(e_0,e_1,e_2,e_3)$ of $\hh$ yields four decompositions of $\lin^\rr(\hh)$ into direct sums, which are orthogonal sums if the basis $\B$ is orthogonal:
\begin{itemize}
\item $\lin^\rr(\hh)=\lin^\hh_{\vartheta_{e_0}}(\hh_l)\oplus\lin^\hh_{\vartheta_{e_1}}(\hh_l)\oplus\lin^\hh_{\vartheta_{e_2}}(\hh_l)\oplus\lin^\hh_{\vartheta_{e_3}}(\hh_l)$;
\item $\lin^\rr(\hh)=\lin^\hh_{\bar\vartheta_{e_0}}(\hh_l,\hh_r)\oplus\lin^\hh_{\bar\vartheta_{e_1}}(\hh_l,\hh_r)\oplus\lin^\hh_{\bar\vartheta_{e_2}}(\hh_l,\hh_r)\oplus\lin^\hh_{\bar\vartheta_{e_3}}(\hh_l,\hh_r)$;
\item $\lin^\rr(\hh)=\lin^\hh_{\vartheta_{e_0}}(\hh_r)\oplus\lin^\hh_{\vartheta_{e_1}}(\hh_r)\oplus\lin^\hh_{\vartheta_{e_2}}(\hh_r)\oplus\lin^\hh_{\vartheta_{e_3}}(\hh_r)$;
\item $\lin^\rr(\hh)=\lin^\hh_{\bar\vartheta_{e_0}}(\hh_r,\hh_l)\oplus\lin^\hh_{\bar\vartheta_{e_1}}(\hh_r,\hh_l)\oplus\lin^\hh_{\bar\vartheta_{e_2}}(\hh_r,\hh_l)\oplus\lin^\hh_{\bar\vartheta_{e_3}}(\hh_r,\hh_l)$.
\end{itemize}
If $e_0=1$, then $\lin^\hh_{\vartheta_{e_0}}(\hh_l)=\lin^\hh_{id}(\hh_l)$ is the space of quaternionic linear self-maps of $\hh_l$, $\lin^\hh_{\bar\vartheta_{e_0}}(\hh_l,\hh_r)=\lin^\hh_{cj}(\hh_l,\hh_r)$ is the space of quaternionic antilinear maps $\hh_l\to\hh_r$ and so forth. For a single map $\Lambda\in\lin^\rr(\hh)$, we find four decompositions:
\begin{itemize}
\item there exist unique $\gamma_0,\gamma_1,\gamma_2,\gamma_3\in\hh$ such that $\Lambda=\sum_{\ell=0}^3\vartheta_{e_\ell}\gamma_\ell$; if, moreover, the basis $\B$ is orthogonal, then the summands in the last sum are mutually orthogonal and $\gamma_\ell=(\vartheta_{e_\ell},\Lambda)_r$;
\item there exist unique $\lambda_0,\lambda_1,\lambda_2,\lambda_3\in\hh$ such that $\Lambda=\sum_{\ell=0}^3\lambda_\ell\bar\vartheta_{e_\ell}$; if, moreover, the basis $\B$ is orthogonal, then the summands in the last sum are mutually orthogonal and $\lambda_\ell=(\Lambda,\bar\vartheta_{e_\ell})_l$;
\item there exist unique $\gamma_0,\gamma_1,\gamma_2,\gamma_3\in\hh$ such that $\Lambda=\sum_{\ell=0}^3\gamma_\ell\vartheta_{e_\ell}$; if, moreover, the basis $\B$ is orthogonal, then the summands in the last sum are mutually orthogonal and $\gamma_\ell=(\Lambda,\vartheta_{e_\ell})_l$;
\item there exist unique $\lambda_0,\lambda_1,\lambda_2,\lambda_3\in\hh$ such that $\Lambda=\sum_{\ell=0}^3\bar\vartheta_{e_\ell}\lambda_\ell$; if, moreover, the basis $\B$ is orthogonal, then the summands in the last sum are mutually orthogonal and $\lambda_\ell=(\bar\vartheta_{e_\ell},\Lambda)_r$.
\end{itemize}

\begin{example}\label{ex:fourtimesrealpart}
Consider the real linear map $\re:\hh\to\rr\subset\hh$. If $\B=(e_0,e_1,e_2,e_3)$ is an orthonormal basis of $\hh$ with $e_0=1$, then $\re=\sum_{\ell=0}^3\frac14\vartheta_{e_\ell}=\sum_{\ell=0}^3\frac14\bar\vartheta_{e_\ell}$ as a consequence of the following equalities:
\begin{align*}
\sum_{\ell=0}^3\vartheta_{e_\ell}(e_0) &=\sum_{\ell=0}^3 e_\ell \bar e_\ell=1+1+1+1=4\,,\\
\sum_{\ell=0}^3\vartheta_{e_\ell}(e_1) &= e_1-e_1e_1e_1-e_2e_1e_2-e_3e_1e_3=e_1+e_1-e_1-e_1=0\,,\\
\sum_{\ell=0}^3\vartheta_{e_\ell}(e_2) &= e_2-e_1e_2e_1-e_2e_2e_2-e_3e_2e_3=e_2-e_2+e_2-e_2=0\,,\\
\sum_{\ell=0}^3\vartheta_{e_\ell}(e_3) &= e_3-e_1e_3e_1-e_2e_3e_2-e_3e_3e_3=e_3-e_3-e_3+e_3=0\,.\qedhere
\end{align*}
\end{example}

%%%
\subsection{Regular real linear maps}\label{subsec:regularlinearmaps}

The work~\cite[\S4]{tarallo} singles out the subspace $\lin^\rr_{\mathcal{RF}}(\hh):=\{\Lambda\in\lin^\rr(\hh):(\Lambda,cj)_l=0\}$ and calls its elements \emph{regular} real linear self-maps of $\hh_l$. The reason for this notation and this terminology will become clear in the forthcoming Subsection~\ref{subsec:regularfunctions}. If we fix a basis $(1,e_1,e_2,e_3)$ of $\hh$ with $e_1,e_2,e_3\in\im(\hh)$, then $\lin^\rr_{\mathcal{RF}}(\hh)=\lin^\hh_{\bar\vartheta_{e_1}}(\hh_l,\hh_r)\oplus\lin^\hh_{\bar\vartheta_{e_2}}(\hh_l,\hh_r)\oplus\lin^\hh_{\bar\vartheta_{e_3}}(\hh_l,\hh_r)$. In other words, every regular real linear map $\Lambda:\hh_l\to\hh_l$ can be expressed as $\Lambda=\sum_{\ell=1}^3\lambda_\ell\bar\vartheta_{e_\ell}$ for some $\lambda_1,\lambda_2,\lambda_3\in\hh$. In the same work~\cite[\S4]{tarallo}, the quaternionic \emph{size} $s(\Lambda)$ of $\Lambda$ is defined as the minimal number of nonzero $\lambda_i$'s when $e_1,e_2,e_3$ are chosen to be mutually orthogonal elements of $\im(\hh)$.

\begin{example}\label{ex:size0}
The only $\Lambda\in\lin^\rr_{\mathcal{RF}}(\hh)$ having size $s(\Lambda)=0$ is $\Lambda\equiv0$.
\end{example}

\begin{example}\label{ex:size1}
A simple example of element of $\lin^\rr_{\mathcal{RF}}(\hh)$ having size $1$ is
\[\bar\vartheta_i(z_1+z_2j)=i(\bar z_1-j\bar z_2)(-i)=\bar z_1+j\bar z_2\,,\]
which belongs to $\lin^\hh_{\bar\vartheta_i}(\hh_l,\hh_r)$. Similarly, $\bar\vartheta_j(z_1+z_2j)=j(\bar z_1-j\bar z_2)(-j)=z_1-jz_2$ (belonging to $\lin^\hh_{\bar\vartheta_j}(\hh_l,\hh_r)$) and $\bar\vartheta_k(z_1+z_2j)=k(\bar z_1-j\bar z_2)(-k)=z_1+jz_2$ (belonging to $\lin^\hh_{\bar\vartheta_k}(\hh_l,\hh_r)$) have size $1$. In general, if $g\in\s$ then every element of $\lin^\hh_{\bar\vartheta_g}(\hh_l,\hh_r)$ is an element of $\lin^\rr_{\mathcal{RF}}(\hh)$ with size $1$.
\end{example}

\begin{example}\label{ex:size2}
A simple example of element of $\lin^\rr_{\mathcal{RF}}(\hh)$ having size $2$ is the Fueter variable $\zeta_1=x_1-ix_0$, i.e., the real linear map $\Lambda_i$ defined by the formula
\[\Lambda_i(z_1+z_2j)=-iz_1\,.\]
Indeed: $\Lambda_i$ is regular and $s(\Lambda_i)\leq2$ because $\Lambda_i=-\frac{i}2\bar\vartheta_j-\frac{i}2\bar\vartheta_k$. Moreover, $\Lambda_i$ cannot be expressed as $\lambda\,\bar\vartheta_g$ for any $g\in\im(\hh)^*,\lambda\in\hh$, because  $\Lambda_i$ is not injective. Thus, the size $s(\Lambda_i)$ is exactly $2$.

Similarly, the Fueter variables $\zeta_2=x_2-jx_0$ (i.e., the real linear map $\Lambda_j(z_1+z_2j)=\frac12(z_2+\bar z_2)-\frac{j}2(z_1+\bar z_1)$) and $\zeta_3=x_3-kx_0$ (i.e., the real linear map $\Lambda_k$ defined by the formula $\Lambda_k(z_1+z_2j)=-\frac{i}2(z_2-\bar z_2)-\frac{k}2(z_1+\bar z_1)$) are elements of $\lin^\rr_{\mathcal{RF}}(\hh)$ having size $2$.
\end{example}

\begin{example}\label{ex:size3}
The map $\Lambda(z_1+z_2j)=2z_1+\bar z_1+j\bar z_2$ belongs to $\lin^\rr_{\mathcal{RF}}(\hh)$. It has $s(\Lambda)=3$ because not only $\Lambda=\bar\vartheta_i+\bar\vartheta_j+\bar\vartheta_k$, but the decomposition of $\Lambda$ with respect to any orthogonal basis $(e_0,e_1,e_2,e_3)$ of $\hh$ having $e_0=1$ is $\Lambda=\bar\vartheta_{e_1}+\bar\vartheta_{e_2}+\bar\vartheta_{e_3}$. Indeed, $\Lambda=\sum_{\ell=0}^3\lambda_\ell\bar\vartheta_{e_\ell}$, where $\lambda_0=(\Lambda,cj)_l=0$ while, for all $\ell\in\{1,2,3\}$,
\[\lambda_\ell=(\Lambda,\bar\vartheta_{e_\ell})_l=(\bar\vartheta_i,\bar\vartheta_{e_\ell})_l+(\bar\vartheta_j,\bar\vartheta_{e_\ell})_l+(\bar\vartheta_k,\bar\vartheta_{e_\ell})_l=1\,.\]
\end{example}

The reference~\cite[\S5]{tarallo} proves the following properties, valid for all $\Lambda\in\lin^\rr(\hh)$:
\begin{itemize}
\item there exists $g\in\s$ such that $\Lambda\in\lin^\cc_{gg}(\hh_l,\hh_r)$ if, and only if, $\Lambda$ is an element of $\lin^\rr_{\mathcal{RF}}(\hh)$ of size $s(\Lambda)\leq2$;
\item if $\Lambda\in\lin^\rr_{\mathcal{RF}}(\hh)$ and there exists $g,h\in\s$ with $h\neq g$ such that $\Lambda\in\lin^\cc_{gh}(\hh_l,\hh_r)$, then $\Lambda$ has size $s(\Lambda)\leq1$.
\end{itemize}
The forthcoming Theorem~\ref{thm:classificationlinear} will provide a more precise classification in terms of size and subsume the last two properties listed.

Similarly,~\cite[\S4]{tarallo} considers the space $\lin^\rr_{\mathcal{LF}}(\hh):=\{\Lambda\in\lin^\rr(\hh):(cj,\Lambda)_r=0\}$ and calls its elements \emph{regular} real linear self-maps of $\hh_r$.  Again, if we fix a basis $(1,e_1,e_2,e_3)$ of $\hh$ with $e_1,e_2,e_3\in\im(\hh)$, then $\lin^\rr_{\mathcal{LF}}(\hh)=\lin^\hh_{\bar\vartheta_{e_1}}(\hh_r,\hh_l)\oplus\lin^\hh_{\bar\vartheta_{e_2}}(\hh_r,\hh_l)\oplus\lin^\hh_{\bar\vartheta_{e_3}}(\hh_r,\hh_l)$. In other words, every regular real linear map $\Lambda:\hh_r\to\hh_r$ can be expressed as $\Lambda=\sum_{\ell=1}^3\bar\vartheta_{e_\ell}\lambda_\ell$ for some $\lambda_1,\lambda_2,\lambda_3\in\hh$.

%%%
\subsection{Holomorphic functions and regular functions}\label{subsec:regularfunctions}

Let us consider a connected open subset $U$ of $\hh$ and a $C^1$ function $f:U\to\hh$. At each point $p\in U$, we will think of the differential $df_p:\hh\simeq T_pU\to T_{f(p)}\hh\simeq\hh$ as an element of $\lin^\rr(\hh)$.

Accordingly, we will think of a smooth orthogonal almost-complex structure $\I$ on $U$ at a single point $p\in U$, namely $\I_p$, as left multiplication times an imaginary unit $I(p)\in\s$ in $\hh_l$ and we will think of a smooth orthogonal almost-complex structure $\J$ on an open neighborhood $V$ of $f(U)$ at a single point $q\in V$ as right multiplication times an imaginary unit $J(q)\in\s$ in $\hh_r$. Thus, $f:(U,\I)\to(V,\J)$ is holomorphic if, and only if, at each $p\in U$, $df_p$ belongs to $\lin^\cc_{I(p)J(f(p))}(\hh_l,\hh_r)$. In particular, if we use the constant orthogonal complex structures $L_g(v):=gv$ and $R_h(v):=vh$ for some $g,h\in\s$, then $f:(U,L_g)\to(V,R_h)$ is holomorphic if, and only if, at each $p\in U$, $df_p$ belongs to $\lin^\cc_{gh}(\hh_l,\hh_r)$.

The work~\cite[\S4]{tarallo} called $f$ \emph{regular} if, for all $p\in U$, the real differential $df_p$ is an element of $\lin^\rr_{\mathcal{RF}}(\hh)$, that is, $(df_p,cj)_l=0$. If we fix an orthonormal basis $\B=(e_0,e_1,e_2,e_3)$ of $\hh$, then $(df_p,cj)_l=\frac14\sum_{\ell=0}^3df_p(e_\ell)e_\ell=\frac12\bar\partial_\B f(p)$, where
\[\bar\partial_\B f:=\frac12\sum_{\ell=0}^3\frac{\partial f}{\partial x_\ell}e_\ell\,.\]
By its very definition (see Subsection~\ref{subsec:regularlinearmaps}), this notion of regularity does not depend on the specific choice of $\B$. For $\B=(1,i,j,k)$, the operator $\bar\partial_\B$ is the right Cauchy-Riemann-Fueter operator $\bar\partial_r$. Thus, the notion of regularity considered here coincides with the celebrated notion of \emph{right Fueter-regular function}, recalled in Section~\ref{sec:introduction}. In particular, regular functions are harmonic and real analytic.

The work~\cite[\S4]{tarallo} also called $f$ \emph{regular} if, for all $p\in U$, the real differential $df_p$ is an element of $\lin^\rr_{\mathcal{LF}}(\hh)$, that is, $(cj,df_p)_r=0$. Now, $2(cj,df_p)_r=\frac12\sum_{\ell=0}^3e_\ell\frac{\partial f}{\partial x_\ell}(p)$ coincides, for $\B=(1,i,j,k)$, with $\bar\partial_lf(p)$, where $\bar\partial_l$ is the left Cauchy-Riemann-Fueter operator. Thus, the notion of regularity considered here coincides with the notion of \emph{left Fueter-regular function}, see Section~\ref{sec:introduction}.

If $f$ is bijective, in Section~\ref{sec:introduction}, we called $f$ RL-biregular (and $f^{-1}$ LR-biregular) if $f$ is right Fueter-regular and $f^{-1}$ is left Fueter-regular. This is equivalent to having $df_p\in\lin^\rr_{\mathcal{RF}}(\hh)$ and $(df_p)^{-1}\in\lin^\rr_{\mathcal{LF}}(\hh)$. Similar considerations apply to LL-biregular and RR-biregular functions. We will call $f$ \emph{absolutely biregular} if it is RL-biregular, LR-biregular, LL-biregular and RR-biregular.

\begin{example}
If $g\in\s$, then $\bar\vartheta_g=\bar\vartheta_g^{-1}\in\lin^\rr_{\mathcal{RF}}(\hh)\cap\lin^\rr_{\mathcal{LF}}(\hh)$. Thus, $\bar\vartheta_g$ is absolutely biregular.
\end{example}

For the sake of completeness, we point out that $C^1$ functions belonging to the kernel of the conjugate operator $\partial_r$ (or $\partial_l$, respectively), are those whose differentials are regular real linear maps $\hh_r\to\hh_l$ (or $\hh_l\to\hh_r$, respectively), in the sense defined in~\cite[\S4]{tarallo} and not treated here.

%%%
\subsection{Conformal real linear maps and functions}\label{subsec:conformal}

In this final subsection of preliminaries, let us recall a few facts about conformality in dimension four.

For $\Gamma,\Lambda\in\lin^\rr(\hh)$, the following facts are proven in~\cite[Lemma 6.2]{tarallo}.
\begin{itemize}
\item $\Gamma$ is an orientation-preserving conformal transformation of $\hh \Longleftrightarrow\Gamma\in\lin^\hh_\vartheta(\hh_l)$ for some $\vartheta\in\aut(\hh) \Longleftrightarrow \Gamma\in\lin^\hh_{\vartheta'}(\hh_r)$ for some $\vartheta'\in\aut(\hh)$. If so, then $\Gamma=\vartheta\,\gamma=\gamma\,\vartheta'$ with $\gamma:=\Gamma(1), \vartheta(x):=\Gamma(x)\gamma^{-1},\vartheta'(x):=\gamma^{-1}\Gamma(x)$ for all $x\in\hh$. Necessarily, $\vartheta'=\vartheta_{\gamma^{-1}}\circ\vartheta$.
\item $\Lambda$ is an orientation-reversing conformal transformation of $\hh \Longleftrightarrow \Lambda\in\lin^\hh_\varphi(\hh_l,\hh_r)$ for some $\varphi\in\antiaut(\hh) \Longleftrightarrow \Lambda\in\lin^\hh_{\varphi'}(\hh_r,\hh_l)$ for some $\varphi'\in\antiaut(\hh)$. If so, then $\Lambda=\lambda\,\varphi=\varphi'\lambda$ with $\lambda:=\Lambda(1),\varphi(x):=\lambda^{-1}\Lambda(x),\varphi'(x):=\Lambda(x)\lambda^{-1}$ for all $x\in\hh$. Necessarily, $\varphi'=\vartheta_\lambda\circ\varphi$.
\end{itemize}

In particular:

\begin{remark}\label{rmk:size1isconformal}
If $\Lambda\in\lin^\rr_{\mathcal{RF}}(\hh)$ has $s(\Lambda)=1$, then $\Lambda$ is an orientation-reversing and conformal real linear transformation of $\hh$.
\end{remark}

Obviously, one can construct conformal functions $\hh\to\hh$ by composing quaternionic translations with conformal real linear maps $\hh\to\hh$. Thus:
\begin{itemize}
\item all orientation-preserving and conformal real affine transformations of $\hh$ can be obtained as $\vartheta_p\,\gamma+\mu = \gamma\,\vartheta_q+\mu$ for $\gamma,p,q\in\hh^*$ (with $p=\gamma q$) and $\mu\in\hh$;
\item all orientation-reversing and conformal real affine transformations of $\hh$ can be obtained as $\lambda\,\bar\vartheta_q+\mu=\bar\vartheta_p\,\lambda+\mu$ for $\lambda,p,q\in\hh^*$ (with $\lambda q=p$) and $\mu\in\hh$.
\end{itemize}

A class of examples of conformal quaternionic functions that are not real affine is the family $\{\tau^{q_0}:q_0\in\hh\}$, where
\[\tau^{q_0}: \hh\setminus\{q_0\} \to \hh\setminus\{q_0\},\quad q\mapsto q_0+\frac{q-q_0}{|q-q_0|^2}\]
is called the \emph{inversion} with respect to $q_0$ in $\hh$. By direct inspection, $\tau^0(q)=\bar q^{-1}$ is an involutive self-map of $\hh\setminus\{0\}$ and
\[d\tau^0_q(v)=-\bar q^{-1}\bar v\bar q^{-1}=-\bar q^{-2}\bar\vartheta_{\bar q}(v)=\bar\vartheta_{(\bar q)^{-1}}(v)(-\bar q^{-2})\]
for all $q\in\hh\setminus\{0\}$ and for all $v\in T_q(\hh\setminus\{0\})\simeq\hh$. Since $\tau^{q_0}(q)=\tau^0(q-q_0)+q_0$ (whence $d\tau^{q_0}_p=d\tau^{0}_{p-q_0}$ for  all $p\in\hh\setminus\{q_0\}$), we conclude that
\begin{itemize}
\item $(\tau^{q_0})^{-1}=\tau^{q_0}$;
\item $d\tau^{q_0}_p=-(\bar p-\bar q_0)^{-2}\bar\vartheta_{\bar p-\bar q_0}$ for all $p\in\hh\setminus\{q_0\}$.
\end{itemize}
We will later need the following well-known theorem (see, e.g.,~\cite[pp. 222--226]{libroberger} or~\cite[Theorem B.1]{tarallo}).
\begin{itemize}
\item {[Liouville's Theorem]} Let $U$ be a nonempty connected open subset of $\hh$. If $f:U\to\hh$ is conformal, then it is (the restriction to $U$ of) either a real affine transformation of $\hh$ or a real affine transformation of $\hh$ precomposed with an inversion $\tau^{q_0}$ (for some $q_0\in\hh\setminus U$).
\end{itemize}

%%%%%%%%%%%%%%%%%%%%%%%

\section{New results about regularity and complex linearity of real linear maps}\label{sec:regularlinearmaps}

\subsection{General real linear maps}

In preparation for our work on right Fueter-regular functions, we need to deepen the study of regular real linear maps, see Subsection~\ref{subsec:regularlinearmaps}. We begin by further characterizing complex linear maps among real linear maps. We will repeatedly use, for $p,q\in\hh$, the notations $L_p(x):=px$ and $R_q(x):=xq$. We point out that $L_p$ (respectively, $R_q$) is: a conformal real linear transformation if, and only if, $p\neq0$ (respectively, $q\neq0$); an orthogonal real linear transformation if, and only if, $|p|=1$ (respectively, $|q|=1$). Moreover, if $g,h\in\s$ then $L_g^{-1}=-L_g, R_h^{-1}=-R_h$.

\begin{remark}\label{rmk:characterizationofcomplexlinearity}
For $\Lambda\in\lin^\rr(\hh)$ and $g,h\in\s$,
\[\Lambda\in\lin^\cc_{gh}(\hh_l,\hh_r)\Longleftrightarrow\Lambda(gx)=\Lambda(x)h\ \forall\,x\in\hh\Longleftrightarrow\Lambda\circ L_{g}=R_h\circ\Lambda\Longleftrightarrow\Lambda = -R_{h}\circ\Lambda\circ L_{g}\,.\]
\end{remark}

To better study complex linearity (and, later on, regularity), we will use the tools provided by the next definition. We recall that the standard real scalar product of $\Gamma,\Lambda\in\lin^\rr(\hh)$ has been defined as $\langle\Gamma,\Lambda\rangle:=\frac14\sum_{\ell=0}^3\langle\Gamma(e_\ell),\Lambda(e_\ell)\rangle$ for any real orthornomal basis $\B=(e_0,e_1,e_2,e_3)$ of $\hh$ and that its value is independent of the specific choice of $\B$.

\begin{definition}
To each $\Lambda\in\lin^\rr(\hh)$, we associate the bilinear forms $\A_\Lambda,\M_\Lambda:\im(\hh)\times\im(\hh)\to\rr$, defined as
\begin{align*}
\A_\Lambda(p,q)&=\langle R_p\circ\Lambda,\Lambda\circ L_q\rangle\\
\M_\Lambda(p,q)&=\frac12\tr\left(\A_\Lambda\right)\langle p,q\rangle-\frac12\A_\Lambda(p,q)\,.
\end{align*}
We define $\L_\Lambda:\im(\hh)\to\im(\hh)$ to be the real linear map such that $\A_\Lambda(p,q)=\langle p, \L_\Lambda(q)\rangle$ for all $p,q\in\im(\hh)$ and $\Q_\Lambda:\im(\hh)\to\rr$ to be the quadratic form defined by the formula $\Q_\Lambda(q)=| \L_\Lambda(q)|^2$ for $q\in\im(\hh)$.
For each orthonormal basis $\B=(e_0,e_1,e_2,e_3)$ of $\hh$ with $e_0=1$: $A=A_{\Lambda,\B}$ will denote the $3\times3$ real matrix associated to $\A_\Lambda$ with respect to $\widehat{\B}:=(e_1,e_2,e_3)$, which also represents $\L_\Lambda$ with respect to $\widehat{\B},\widehat{\B}$; while $M=M_{\Lambda,\B}$ will denote the $3\times3$ real matrix associated to $\M_\Lambda$ with respect to $\widehat{\B}$.
\end{definition}

\begin{remark}\label{rmk:quadraticform}
Let $\Lambda\in\lin^\rr(\hh)$, let $\B=(e_0,e_1,e_2,e_3)$ be an orthonormal basis of $\hh$ with $e_0=1$ and set $A:=A_{\Lambda,\B}$. Then $A^TA$ is the matrix associated to $\Q_\Lambda$ with respect to the basis $(e_1,e_2,e_3)$. Indeed, if $p=p_1e_1+p_2e_2+p_3e_3\in\s$, then
\[\Q_\Lambda(p)=| \L_\Lambda(p)|^2=\Vert AP\Vert^2=(AP)^TAP=P^TA^TA\,P,\quad P:=\begin{bmatrix} p_1\\p_2\\p_3\end{bmatrix}\,.\]
\end{remark}

The significance of $\A_\Lambda,\L_\Lambda,\Q_\Lambda$ is readily explained by the next result, which uses the notation $\Vert\Lambda\Vert:=\sqrt{\langle\Lambda,\Lambda\rangle}$.

\begin{proposition}\label{prop:holomorphy}
Let $\Lambda\in\lin^\rr(\hh)$ and assume $\Lambda\neq0$.
\begin{enumerate}
\item For any $g,h\in\s$, $\A_\Lambda(h,g)\leq\left\Vert\Lambda\right\Vert^2$ and
\[\A_\Lambda(h,g)=\left\Vert\Lambda\right\Vert^2\Longleftrightarrow\Lambda\in\lin^\cc_{gh}(\hh_l,\hh_r)\,.\]
\item For each $g\in\s$, $\Q_\Lambda(g)\leq\left\Vert\Lambda\right\Vert^4$ and
\[\Q_\Lambda(g)=\left\Vert\Lambda\right\Vert^4\Longleftrightarrow\exists\,h\in\s\mathrm{\ such\ that\ }\Lambda\in\lin^\cc_{gh}(\hh_l,\hh_r)\,.\]
Moreover, $\L_{\Lambda}(g)=\left\Vert\Lambda\right\Vert^2h$. In particular, $h$ is unique.
\item The maximum eigenvalue of $\Q_\Lambda$ equals $\left\Vert\Lambda\right\Vert^4$ if, and only if, there exist $g,h\in\s$ such that $\Lambda\in\lin^\cc_{gh}(\hh_l,\hh_r)$.
\end{enumerate}
If, instead, $\Lambda=0\in\lin^\rr(\hh)$, then for all $g,h\in\s$ we have $\A_\Lambda(h,g)=0=\Q_\Lambda(g)$ and $\Lambda\in\lin^\cc_{gh}(\hh_l,\hh_r)$.
\end{proposition}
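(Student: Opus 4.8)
The plan is to recognize $\A_\Lambda(h,g)$ as a genuine inner product between two real linear maps of equal norm, so that essentially everything reduces to two applications of the Cauchy--Schwarz inequality. First I would observe that, since $g,h\in\s$ satisfy $|g|=|h|=1$, both $L_g$ and $R_h$ are orthogonal transformations of $\hh$; hence $R_h\circ\Lambda$ and $\Lambda\circ L_g$ have the same norm as $\Lambda$, namely $\Vert R_h\circ\Lambda\Vert=\Vert\Lambda\Vert=\Vert\Lambda\circ L_g\Vert$. For $\Lambda\circ L_g$ this uses that $(ge_0,\dots,ge_3)$ is again orthonormal together with the basis-independence of $\Vert\cdot\Vert$; for $R_h\circ\Lambda$ it uses $|xh|=|x|$. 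Since by definition $\A_\Lambda(h,g)=\langle R_h\circ\Lambda,\Lambda\circ L_g\rangle$, Cauchy--Schwarz in $\lin^\rr(\hh)$ gives $\A_\Lambda(h,g)\le\Vert R_h\circ\Lambda\Vert\,\Vert\Lambda\circ L_g\Vert=\Vert\Lambda\Vert^2$, which is the inequality in (1). For the equality case I would invoke the equality clause of Cauchy--Schwarz: since the two factors have equal positive norm $\Vert\Lambda\Vert$, equality $\langle u,v\rangle=\Vert u\Vert\,\Vert v\Vert$ upgrades proportionality to the identity $u=v$, i.e. $R_h\circ\Lambda=\Lambda\circ L_g$, which by Remark~\ref{rmk:characterizationofcomplexlinearity} is precisely $\Lambda\in\lin^\cc_{gh}(\hh_l,\hh_r)$.

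For (2), the key is the defining relation $\A_\Lambda(h,g)=\langle h,\L_\Lambda(g)\rangle$. Fixing $g$ and letting $h$ range over $\s$, Cauchy--Schwarz in the Euclidean space $(\im(\hh),\langle\cdot,\cdot\rangle)$ yields $\A_\Lambda(h,g)=\langle h,\L_\Lambda(g)\rangle\le|\L_\Lambda(g)|=\sqrt{\Q_\Lambda(g)}$, with equality exactly when $h=\L_\Lambda(g)/|\L_\Lambda(g)|$ (provided $\L_\Lambda(g)\ne0$). Combining this with the bound from (1) and taking the supremum over $h\in\s$ gives $\sqrt{\Q_\Lambda(g)}\le\Vert\Lambda\Vert^2$, i.e. $\Q_\Lambda(g)\le\Vert\Lambda\Vert^4$. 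For the equivalence I would argue both directions at once. If $\Q_\Lambda(g)=\Vert\Lambda\Vert^4$, then $|\L_\Lambda(g)|=\Vert\Lambda\Vert^2>0$, so setting $h:=\L_\Lambda(g)/\Vert\Lambda\Vert^2\in\s$ achieves $\A_\Lambda(h,g)=|\L_\Lambda(g)|=\Vert\Lambda\Vert^2$, whence $\Lambda\in\lin^\cc_{gh}(\hh_l,\hh_r)$ by (1). Conversely, if $\Lambda\in\lin^\cc_{gh}(\hh_l,\hh_r)$ for some $h$, then (1) gives $\langle h,\L_\Lambda(g)\rangle=\Vert\Lambda\Vert^2$; since $\langle h,\L_\Lambda(g)\rangle\le|\L_\Lambda(g)|\le\Vert\Lambda\Vert^2$, both inequalities are equalities, forcing $\Q_\Lambda(g)=\Vert\Lambda\Vert^4$ together with $\L_\Lambda(g)=|\L_\Lambda(g)|\,h=\Vert\Lambda\Vert^2h$, and this last identity pins down $h$ uniquely.

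For (3), I would use that $\Q_\Lambda$ is the quadratic form represented by the symmetric positive-semidefinite matrix $A^TA$ (Remark~\ref{rmk:quadraticform}), so its maximum over the unit sphere $\s$ equals its largest eigenvalue, attained at a corresponding unit eigenvector. Since by (2) we always have $\Q_\Lambda\le\Vert\Lambda\Vert^4$ on $\s$, the maximum eigenvalue equals $\Vert\Lambda\Vert^4$ if and only if $\Q_\Lambda(g)=\Vert\Lambda\Vert^4$ for some $g\in\s$, which by (2) is equivalent to the existence of $g,h\in\s$ with $\Lambda\in\lin^\cc_{gh}(\hh_l,\hh_r)$. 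The degenerate case $\Lambda=0$ is immediate, since then $\A_\Lambda\equiv0$, $\Q_\Lambda\equiv0$, and the zero map lies in every $\lin^\cc_{gh}(\hh_l,\hh_r)$. The only genuinely delicate point throughout is the bookkeeping in the two Cauchy--Schwarz equality cases: one must note that the equal-norm condition in the first application upgrades $u=cv$ to $u=v$ (so that one recovers $R_h\circ\Lambda=\Lambda\circ L_g$, not merely a scalar multiple), and one must check that $\L_\Lambda(g)\ne0$ is automatic whenever equality holds, so that the normalized vector $h$ is well defined.
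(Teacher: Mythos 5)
Your proof is correct, and its overall architecture (part 1 via the norm identity for $\A_\Lambda(h,g)$, part 2 by optimizing over $h$ with $\L_\Lambda$, part 3 by spectral theory for the positive semidefinite form $\Q_\Lambda$) matches the paper's. The two places where you phrase things differently are worth noting. In part 1 the paper expands $0\le\Vert\Lambda+R_h\circ\Lambda\circ L_g\Vert^2=2\Vert\Lambda\Vert^2-2\A_\Lambda(h,g)$ and reads off the equality case $\Lambda=-R_h\circ\Lambda\circ L_g$; your Cauchy--Schwarz argument between the equal-norm maps $R_h\circ\Lambda$ and $\Lambda\circ L_g$ is the same inequality in folded form, and both land on Remark~\ref{rmk:characterizationofcomplexlinearity}, so this is only a cosmetic difference. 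The genuine divergence is in the converse direction of part 2: the paper treats $(h,g)$ as a constrained maximum of $\A_\Lambda$ on $\s\times\s$ and extracts $\L_\Lambda(g)=2\lambda h$ from the vanishing gradient of a Lagrangian, whereas you squeeze the chain $\Vert\Lambda\Vert^2=\langle h,\L_\Lambda(g)\rangle\le|\L_\Lambda(g)|\le\Vert\Lambda\Vert^2$ to force both inequalities into equalities, obtaining $\Q_\Lambda(g)=\Vert\Lambda\Vert^4$ and $\L_\Lambda(g)=\Vert\Lambda\Vert^2h$ in one stroke. Your route is more elementary (no calculus, no choice of coordinates for the Lagrangian) and delivers the uniqueness of $h$ as an immediate byproduct; the paper's route makes explicit the interpretation of $\Vert\Lambda\Vert^2$ as the Lagrange multiplier. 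You also correctly flag the two delicate points --- that equal norms upgrade proportionality to equality, and that $\L_\Lambda(g)\ne0$ is automatic when $\Lambda\ne0$ and equality holds --- which the paper handles implicitly.
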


\begin{proof}
Let $g,h\in\s$: then $L_g,R_h$ are orthogonal real linear transformations and $R_h^{-1}=-R_h$. Taking into account that $\langle\cdot,\cdot\rangle$ is invariant under orthonormal changes of bases, we have
\begin{align*}
0&\leq\Vert \Lambda+R_h\circ\Lambda\circ L_g\Vert^2=\left\Vert\Lambda\right\Vert^2+2\langle \Lambda,R_h\circ\Lambda\circ L_g\rangle+\left\Vert R_h\circ\Lambda\circ L_g\right\Vert^2\\
&=2\left\Vert\Lambda\right\Vert^2-2\langle R_h\circ\Lambda,\Lambda\circ L_g\rangle=2\left\Vert\Lambda\right\Vert^2-2\A_\Lambda(h,g)\,,
\end{align*}
whence $\A_\Lambda(h,g)\leq\left\Vert\Lambda\right\Vert^2$. Equality holds if, and only if, $\Lambda=-R_h\circ\Lambda\circ L_g$, which is in turn equivalent to $\Lambda\in\lin^\cc_{gh}(\hh_l,\hh_r)$ by Remark~\ref{rmk:characterizationofcomplexlinearity}. The first statement is therefore proven.

We now turn to the second statement. For $h:=\left|\L_{\Lambda}(g)\right|^{-1}\L_{\Lambda}(g)$, we have
\[\left\Vert\Lambda\right\Vert^2\geq\A_\Lambda(h,g)=\langle h,\L_\Lambda(g)\rangle=\left|\L_{\Lambda}(g)\right|^{-1}\langle \L_\Lambda(g),\L_\Lambda(g)\rangle=\left|\L_{\Lambda}(g)\right|\,,\]
whence the desired inequality $\left\Vert\Lambda\right\Vert^4\geq\left|\L_{\Lambda}(g)\right|^2=\Q_\Lambda(g)$ follows. In both of the former inequalities, if equality holds, then $\Lambda\in\lin^\cc_{gh}(\hh_l,\hh_r)$. Conversely, let us assume $g,h\in\s$ and $\Lambda\in\lin^\cc_{gh}(\hh_l,\hh_r)$, whence $\A_\Lambda(h,g)=\left\Vert\Lambda\right\Vert^2$ and $(h,g)$ is a maximum point for $(\A_\Lambda)_{|_{\s\times\s}}$. Fix an orthonormal basis $\B=(1,e_1,e_2,e_3)$ of $\hh$, set $A:=A_{\Lambda,\B}$ and assume $g=g_1e_1+g_2e_2+g_3e_3,h=h_1e_1+h_2e_2+h_3e_3$. The gradient of the map $\im(\hh)\times\im(\hh)\times\rr^2\to\rr,\ (X,Y,\lambda,\mu)\mapsto X^TAY-\lambda(\Vert X\Vert^2-1)-\mu(\Vert Y\Vert^2-1)$ must vanish when
\[(X,Y)=(H,G),\quad G:=\begin{bmatrix} g_1\\g_2\\g_3\end{bmatrix},\quad H:=\begin{bmatrix}h_1\\h_2\\h_3\end{bmatrix}\,.\]
In particular, $AY-2\lambda X$ vanishes at $(X,Y)=(H,G)$. Thus, $AG=2\lambda H$, i.e, $\L_\Lambda(g)=2\lambda h$. It follows that
\[\left\Vert\Lambda\right\Vert^2=\A_\Lambda(h,g)=\langle h,\L_\Lambda(g)\rangle=\langle h,2\lambda h\rangle=2\lambda\]
and that
\[\Q_\Lambda(g)=\left|\L_{\Lambda}(g)\right|^2=|2\lambda|^2=\left\Vert\Lambda\right\Vert^4\,,\]
as desired. In particular, $\L_\Lambda(g)=\left\Vert\Lambda\right\Vert^2h$. When $\Lambda\neq0$, there is a unique $h\in\s$ fulfilling this equality. The second statement is therefore proven.

We conclude by proving the third statement. If the maximum eigenvalue of $\Q_\Lambda$ equals $\left\Vert\Lambda\right\Vert^4$ and if $p\in\im(\hh)$ is an eigenvector with eigenvalue $\left\Vert\Lambda\right\Vert^4$, then $g:=\frac{p}{|p|}$ is an element of $\s$ such that $\Q_\Lambda(g)=\left\Vert\Lambda\right\Vert^4$. By the second statement, there exists $h\in\s$ such that $\Lambda\in\lin^\cc_{gh}(\hh_l,\hh_r)$. Conversely, if $\Lambda\in\lin^\cc_{gh}(\hh_l,\hh_r)$ for some $g,h\in\s$, then the second statement yields that $\Q_\Lambda(g)=\left\Vert\Lambda\right\Vert^4$, whence $\left\Vert\Lambda\right\Vert^4$ is the maximum eigenvalue of $\Q_\Lambda$.
\end{proof}

The significance of $\M_\Lambda$ will become clear in the forthcoming Subsection~\ref{subsec:regulalinearmaps}. We now aim at computing the matrix $A_{\Lambda,\B}$ associated to $\A_\Lambda,\L_\Lambda$ and the matrix $M_{\Lambda,\B}$ associated to $\M_\Lambda$ with respect to a positively oriented orthonormal basis $\B$. The following technical lemma will be useful.

\begin{lemma}\label{lem:realpartofproduct}
Fix an orthonormal basis $(e_0,e_1,e_2,e_3)$ of $\hh$ with $e_0=1$. Let $\alpha,\beta\in\{1,2,3\}$ and $m,n\in\{0,1,2,3\}$. The expression $\re(\bar e_m e_\alpha e_n e_\beta)$ is different from zero only in the following cases:
\begin{itemize}
\item $\alpha=\beta$ and $m=n$
\item $\alpha\neq\beta$ and $\{m,n\}$ equals either $\{\alpha,\beta\}$ or $\{0,1,2,3\}\setminus\{\alpha,\beta\}$.
\end{itemize}
\end{lemma}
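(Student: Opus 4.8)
The plan is to reduce the four-fold product to a single inner product and then run a short case analysis. First I would use the identity $\re(\bar x y)=\langle x,y\rangle$, valid for all $x,y\in\hh$ (it follows from the paper's convention $\langle x,y\rangle=\re(x\bar y)$ together with $\re(ab)=\re(ba)$), applied with $x=e_m$ and $y=e_\alpha e_n e_\beta$. This gives
\[\re(\bar e_m e_\alpha e_n e_\beta)=\langle e_m,\,e_\alpha e_n e_\beta\rangle,\]
so the quantity in question is exactly the $e_m$-coordinate of the product $P:=e_\alpha e_n e_\beta$ in the orthonormal basis $(e_0,e_1,e_2,e_3)$. Since $e_0=1$, the triple $(e_1,e_2,e_3)$ is an orthonormal basis of $\im(\hh)$, and I would record the multiplication rules available for such a basis: $e_\gamma^2=-1$ for $\gamma\in\{1,2,3\}$; $e_\gamma e_\delta=-e_\delta e_\gamma$ for $\gamma\neq\delta$; and, crucially, $e_\gamma e_\delta=\pm e_\varepsilon$ whenever $\{\gamma,\delta,\varepsilon\}=\{1,2,3\}$, because the product of two orthogonal imaginary units equals their cross product, a unit imaginary quaternion orthogonal to both.

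The key observation is that $P$ is always $\pm$ a single basis vector $e_{m^*}$; consequently $\langle e_m,P\rangle=\pm\,\delta_{m\,m^*}$ is nonzero precisely when $m=m^*$. To verify this and to identify $m^*$, I would split into cases, writing $\gamma$ for the third index $\{1,2,3\}\setminus\{\alpha,\beta\}$ whenever $\alpha\neq\beta$. If $n=0$ then $P=e_\alpha e_\beta$, which equals $-e_0$ for $\alpha=\beta$ and $\pm e_\gamma$ for $\alpha\neq\beta$. If $n\in\{1,2,3\}$ and $\alpha=\beta$, then $P=e_\alpha e_n e_\alpha$ equals $-e_\alpha$ when $n=\alpha$ and equals $e_n$ when $n\neq\alpha$ (using $e_\alpha e_n=-e_n e_\alpha$ and $e_\alpha^2=-1$), so $m^*=n$ in either sub-case. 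If $n\in\{1,2,3\}$ and $\alpha\neq\beta$, then $n=\alpha$ gives $P=-e_\beta$, $n=\beta$ gives $P=-e_\alpha$, and $n=\gamma$ gives $P=\pm e_\beta e_\beta=\mp e_0$ (since $e_\alpha e_\gamma=\pm e_\beta$), so that $m^*$ equals $\beta$, $\alpha$, $0$ respectively.

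It then remains to match each value of $m^*$ with the stated conditions, which is pure bookkeeping. When $\alpha=\beta$ every case yields $m^*=n$, so nonvanishing forces $m=n$, matching the first bullet. When $\alpha\neq\beta$: the cases $n=\alpha,\beta$ give $m^*\in\{\alpha,\beta\}$ with $\{m,n\}=\{\alpha,\beta\}$, while the cases $n=0,\gamma$ give $m^*\in\{0,\gamma\}$ with $\{m,n\}=\{0,\gamma\}=\{0,1,2,3\}\setminus\{\alpha,\beta\}$, matching the two alternatives in the second bullet. I expect no serious obstacle: the only point needing care is the structural fact that products of two (respectively three) distinct orthonormal imaginary units reduce to $\pm$ a single basis vector, and the observation that the conclusion is independent of the orientation of $\B$, since orientation only affects the signs $\pm$, which are irrelevant to the question of vanishing.
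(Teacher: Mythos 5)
Your proof is correct and follows essentially the same route as the paper's: an elementary case analysis driven by the multiplication rules $e_\gamma^2=-1$, $e_\gamma e_\delta=-e_\delta e_\gamma$, and $e_\gamma e_\delta=\pm e_\varepsilon$ for an orthonormal basis of $\im(\hh)$. The only difference is organizational — you factor the computation through the observation that $e_\alpha e_n e_\beta$ is always $\pm$ a single basis vector and read the result off as $\langle e_m, e_\alpha e_n e_\beta\rangle$, which incidentally pins down the unique nonvanishing $m$ for each $(\alpha,\beta,n)$, whereas the paper splits cases directly on the multiset $\{m,n,\alpha,\beta\}$.
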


\begin{proof}
First assume $\alpha=\beta$: if $m=n$, then $\bar e_m e_\alpha e_n e_\beta=\pm e_\alpha^2=\mp1$; if $m\neq n$, then $\bar e_m e_\alpha e_n e_\beta=\bar e_m e_\alpha e_n e_\alpha=\pm e_m e_n$ is an imaginary unit and $\re(\bar e_m e_\alpha e_n e_\beta)=0$.

Now assume $\alpha\neq\beta$. If $\{m,n,\alpha,\beta\}=\{0,1,2,3\}$, then $\bar e_m e_\alpha e_n e_\beta=\pm e_1e_2e_3$ is either $1$ or $-1$. If $\{m,n,\alpha,\beta\}=\{0,\alpha,\beta\}$, then $\bar e_m e_\alpha e_n e_\beta$ equals $e_\alpha e_\beta$, or $\pm e_\alpha^2e_\beta=\mp e_\beta$, or $\pm e_\alpha e_\beta^2=\mp e_\alpha$: in all three cases, $\re(\bar e_m e_\alpha e_n e_\beta)=0$. If $\{m,n,\alpha,\beta\}=\{1,2,3\}$, then $\bar e_m e_\alpha e_n e_\beta$ equals $\pm e_1 e_2$, or $\pm e_1 e_3$, or $\pm e_2 e_3$: in all three cases, $\re(\bar e_m e_\alpha e_n e_\beta)=0$. Finally, suppose $\{m,n,\alpha,\beta\}=\{\alpha,\beta\}$: if $m=n$, then $\bar e_m e_\alpha e_n e_\beta=\pm e_\alpha e_\beta$ has $\re(\bar e_m e_\alpha e_n e_\beta)=0$; if $m\neq n$ (i.e., $\{m,n\}=\{\alpha,\beta\}$), then $\bar e_m e_\alpha e_n e_\beta=\pm e_\alpha^2 e_\beta^2$ is either $1$ or $-1$.
\end{proof}

We are now ready for the announced computation.

\begin{theorem}\label{thm:formmatrix}
Let $\Lambda\in\lin^\rr(\hh)$. Fix an orthonormal basis $\B=(e_0,e_1,e_2,e_3)$ of $\hh$ with $e_0=1$ and with $e_3=e_1 e_2$. Take $\lambda_0,\lambda_1,\lambda_2,\lambda_3\in\hh$ such that $\Lambda=\sum_{m=0}^3\lambda_m\bar\vartheta_{e_m}$. Then
\begin{align}\label{eq:matrixA}
&A_{\Lambda,\B}=\\
&\begin{bmatrix}
-|\lambda_0|^2-|\lambda_1|^2+|\lambda_2|^2+|\lambda_3|^2&-2\langle \lambda_1 e_1,\lambda_2 e_2\rangle+2\langle \lambda_0,\lambda_3 e_3\rangle&-2\langle \lambda_1 e_1, \lambda_3 e_3\rangle-2\langle \lambda_0,\lambda_2 e_2\rangle\\
-2\langle \lambda_1 e_1,\lambda_2 e_2\rangle-2\langle \lambda_0,\lambda_3 e_3\rangle&-|\lambda_0|^2+|\lambda_1|^2-|\lambda_2|^2+|\lambda_3|^2&-2\langle \lambda_2 e_2, \lambda_3 e_3\rangle+2\langle \lambda_0,\lambda_1 e_1\rangle\\
-2\langle \lambda_1 e_1, \lambda_3 e_3\rangle+2\langle\lambda_0,\lambda_2 e_2\rangle&-2\langle \lambda_2 e_2, \lambda_3 e_3\rangle-2\langle \lambda_0,\lambda_1 e_1\rangle&-|\lambda_0|^2+|\lambda_1|^2+|\lambda_2|^2-|\lambda_3|^2\\
\end{bmatrix}\,.\notag
\end{align}
In particular, $\tr(\A_\Lambda)=\tr(\M_\Lambda)=-3|\lambda_0|^2+|\lambda_1|^2+|\lambda_2|^2+|\lambda_3|^2=\|\Lambda\|^2-4|\lambda_0|^2$. Moreover,
\begin{align}\label{eq:matrixM}
&M_{\Lambda,\B}=\begin{bmatrix}
|\lambda_1|^2-|\lambda_0|^2&\langle \lambda_1 e_1,\lambda_2 e_2\rangle-\langle \lambda_0,\lambda_3 e_3\rangle&\langle \lambda_1 e_1, \lambda_3 e_3\rangle+\langle \lambda_0,\lambda_2 e_2\rangle\\
\langle \lambda_1 e_1,\lambda_2 e_2\rangle+\langle \lambda_0,\lambda_3 e_3\rangle&|\lambda_2|^2-|\lambda_0|^2&\langle \lambda_2 e_2, \lambda_3 e_3\rangle-\langle \lambda_0,\lambda_1 e_1\rangle\\
\langle \lambda_1 e_1, \lambda_3 e_3\rangle-\langle\lambda_0,\lambda_2 e_2\rangle&\langle \lambda_2 e_2, \lambda_3 e_3\rangle+\langle \lambda_0,\lambda_1 e_1\rangle&|\lambda_3|^2-|\lambda_0|^2\\
\end{bmatrix}\,.
\end{align}
\end{theorem}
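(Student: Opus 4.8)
The plan is to compute the matrix $A_{\Lambda,\B}=A_{\Lambda,\B}$ entrywise, directly from the definition $\A_\Lambda(p,q)=\langle R_p\circ\Lambda,\Lambda\circ L_q\rangle$, then read off $M_{\Lambda,\B}$ from the identity $\M_\Lambda(p,q)=\frac12\tr(\A_\Lambda)\langle p,q\rangle-\frac12\A_\Lambda(p,q)$. First I would unwind the real scalar product of two maps in $\lin^\rr(\hh)$: for the basis $\B=(e_0,\dots,e_3)$ one has
\[
(A_{\Lambda,\B})_{\alpha\beta}=\A_\Lambda(e_\alpha,e_\beta)=\langle R_{e_\alpha}\circ\Lambda,\Lambda\circ L_{e_\beta}\rangle=\frac14\sum_{n=0}^3\langle\Lambda(e_n)e_\alpha,\Lambda(e_\beta e_n)\rangle
\]
for $\alpha,\beta\in\{1,2,3\}$. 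Substituting $\Lambda=\sum_m\lambda_m\bar\vartheta_{e_m}$ and using $\bar\vartheta_{e_m}(x)=e_m\bar x\bar e_m$, each inner product $\langle\cdot,\cdot\rangle=\re(\,\cdot\,\overline{(\cdot)})$ expands into a double sum over $m,m'$ of terms of the shape $\re(\lambda_m\,\omega_{m,m',\alpha,\beta,n}\,\bar\lambda_{m'})$, where $\omega$ is a product of basis units.

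The crucial simplification is that the scalar index $n$ gets summed over $\{0,1,2,3\}$ while $\alpha,\beta$ are imaginary indices, and the structural factor reduces to expressions governed by $\re(\bar e_m e_\alpha e_n e_\beta)$-type quantities. This is precisely where Lemma~\ref{lem:realpartofproduct} does the heavy lifting: it tells us that summing over $n$ kills almost every cross term, surviving only when $m=m'$ (giving the $|\lambda_m|^2$ diagonal contributions) or when $\{m,m'\}$ is forced to be a specific pair (giving the $\langle\lambda_\mu e_\mu,\lambda_\nu e_\nu\rangle$ off-diagonal contributions). So the main work is a careful bookkeeping: for the diagonal entry $(\alpha,\alpha)$ one collects the four $|\lambda_m|^2$ terms with their correct signs (which alternate according to whether $e_m$ commutes or anticommutes with $e_\alpha$, producing the sign pattern $-|\lambda_0|^2\pm|\lambda_1|^2\pm|\lambda_2|^2\pm|\lambda_3|^2$ visible in~\eqref{eq:matrixA}); for the off-diagonal entry $(\alpha,\beta)$ one collects exactly two surviving cross terms, namely the pair $\{m,m'\}$ equal to $\{\alpha,\beta\}$ and the complementary pair $\{0,\gamma\}$ where $\gamma$ is the third imaginary index, yielding the two summands $\pm2\langle\lambda_\mu e_\mu,\lambda_\nu e_\nu\rangle\pm2\langle\lambda_0,\lambda_\gamma e_\gamma\rangle$.

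The main obstacle I anticipate is pinning down the signs correctly and consistently. The asymmetry between the two off-diagonal entries $(\alpha,\beta)$ and $(\beta,\alpha)$ — note in~\eqref{eq:matrixA} that e.g.\ the $(1,2)$ entry has $+2\langle\lambda_0,\lambda_3 e_3\rangle$ while the $(2,1)$ entry has $-2\langle\lambda_0,\lambda_3 e_3\rangle$ — reflects that $\A_\Lambda$ is not symmetric, and tracking which of $e_\alpha e_\beta$ versus $e_\beta e_\alpha$ occurs requires honoring the chosen orientation $e_3=e_1e_2$ throughout. I would organize this by treating the three off-diagonal index pairs $(1,2),(1,3),(2,3)$ in turn, each time factoring the relevant products using $e_1e_2=e_3$, $e_2e_3=e_1$, $e_3e_1=e_2$ and their anticommutation, and reading the sign of $\re(\bar e_m e_\alpha e_n e_\beta)$ in each surviving case from the proof of Lemma~\ref{lem:realpartofproduct}.

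Once $A_{\Lambda,\B}$ is established, the remaining claims are immediate. The trace is
\[
\tr(\A_\Lambda)=\sum_{\alpha=1}^3(A_{\Lambda,\B})_{\alpha\alpha}=-3|\lambda_0|^2+|\lambda_1|^2+|\lambda_2|^2+|\lambda_3|^2,
\]
which equals $\|\Lambda\|^2-4|\lambda_0|^2$ because $\|\Lambda\|^2=\sum_{m=0}^3|\lambda_m|^2$ by the orthogonality of the summands $\lambda_m\bar\vartheta_{e_m}$ recorded in Subsection~\ref{subsec:linear} (together with $\|\bar\vartheta_{e_m}\|=1$). Since $\tr(\M_\Lambda)=\frac12\tr(\A_\Lambda)\tr(\mathrm{id}_{\im\hh})-\frac12\tr(\A_\Lambda)=\tr(\A_\Lambda)$ by the defining formula with $\langle p,q\rangle\mapsto\delta$ traced over a $3$-dimensional space, the stated equality of traces follows. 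Finally, $M_{\Lambda,\B}=\frac12\tr(\A_\Lambda)\,I_3-\frac12 A_{\Lambda,\B}$ entrywise produces~\eqref{eq:matrixM}: each diagonal entry becomes $\frac12\tr(\A_\Lambda)-\frac12(A_{\Lambda,\B})_{\alpha\alpha}$, which simplifies to $|\lambda_\alpha|^2-|\lambda_0|^2$, and each off-diagonal entry is just $-\frac12$ times the corresponding entry of $A_{\Lambda,\B}$, halving the coefficient $2$ down to $1$ and flipping sign, exactly matching~\eqref{eq:matrixM}. I would present the full entrywise expansion for one representative diagonal and one representative off-diagonal entry and indicate that the rest follow by the same computation under cyclic permutation of the indices.
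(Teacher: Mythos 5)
Your proposal is correct and follows essentially the same route as the paper: an entrywise expansion of $\A_\Lambda(e_\alpha,e_\beta)=\frac14\sum_\ell\re\bigl(\Lambda(e_\ell)e_\alpha\overline{\Lambda(e_\beta e_\ell)}\bigr)$ into a double sum over the decomposition indices, reduction of the inner sum over $\ell$ to the scalars $\re(\bar e_m e_\alpha e_n e_\beta)$ via the averaging identity of Example~\ref{ex:fourtimesrealpart}, and then Lemma~\ref{lem:realpartofproduct} to isolate the surviving diagonal and cross terms, with the trace and $M_{\Lambda,\B}=\frac12(\tr(A)I_3-A)$ computations handled identically. The sign bookkeeping you flag as the main obstacle is indeed where the paper spends its effort, organized exactly as you propose (by the three off-diagonal pairs, with the permutation sign $\sigma_{\alpha,\beta,\gamma}$ tracking the orientation $e_3=e_1e_2$).
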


\begin{proof}
For $\alpha,\beta\in\{1,2,3\}$, let us compute the coefficients $a_{\alpha\beta}:=\langle R_{e_\alpha}\circ\Lambda,\Lambda\circ L_{e_\beta}\rangle$ of the matrix $A=A_{\Lambda,\B}$:
\begin{align*}
a_{\alpha\beta}&=\langle R_{e_\alpha}\circ\Lambda,\Lambda\circ L_{e_\beta}\rangle=\re\left(R_{e_\alpha}\circ\Lambda,\Lambda\circ L_{e_\beta}\right)_l\\
&=\frac14\sum_{\ell=0}^3\re\left(R_{e_\alpha}\circ\Lambda(e_\ell)\,\overline{\Lambda\circ L_{e_\beta}(e_\ell)}\right)=\frac14\sum_{\ell=0}^3\re\left(\Lambda(e_\ell)e_\alpha\overline{\Lambda(e_\beta e_\ell)}\right)\\
&=\frac14\sum_{\ell=0}^3\sum_{m,n=0}^3\re\left(\lambda_m\bar\vartheta_{e_m}(e_\ell)e_\alpha\vartheta_{e_n}(e_\beta e_\ell)\bar\lambda_n\right)\\
&=\frac14\sum_{m,n=0}^3\re\left(\lambda_me_m\left(\sum_{\ell=0}^3\bar e_\ell\left(\bar e_m e_\alpha e_n e_\beta\right)e_\ell\right) \bar e_n\bar\lambda_n\right)\,.
\end{align*}
According to Example~\ref{ex:fourtimesrealpart}, every $v\in\hh$ has $\frac14\sum_{\ell=0}^3\bar e_\ell v e_\ell=\sum_{\ell=0}^3\frac14\vartheta_{\bar e_\ell}(v)=\re(v)$. Thus,
\begin{align}
a_{\alpha\beta}
&=\sum_{m,n=0}^3\re\left(\lambda_me_m\re\left(\bar e_m e_\alpha e_n e_\beta\right)\bar e_n\bar\lambda_n\right)\notag
\\
&=\sum_{m,n=0}^3\re(\lambda_me_m\bar e_n\bar\lambda_n)\re\left(\bar e_m e_\alpha e_n e_\beta\right)\notag\\
&=\sum_{m,n=0}^3\langle\lambda_me_m,\lambda_n e_n\rangle\re\left(\bar e_m e_\alpha e_n e_\beta\right)
\,.\label{eq:aalphabeta}
\end{align}
We now apply Lemma~\ref{lem:realpartofproduct}. For $\alpha=\beta$, formula~\eqref{eq:aalphabeta} yields
\[a_{\alpha\alpha}=\sum_{m=0}^3\langle\lambda_me_m,\lambda_m e_m\rangle\re\left(\bar e_m e_\alpha e_m e_\alpha\right)=\sum_{m=0}^3|\lambda_m|^2\re\left(\bar e_m e_\alpha e_m e_\alpha\right)\,,\]
whence
\begin{align*}
a_{11}&=-|\lambda_0|^2-|\lambda_1|^2+|\lambda_2|^2+|\lambda_3|^2\,,\\
a_{22}&=-|\lambda_0|^2+|\lambda_1|^2-|\lambda_2|^2+|\lambda_3|^2\,,\\
a_{33}&=-|\lambda_0|^2+|\lambda_1|^2+|\lambda_2|^2-|\lambda_3|^2\,.
\end{align*}
If $\alpha\neq\beta$ and if $\{\alpha,\beta,\gamma\}=\{1,2,3\}$, formula~\eqref{eq:aalphabeta} and Lemma~\ref{lem:realpartofproduct} yield
\begin{align*}
a_{\alpha\beta}&=\sum_{\{m,n\}=\{\alpha,\beta\}}\langle\lambda_me_m,\lambda_n e_n\rangle\re\left(\bar e_m e_\alpha e_n e_\beta\right)+\sum_{\{m,n\}=\{0,\gamma\}}\langle\lambda_me_m,\lambda_n e_n\rangle\re\left(\bar e_m e_\alpha e_n e_\beta\right)\\
&=\langle \lambda_\alpha e_\alpha, \lambda_\beta e_\beta\rangle\re(\bar e_\alpha e_\alpha e_\beta e_\beta)+\langle \lambda_\beta e_\beta,\lambda_\alpha e_\alpha\rangle\re(\bar e_\beta e_\alpha e_\alpha e_\beta)+\langle\lambda_0,\lambda_\gamma e_\gamma\rangle\re\left(e_\alpha e_\gamma e_\beta\right)\\
&\phantom{=}+\langle\lambda_\gamma e_\gamma,\lambda_0\rangle\re\left(\bar e_\gamma e_\alpha e_\beta\right)=-2\langle \lambda_\alpha e_\alpha, \lambda_\beta e_\beta\rangle+2\sigma_{\alpha,\beta,\gamma}\langle\lambda_0,\lambda_\gamma e_\gamma\rangle\,,
\end{align*}
where $\sigma_{\alpha,\beta,\gamma}$ is the sign of the permutation $(\alpha,\beta,\gamma)$ of $\{1,2,3\}$. In other words,
\begin{align*}
a_{12}&=-2\langle \lambda_1 e_1, \lambda_2 e_2\rangle+2\langle\lambda_0,\lambda_3 e_3\rangle\,,\\
a_{21}&=-2\langle \lambda_1 e_1, \lambda_2 e_2\rangle-2\langle\lambda_0,\lambda_3 e_3\rangle\,,\\
a_{13}&=-2\langle \lambda_1 e_1, \lambda_3 e_3\rangle-2\langle\lambda_0,\lambda_2 e_2\rangle\,,\\
a_{31}&=-2\langle \lambda_1 e_1, \lambda_3 e_3\rangle+2\langle\lambda_0,\lambda_2 e_2\rangle\,,\\
a_{23}&=-2\langle \lambda_2 e_2, \lambda_3 e_3\rangle+2\langle\lambda_0,\lambda_1 e_1\rangle\,,\\
a_{32}&=-2\langle \lambda_2 e_2, \lambda_3 e_3\rangle-2\langle\lambda_0,\lambda_1 e_1\rangle\,.
\end{align*}
We have thus proven formula~\eqref{eq:matrixA}. The equality
\[\tr(A)=-3|\lambda_0|^2+|\lambda_1|^2+|\lambda_2|^2+|\lambda_3|^2=\|\Lambda\|^2-4|\lambda_0|^2\]
easily follows. If we denote the identity in $GL(3,\rr)$ as $I_3$, then the definition of $\M_\Lambda$ yields
\[M_{\Lambda,\B}=\frac12\left(\tr(A)I_3-A\right)\]
whence both formula~\eqref{eq:matrixM} and the equality $\tr\left(M_{\Lambda,\B}\right)=\frac12\left(3\tr(A)-\tr(A)\right)=\tr(A)$ follow.
\end{proof}

\begin{example}
The real linear map $cj=\bar\vartheta_1$ has $\Vert cj\Vert^2=1$ and $A_{cj,\B}=-I_3$ with respect to the standard basis $\B$ of $\hh$. It follows that $\A_{cj}(p,q)=-\langle p,q\rangle,\L_{cj}(q)=-q$ and $\Q_{cj}(q)=\Vert q\Vert^2$ for all $p,q\in\im(\hh)$. For all $g\in\s$, we have $\Q_{cj}(g)=1$. Moreover,
\[cj\in\lin_{gh}^\cc(\hh_l,\hh_r)\Longleftrightarrow h=\L_{cj}(g)=-g\,.\]
\end{example}

\begin{example}
The real linear map $\Lambda(z_1+z_2j)=2z_1+\bar z_1+z_2-2\bar z_2$ can be expressed as
\[\Lambda=\frac{1-2j}2\bar\vartheta_1+\frac{1+2j}2\bar\vartheta_i+\frac{2+j}2\bar\vartheta_j+\frac{2-j}2\bar\vartheta_k\]
with respect to the standard basis $\B$ of $\hh$. Thus, $|\lambda_\ell|^2=\frac54$ for all $\ell\in\{0,1,2,3\}$; moreover, $\lambda_1=\bar\lambda_0,\lambda_2j=-\lambda_0$ and $\lambda_3 k=-\lambda_1i$, whence $\langle \lambda_0,\lambda_2 j\rangle,\langle \lambda_1 i,\lambda_3 k\rangle$ both equal $-|\lambda_0|^2=-\frac54$, while $\langle \lambda_0,\lambda_1 i\rangle,-\langle \lambda_0,\lambda_3 k\rangle,-\langle \lambda_1 i,\lambda_2 j\rangle,\langle \lambda_2 j,\lambda_3 k\rangle$ all equal $\langle \lambda_0,\bar\lambda_0i\rangle=0$. By direct computation, $\Vert \Lambda\Vert^2=4\frac54=5$,
\[A=A_{\Lambda,\B}=
\begin{bmatrix}
0&0&5\\
0&0&0\\
0&0&0
\end{bmatrix},\quad
A^TA=
\begin{bmatrix}
0&0&0\\
0&0&0\\
0&0&25
\end{bmatrix}\,.\]
It follows that the only $g\in\s$ such that $\Q_{\Lambda}(g)=\Vert \Lambda\Vert^4=25$ are $g=\pm k$. Moreover, $\L_\Lambda(\pm k)=\pm 5i$, whence
\[\Lambda\in\lin_{gh}^\cc(\hh_l,\hh_r)\Longleftrightarrow (g,h)\in\{(k,i),(-k,-i)\}\,.\]
\end{example}

For future reference, we also consider the case when the basis $\B$ is not positively oriented. Going through the details of the proof of Theorem~\ref{thm:formmatrix}, we can make the following remark.

\begin{remark}\label{rmk:negativelyoriented}
Let $\Lambda\in\lin^\rr(\hh)$. Fix an orthonormal basis $\B=(e_0,e_1,e_2,e_3)$ of $\hh$ with $e_0=1$ and with $e_3=e_2 e_1$. If $\lambda_0,\lambda_1,\lambda_2,\lambda_3\in\hh$ are such that $\Lambda=\sum_{m=0}^3\lambda_m\bar\vartheta_{e_m}$, then
\begin{align*}
&A_{\Lambda,\B}=\\
&\begin{bmatrix}
-|\lambda_0|^2-|\lambda_1|^2+|\lambda_2|^2+|\lambda_3|^2&-2\langle \lambda_1 e_1,\lambda_2 e_2\rangle-2\langle \lambda_0,\lambda_3 e_3\rangle&-2\langle \lambda_1 e_1, \lambda_3 e_3\rangle+2\langle \lambda_0,\lambda_2 e_2\rangle\\
-2\langle \lambda_1 e_1,\lambda_2 e_2\rangle+2\langle \lambda_0,\lambda_3 e_3\rangle&-|\lambda_0|^2+|\lambda_1|^2-|\lambda_2|^2+|\lambda_3|^2&-2\langle \lambda_2 e_2, \lambda_3 e_3\rangle-2\langle \lambda_0,\lambda_1 e_1\rangle\\
-2\langle \lambda_1 e_1, \lambda_3 e_3\rangle-2\langle\lambda_0,\lambda_2 e_2\rangle&-2\langle \lambda_2 e_2, \lambda_3 e_3\rangle+2\langle \lambda_0,\lambda_1 e_1\rangle&-|\lambda_0|^2+|\lambda_1|^2+|\lambda_2|^2-|\lambda_3|^2\\
\end{bmatrix}\,.
\end{align*}
and
\begin{align*}
&M_{\Lambda,\B}=\begin{bmatrix}
|\lambda_1|^2-|\lambda_0|^2&\langle \lambda_1 e_1,\lambda_2 e_2\rangle+\langle \lambda_0,\lambda_3 e_3\rangle&\langle \lambda_1 e_1, \lambda_3 e_3\rangle-\langle \lambda_0,\lambda_2 e_2\rangle\\
\langle \lambda_1 e_1,\lambda_2 e_2\rangle-\langle \lambda_0,\lambda_3 e_3\rangle&|\lambda_2|^2-|\lambda_0|^2&\langle \lambda_2 e_2, \lambda_3 e_3\rangle+\langle \lambda_0,\lambda_1 e_1\rangle\\
\langle \lambda_1 e_1, \lambda_3 e_3\rangle+\langle\lambda_0,\lambda_2 e_2\rangle&\langle \lambda_2 e_2, \lambda_3 e_3\rangle-\langle \lambda_0,\lambda_1 e_1\rangle&|\lambda_3|^2-|\lambda_0|^2\\
\end{bmatrix}\,.
\end{align*}
\end{remark}

%%%
\subsection{Regular real linear maps}\label{subsec:regulalinearmaps}

We now turn back to the study of the left quaternionic subspace $\lin^\rr_{\mathcal{RF}}(\hh)$ of $\lin^\rr(\hh)$. With respect to an orthonormal basis $\B=(e_0,e_1,e_2,e_3)$ of $\hh$ with $e_0=1$, a real linear map $\Lambda=\sum_{m=0}^3\lambda_m\bar\vartheta_{e_m}$ is regular if, and only if, $\lambda_0=0$. Therefore, Theorem~\ref{thm:formmatrix} has the following immediate consequence.

\begin{corollary}\label{cor:matrixcharacterization}
Let $\Lambda\in\lin^\rr(\hh)$. The inclusion $\Lambda\in\lin^\rr_{\mathcal{RF}}(\hh)$ holds if, and only if, the number $\tr (\A_{\Lambda})=\tr (\M_{\Lambda})$ equals $\|\Lambda\|^2$.
\end{corollary}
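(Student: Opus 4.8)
The plan is to obtain the statement directly from Theorem~\ref{thm:formmatrix}, which already computes the relevant trace in terms of the decomposition coefficients. So I would begin by fixing an orthonormal basis $\B=(e_0,e_1,e_2,e_3)$ of $\hh$ with $e_0=1$ and writing $\Lambda=\sum_{m=0}^3\lambda_m\bar\vartheta_{e_m}$ as in Subsection~\ref{subsec:linear}. The whole argument then rests on the identity recorded in that theorem, namely $\tr(\A_\Lambda)=\tr(\M_\Lambda)=\|\Lambda\|^2-4|\lambda_0|^2$.

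From this identity the equivalence is immediate: $\tr(\A_\Lambda)=\|\Lambda\|^2$ holds if and only if $|\lambda_0|^2=0$, i.e.\ if and only if $\lambda_0=0$. The remaining step is to recognize the condition $\lambda_0=0$ as membership in $\lin^\rr_{\mathcal{RF}}(\hh)$. Since $e_0=1$ we have $\bar\vartheta_{e_0}=cj$, and the orthogonal-decomposition formulas of Subsection~\ref{subsec:linear} give $\lambda_0=(\Lambda,\bar\vartheta_{e_0})_l=(\Lambda,cj)_l$. By the very definition of $\lin^\rr_{\mathcal{RF}}(\hh)$ recalled in Subsection~\ref{subsec:regularlinearmaps}, one has $\Lambda\in\lin^\rr_{\mathcal{RF}}(\hh)$ precisely when $(\Lambda,cj)_l=0$, that is, precisely when $\lambda_0=0$. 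Chaining the two equivalences yields
\[
\Lambda\in\lin^\rr_{\mathcal{RF}}(\hh)\iff\lambda_0=0\iff\tr(\A_\Lambda)=\|\Lambda\|^2,
\]
while the equality $\tr(\A_\Lambda)=\tr(\M_\Lambda)$ is supplied by Theorem~\ref{thm:formmatrix} itself.

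There is essentially no obstacle here, since the heavy lifting is done in Theorem~\ref{thm:formmatrix}; the corollary is just a reading of its trace formula together with the orthogonal-projection interpretation of $\lambda_0$. The only minor point to keep in mind is that the displayed trace formula was derived relative to a \emph{positively oriented} orthonormal basis with $e_0=1$; but such a basis always exists, and both $\tr(\A_\Lambda)=\tr(\M_\Lambda)$, $\|\Lambda\|^2$, and the property of lying in $\lin^\rr_{\mathcal{RF}}(\hh)$ are independent of the chosen basis (regularity was defined intrinsically in Subsection~\ref{subsec:regularlinearmaps}), so working in one such basis loses no generality.
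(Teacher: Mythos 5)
Your proposal is correct and coincides with the paper's own argument: the corollary is presented there as an immediate consequence of Theorem~\ref{thm:formmatrix}, using exactly the identity $\tr(\A_\Lambda)=\|\Lambda\|^2-4|\lambda_0|^2$ together with the observation that $\lambda_0=(\Lambda,cj)_l$ vanishes precisely when $\Lambda\in\lin^\rr_{\mathcal{RF}}(\hh)$. Your extra remark about the positively oriented basis is a harmless (and accurate) precaution.
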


We can state yet another consequence of Theorem~\ref{thm:formmatrix} and of Remark~\ref{rmk:negativelyoriented}.

\begin{corollary}\label{cor:formmatrix}
If $\Lambda\in\lin^\rr_{\mathcal{RF}}(\hh)$ decomposes as $\Lambda=\lambda_1\bar\vartheta_{e_1}+\lambda_2\bar\vartheta_{e_2}+\lambda_3\bar\vartheta_{e_3}$ with respect to any orthonormal basis $\B=(1,e_1,e_2,e_3)$ of $\hh$, then
\begin{align*}
A_{\Lambda,\B}&=\begin{bmatrix}
-|\lambda_1|^2+|\lambda_2|^2+|\lambda_3|^2&-2\langle \lambda_1 e_1,\lambda_2 e_2\rangle&-2\langle \lambda_1 e_1, \lambda_3 e_3\rangle\\
-2\langle \lambda_1 e_1,\lambda_2 e_2\rangle&|\lambda_1|^2-|\lambda_2|^2+|\lambda_3|^2&-2\langle \lambda_2 e_2, \lambda_3 e_3\rangle\\
-2\langle \lambda_1 e_1, \lambda_3 e_3\rangle&-2\langle \lambda_2 e_2, \lambda_3 e_3\rangle&|\lambda_1|^2+|\lambda_2|^2-|\lambda_3|^2\\
\end{bmatrix}\,,\\
M_{\Lambda,\B}&=\begin{bmatrix}
|\lambda_1|^2&\langle \lambda_1 e_1, \lambda_2 e_2\rangle&\langle \lambda_1 e_1, \lambda_3 e_3\rangle\\
\langle \lambda_1 e_1, \lambda_2 e_2\rangle&|\lambda_2|^2&\langle \lambda_2 e_2, \lambda_3 e_3\rangle\\
\langle \lambda_1 e_1, \lambda_3 e_3\rangle&\langle \lambda_2 e_2, \lambda_3 e_3\rangle&|\lambda_3|^2\\
\end{bmatrix}\,.
\end{align*}
In other words, $M_{\Lambda,\B}$ is the Gram matrix of $\lambda_1 e_1,\lambda_2 e_2,\lambda_3 e_3$. In particular, $\A_\Lambda,\M_\Lambda$ are both symmetric and $\M_\Lambda$ is positive semidefinite. If, moreover, $\B$ is chosen to make $A_{\Lambda,\B}$ diagonal, then $A_{\Lambda,\B}=\mathrm{diag}(-|\lambda_1|^2+|\lambda_2|^2+|\lambda_3|^2,|\lambda_1|^2-|\lambda_2|^2+|\lambda_3|^2,|\lambda_1|^2+|\lambda_2|^2-|\lambda_3|^2)$, $M_{\Lambda,\B}=\mathrm{diag}(|\lambda_1|^2,|\lambda_2|^2,|\lambda_3|^2)$ and the quaternions $\{\lambda_1 e_1,\lambda_2 e_2,\lambda_3e_3\}$ are mutually orthogonal.
\end{corollary}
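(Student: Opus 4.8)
The plan is to specialize the general matrix formulas of Theorem~\ref{thm:formmatrix} and Remark~\ref{rmk:negativelyoriented} to the regular case. As already noted at the start of this subsection, $\Lambda\in\lin^\rr_{\mathcal{RF}}(\hh)$ is equivalent to $\lambda_0=0$ in the decomposition $\Lambda=\sum_{m=0}^3\lambda_m\bar\vartheta_{e_m}$ with respect to any orthonormal basis $\B=(1,e_1,e_2,e_3)$, since $\lambda_0=(\Lambda,cj)_l$ does not depend on the choice of $e_1,e_2,e_3$. First I would set $\lambda_0=0$ in formulas~\eqref{eq:matrixA} and~\eqref{eq:matrixM}: every term carrying a factor $\lambda_0$ then drops out, yielding at once the two displayed matrices $A_{\Lambda,\B}$ and $M_{\Lambda,\B}$.

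The only point requiring care is the phrase ``with respect to \emph{any} orthonormal basis''. Theorem~\ref{thm:formmatrix} assumes the positive orientation $e_3=e_1e_2$, whereas Remark~\ref{rmk:negativelyoriented} treats $e_3=e_2e_1$; together these exhaust all orthonormal bases with $e_0=1$. Comparing the two sets of formulas, they differ only in the signs of the terms $\pm2\langle\lambda_0,\cdot\rangle$, all of which vanish once $\lambda_0=0$. Hence the positively and negatively oriented formulas collapse into the same expression in the regular case, which is precisely what legitimizes stating the result for every orthonormal basis irrespective of orientation.

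Reading off the entries, $M_{\Lambda,\B}$ has $(\alpha,\beta)$ entry $\langle\lambda_\alpha e_\alpha,\lambda_\beta e_\beta\rangle$, using $|\lambda_\alpha|^2=|\lambda_\alpha e_\alpha|^2$ (as $|e_\alpha|=1$) for the diagonal; this is exactly the Gram matrix of $\lambda_1e_1,\lambda_2e_2,\lambda_3e_3$. Both matrices are then manifestly symmetric, so $\A_\Lambda$ and $\M_\Lambda$ are symmetric bilinear forms. Positive semidefiniteness of $\M_\Lambda$ is the standard fact that a Gram matrix is positive semidefinite: writing $P=(p_1,p_2,p_3)^T$, one has $P^TM_{\Lambda,\B}P=\|p_1\lambda_1e_1+p_2\lambda_2e_2+p_3\lambda_3e_3\|^2\geq0$.

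Finally, since $\A_\Lambda$ is symmetric, the spectral theorem furnishes an orthonormal basis $(e_1,e_2,e_3)$ of $\im(\hh)$ diagonalizing it; taking $\B=(1,e_1,e_2,e_3)$ makes $A_{\Lambda,\B}$ diagonal (orientation being irrelevant by the previous paragraph). In this basis the off-diagonal entries $-2\langle\lambda_\alpha e_\alpha,\lambda_\beta e_\beta\rangle$ must vanish, which is precisely the assertion that $\lambda_1e_1,\lambda_2e_2,\lambda_3e_3$ are mutually orthogonal; the diagonal entries then give the stated diagonal forms of $A_{\Lambda,\B}$ and $M_{\Lambda,\B}$. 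I expect no serious obstacle in this corollary, as it is essentially bookkeeping on already-established formulas; the one genuinely substantive observation is that setting $\lambda_0=0$ merges the two orientation-dependent formulas of Theorem~\ref{thm:formmatrix} and Remark~\ref{rmk:negativelyoriented} into one, together with the interpretation of $M_{\Lambda,\B}$ as a Gram matrix, which makes symmetry and positive semidefiniteness immediate.
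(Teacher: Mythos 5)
Your proposal is correct and follows exactly the route the paper intends: the corollary is stated there as an immediate consequence of Theorem~\ref{thm:formmatrix} and Remark~\ref{rmk:negativelyoriented}, obtained by setting $\lambda_0=0$ (which kills precisely the orientation-dependent terms, so both oriented formulas coincide), with the Gram-matrix interpretation giving symmetry and positive semidefiniteness and the spectral theorem handling the diagonalizing basis. No gaps.
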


We can now study the size $s(\Lambda)$ of $\Lambda\in\lin^\rr_{\mathcal{RF}}(\hh)$, defined in Subsection~\ref{subsec:regularlinearmaps} as the minimum number of nonzero $\lambda_\ell$'s when $\Lambda$ is decomposed as $\Lambda=\lambda_1\bar\vartheta_{e_1}+\lambda_2\bar\vartheta_{e_2}+\lambda_3\bar\vartheta_{e_3}$ with respect to an orthonormal basis $\B=(1,e_1,e_2,e_3)$ of $\hh$.

\begin{remark}\label{rmk:size}
If $\Lambda\in\lin^\rr_{\mathcal{RF}}(\hh)$, then $s(\Lambda)=\rank(\M_\Lambda)$. Indeed, $s(\Lambda)\leq\rank(\M_\Lambda)$ because $\B$ can be chosen so that $M_{\Lambda,\B}=\mathrm{diag}(|\lambda_1|^2,|\lambda_2|^2,|\lambda_3|^2)$. On the other hand, $s(\Lambda)\geq\rank(\M_\Lambda)$ because, by Corollary~\ref{cor:formmatrix}, the $\ell$th row and the $\ell$th column of $M_{\Lambda,\B}$ vanish when $\lambda_\ell=0$.

\end{remark}

We now wish to study which real linear maps belong to $\lin^\cc_{gg}(\hh_l,\hh_r)$. The next characterization of regularity will be useful.

\begin{lemma}\label{lem:characterizationofregularity}
Let us fix an orthonormal basis $(e_0,e_1,e_2,e_3)$ of $\hh$, with $e_0=1$. For each $\Lambda\in\lin^\rr(\hh)$, the following properties are equivalent:
\begin{enumerate}
\item $\Lambda\in\lin^\rr_{\mathcal{RF}}(\hh)$;
\item $\Lambda = - \sum_{\ell=1}^3 R_{e_\ell}\circ\Lambda\circ L_{e_\ell}$
\item $\sum_{\ell=0}^3 R_{e_\ell}\circ\Lambda\circ L_{e_\ell}=0$;
\item $\sum_{\ell=0}^3 \Lambda(e_\ell)e_\ell=0$.
\end{enumerate}
\end{lemma}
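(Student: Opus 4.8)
The plan is to collapse all four conditions to the single scalar identity $(\Lambda,cj)_l=0$, using that the quaternionic scalar product $(\cdot,\cdot)_l$ is independent of the chosen orthonormal basis. Two of the equivalences are essentially free. Since $e_0=1$ gives $R_{e_0}\circ\Lambda\circ L_{e_0}=\Lambda$, condition (3) reads $\Lambda+\sum_{\ell=1}^3 R_{e_\ell}\circ\Lambda\circ L_{e_\ell}=0$, which is verbatim condition (2); hence (2)$\Leftrightarrow$(3). Next, recalling from Subsection~\ref{subsec:regularfunctions} that $(\Lambda,cj)_l=\frac14\sum_{\ell=0}^3\Lambda(e_\ell)e_\ell$ (because $\overline{cj(e_\ell)}=\overline{\bar e_\ell}=e_\ell$), condition (1), i.e.\ $(\Lambda,cj)_l=0$, is literally condition (4); so (1)$\Leftrightarrow$(4). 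Evaluating the operator identity in (3) at $x=1$ moreover gives (3)$\Rightarrow$(4) at once.

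The crux is the reverse implication (4)$\Rightarrow$(3): vanishing of $\sum_\ell\Lambda(e_\ell x)e_\ell$ at the single vector $x=1$ must force it to vanish for every $x$. I would introduce the operator $(T\Lambda)(x):=\sum_{\ell=0}^3 R_{e_\ell}\circ\Lambda\circ L_{e_\ell}(x)=\sum_{\ell=0}^3\Lambda(e_\ell x)e_\ell$ and establish the closed formula
\[(T\Lambda)(x)=4\,(\Lambda,cj)_l\,\bar x\qquad\text{for all }x\in\hh.\]
To obtain this, fix $x\in\hh^*$ and note that $f_\ell:=e_\ell x/|x|$ is again an orthonormal basis of $\hh$, since $\langle e_m x,e_n x\rangle=\re(e_m x\bar x\bar e_n)=|x|^2\langle e_m,e_n\rangle$. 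Computing $(\Lambda,cj)_l=\frac14\sum_\ell\Lambda(f_\ell)\,\overline{cj(f_\ell)}=\frac14\sum_\ell\Lambda(f_\ell)f_\ell$ in this basis (legitimate by basis-independence of $(\cdot,\cdot)_l$) gives $(\Lambda,cj)_l=\frac1{4|x|^2}\big(\sum_\ell\Lambda(e_\ell x)e_\ell\big)x=\frac1{4|x|^2}(T\Lambda)(x)\,x$. Solving for $(T\Lambda)(x)$ and using $x^{-1}=\bar x/|x|^2$ yields the displayed formula on $\hh^*$, hence on all of $\hh$ by real-linearity of both sides.

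With the formula in hand everything closes simultaneously. Writing $c:=(\Lambda,cj)_l$, condition (1) is $c=0$; condition (3) is $T\Lambda\equiv0$, i.e.\ $4c\bar x=0$ for all $x$, i.e.\ $c=0$; condition (4) is $(T\Lambda)(1)=4c=0$, i.e.\ $c=0$; and (2)$\Leftrightarrow$(3) was already noted. Thus all four are equivalent to $c=0$, and the lemma follows. I expect the only genuinely delicate point to be the orthonormal-basis trick $f_\ell=e_\ell x/|x|$ combined with the fact that $(\cdot,\cdot)_l$ is invariant under \emph{every} orthonormal change of basis, not merely the orientation-preserving ones (the basis $f_\ell$ need not be positively oriented). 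This invariance is precisely the mechanism that lets a condition verified only at $x=1$ propagate to all of $\hh$.
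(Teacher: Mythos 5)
Your proof is correct, and the crucial implication is handled by a genuinely different mechanism than the paper's. The easy equivalences ((2)$\Leftrightarrow$(3) via $R_{e_0}\circ\Lambda\circ L_{e_0}=\Lambda$, (1)$\Leftrightarrow$(4) via $(\Lambda,cj)_l=\frac14\sum_\ell\Lambda(e_\ell)e_\ell$, and (3)$\Rightarrow$(4) by evaluating at $1$) coincide with the paper's. For the remaining implication the paper proves (4)$\Rightarrow$(2) directly on each basis vector: it rewrites $\sum_{\ell=1}^3\Lambda(e_\ell e_\alpha)e_\ell$ as $\sum_{m\neq\alpha}\Lambda(e_m)e_m\bar e_\alpha$ by re-indexing (using that each product $e_\ell e_\alpha$ is $\pm$ another basis element), then right-multiplies by $e_\alpha$ to reduce to condition (4); this is elementary and uses only the multiplication table of the orthonormal basis. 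You instead prove the universal closed formula $\sum_{\ell}R_{e_\ell}\circ\Lambda\circ L_{e_\ell}(x)=4(\Lambda,cj)_l\,\bar x$ by evaluating $(\Lambda,cj)_l$ in the rotated orthonormal basis $f_\ell=e_\ell x/|x|$ and invoking the basis-independence of $(\cdot,\cdot)_l$. Your route buys more: it identifies the operator $T$ explicitly as (four times) the orthogonal projection of $\Lambda$ onto the line $\hh\,cj=\lin^\hh_{cj}(\hh_l,\hh_r)$, from which all four conditions collapse to $(\Lambda,cj)_l=0$ at once, and it makes transparent why a condition checked only at $x=1$ propagates to all of $\hh$. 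The price is the reliance on invariance of $(\cdot,\cdot)_l$ under \emph{arbitrary} orthonormal changes of basis (the basis $(f_\ell)$ has $f_0\neq 1$ in general and need not be positively oriented); you correctly flag this, and it is exactly what the paper asserts in Subsection~\ref{subsec:linear} when defining the standard quaternionic scalar product, so no gap results. Both proofs are sound; yours is slightly less self-contained but more structural.
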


\begin{proof}
Since $R_{e_0}\circ\Lambda\circ L_{e_0}=\Lambda$, it is easy to see that property {\it 2.} implies property {\it 3}. Property {\it 3} implies $0=\sum_{\ell=0}^3 R_{e_\ell}\circ\Lambda\circ L_{e_\ell}(1)=\sum_{\ell=0}^3 \Lambda(e_\ell)e_\ell$, whence property {\it 4} follows. Property {\it 4} is equivalent to regularity, i.e., property {\it 1}, because
\[(\Lambda,cj)_l=\frac14\sum_{\ell=0}^3\Lambda(e_\ell)\overline{cj(e_\ell)}=\frac14\sum_{\ell=0}^3 \Lambda(e_\ell)e_\ell.\]

To complete the proof, we assume that property {\it 4} holds and prove property {\it 2} or, equivalently, that
\[\Lambda(e_\alpha) = - \sum_{\ell=1}^3 R_{e_\ell}\circ\Lambda\circ L_{e_\ell}(e_\alpha)\]
for all $\alpha\in\{0,1,2,3\}$. The last equality is equivalent to
\[\Lambda(e_\alpha) = - \sum_{\ell=1}^3 \Lambda(e_\ell e_\alpha)e_\ell=-\sum_{m\in\{0,1,2,3\}\setminus\{\alpha\}}\Lambda(e_m)e_m\bar e_\alpha\]
and to
\[\Lambda(e_\alpha)e_\alpha=-\sum_{m\in\{0,1,2,3\}\setminus\{\alpha\}}\Lambda(e_m)e_m\,,\]
which is true by property {\it 4}.
\end{proof}

We are now in a position to study which real linear maps belong to $\lin^\cc_{gg}(\hh_l,\hh_r)$.

\begin{theorem}\label{thm:lineargg}
Let $\Lambda\in\lin^\rr(\hh)$.
\begin{enumerate}
\item If $\Lambda\in\lin^\cc_{gg}(\hh_l,\hh_r)$ for some $g\in\s$, then $\Lambda\in\lin^\rr_{\mathcal{RF}}(\hh)$.
\end{enumerate}
Now assume $\Lambda\in\lin^\rr_{\mathcal{RF}}(\hh)$, denote by $c(\Lambda)$ the maximum number of linearly independent elements $g\in\s$ such that $\Lambda\in\lin^\cc_{gg}(\hh_l,\hh_r)$ and fix an orthonormal basis $\B=(1,e_1,e_2,e_3)$ of $\hh$. Then:
\begin{enumerate}
\item[2.] If $g=g_1e_1+g_2e_2+g_3e_3\in\s$, then
\[\Lambda\in\lin^\cc_{gg}(\hh_l,\hh_r)\Longleftrightarrow\M_\Lambda(g,g)=0\Longleftrightarrow\begin{bmatrix} g_1\\g_2\\g_3\end{bmatrix}\in\ker(M_{\Lambda,\B})\,.\]
\item[3.] $c(\Lambda)=\dim\ker(M_{\Lambda,\B})$.
\item[4.] $c(\Lambda)=3-s(\Lambda)$.
\end{enumerate}
\end{theorem}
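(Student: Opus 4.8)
The plan is to prove the four parts in sequence, with each later part drawing on the earlier ones.

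For part~1, I would use the characterization of regularity from Lemma~\ref{lem:characterizationofregularity}: a map is regular precisely when $\sum_{\ell=0}^3 R_{e_\ell}\circ\Lambda\circ L_{e_\ell}=0$ for one (equivalently, any) orthonormal basis $(1,e_1,e_2,e_3)$. Given $\Lambda\in\lin^\cc_{gg}(\hh_l,\hh_r)$, I would complete $g$ to a positively oriented orthonormal basis $(1,g,e_2,e_3)$ with $e_3=ge_2$. By Remark~\ref{rmk:characterizationofcomplexlinearity}, the hypothesis reads $R_g\circ\Lambda\circ L_g=-\Lambda$, so the terms for $\ell=0$ and $\ell=1$ already cancel, since $R_{e_0}\circ\Lambda\circ L_{e_0}=\Lambda$. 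For the remaining pair I would exploit the operator identities $L_{e_3}=L_g\circ L_{e_2}$ and $R_{e_3}=R_{e_2}\circ R_g$, which give $R_{e_3}\circ\Lambda\circ L_{e_3}=R_{e_2}\circ(R_g\circ\Lambda\circ L_g)\circ L_{e_2}=-R_{e_2}\circ\Lambda\circ L_{e_2}$; hence the terms for $\ell=2,3$ cancel as well, and the full sum vanishes.

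For part~2, the first equivalence comes from comparing two facts. On one hand, the definition of $\M_\Lambda$ and $|g|=1$ give $\M_\Lambda(g,g)=\tfrac12\tr(\A_\Lambda)-\tfrac12\A_\Lambda(g,g)$; on the other hand, the standing regularity assumption yields $\tr(\A_\Lambda)=\|\Lambda\|^2$ by Corollary~\ref{cor:matrixcharacterization}. Combining these gives $\M_\Lambda(g,g)=\tfrac12\bigl(\|\Lambda\|^2-\A_\Lambda(g,g)\bigr)$, so $\M_\Lambda(g,g)=0$ is equivalent to $\A_\Lambda(g,g)=\|\Lambda\|^2$, which by Proposition~\ref{prop:holomorphy}(1) applied with $h=g$ is equivalent to $\Lambda\in\lin^\cc_{gg}(\hh_l,\hh_r)$; the degenerate case $\Lambda=0$ is covered by the final sentence of that proposition. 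For the second equivalence, note $\M_\Lambda(g,g)=G^{T}M_{\Lambda,\B}G$ where $G=[g_1,g_2,g_3]^{T}$, and $M_{\Lambda,\B}$ is symmetric and positive semidefinite by Corollary~\ref{cor:formmatrix}; for such a matrix, writing it as $B^{T}B$ shows $G^{T}M_{\Lambda,\B}G=\|BG\|^2$, so the quadratic form vanishes exactly on $\ker(M_{\Lambda,\B})$, giving $G\in\ker(M_{\Lambda,\B})$.

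Parts~3 and~4 then follow quickly. For part~3, coordinates with respect to $(e_1,e_2,e_3)$ set up a bijection $g\leftrightarrow G$ between $\s$ and the unit vectors of $\rr^3$ that carries linear independence to linear independence; by part~2 the admissible $g$ are exactly those whose coordinate vector lies in $\ker(M_{\Lambda,\B})$, so the maximal number of linearly independent admissible units equals $\dim\ker(M_{\Lambda,\B})$, i.e.\ $c(\Lambda)=\dim\ker(M_{\Lambda,\B})$. For part~4, I would combine the rank--nullity identity $\dim\ker(M_{\Lambda,\B})=3-\rank(M_{\Lambda,\B})$ with the equality $\rank(M_{\Lambda,\B})=\rank(\M_\Lambda)=s(\Lambda)$ supplied by Remark~\ref{rmk:size}, obtaining $c(\Lambda)=3-s(\Lambda)$. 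The main obstacle is the operator bookkeeping in part~1: one must track carefully how left and right multiplications compose under the relation $e_3=ge_2$ to secure the second cancellation. Everything else reduces to the already-established dictionary relating $\A_\Lambda$, $\M_\Lambda$, their representing matrices, and complex linearity, together with the elementary fact that a positive semidefinite form vanishes precisely on its kernel.
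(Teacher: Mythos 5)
Your proposal is correct and follows essentially the same route as the paper: part~1 via the cancellation-in-pairs argument with the orthonormal basis $(1,g,h,gh)$ and Lemma~\ref{lem:characterizationofregularity}, part~2 via Corollary~\ref{cor:matrixcharacterization}, Proposition~\ref{prop:holomorphy} and the positive semidefiniteness of $\M_\Lambda$ from Corollary~\ref{cor:formmatrix}, and parts~3--4 via rank--nullity and Remark~\ref{rmk:size}. No gaps.
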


\begin{proof}
We first establish property {\it 1}. If $g\in\s$ is such that $\Lambda\in\lin^\cc_{gg}(\hh_l,\hh_r)$ and if $h\in\s$ fulfils $h\perp g$, then Remark~\ref{rmk:characterizationofcomplexlinearity} yields
\begin{align*}
\Lambda&=-R_{g}\circ\Lambda\circ L_{g}\,,\\
R_{h}\circ\Lambda\circ L_{h}&=-R_{h}\circ R_{g}\circ\Lambda\circ L_{g}\circ L_{h}=-R_{gh}\circ\Lambda\circ L_{gh}\,,
\end{align*}
whence
\[\Lambda+R_{g}\circ\Lambda\circ L_{g}+R_{h}\circ\Lambda\circ L_{h}+R_{gh}\circ\Lambda\circ L_{gh}=0\,.\]
Since $(1,g,h,gh)$ is an orthonormal basis of $\hh$, Lemma~\ref{lem:characterizationofregularity} guarantees that $\Lambda\in\lin^\rr_{\mathcal{RF}}(\hh)$.

Now let us assume $\Lambda\in\lin^\rr_{\mathcal{RF}}(\hh)$ and prove properties {\it 2}, {\it 3}, {\it 4}. Corollary~\ref{cor:matrixcharacterization} yields that
\[\M_\Lambda(g,g)=\frac12\Vert\Lambda\Vert^2\langle g,g\rangle-\frac12\A_\Lambda(g,g)=\frac12\left(\Vert\Lambda\Vert^2-\A_\Lambda(g,g)\right)\,.\]
By Proposition~\ref{prop:holomorphy}, the last expression vanishes if, and only if, $\Lambda\in\lin^\cc_{gg}(\hh_l,\hh_r)$. Furthermore, Corollary~\ref{cor:formmatrix} tells us that $\M_{\Lambda}$ is positive semidefinite. Thus, $\M_{\Lambda}(g,g)=0$ is equivalent to $\begin{bmatrix} g_1&g_2&g_3\end{bmatrix}^T\in\ker(M_{\Lambda,\B})$. In particular,
\[c(\Lambda)=\dim\ker(M_{\Lambda,\B})=3-\rank(\M_\Lambda)=3-s(\Lambda)\,,\]
where the last equality follows from Remark~\ref{rmk:size}.
\end{proof}

About the last theorem, a bibliographic comment is in order: in~\cite[\S5]{tarallo}, property {\it 1} was proven with a different technique and property {\it 4} was stated without proof.

After setting the notation
\[\lin^\rr_{\mathcal{RF}}(\hh)_t:=\{\Lambda\in\lin^\rr_{\mathcal{RF}}(\hh): s(\Lambda)\leq t\}\]
for $t\in\{0,1,2,3\}$, we can write down explicit decompositions:
\begin{itemize}
\item $\lin^\rr_{\mathcal{RF}}(\hh)_2=\bigcup_{g\in\s}\lin^\cc_{gg}(\hh_l,\hh_r)$;
\item $\lin^\rr_{\mathcal{RF}}(\hh)_1=\bigcup_{g,g'\in\s,g'\neq\pm g}\lin^\cc_{gg}(\hh_l,\hh_r)\cap\lin^\cc_{g'g'}(\hh_l,\hh_r)$;
\item $\lin^\rr_{\mathcal{RF}}(\hh)_0=\{0\}=\bigcup_{g,g',g''\in\s,\langle g, g'\times g''\rangle\neq0}\lin^\cc_{gg}(\hh_l,\hh_r)\cap\lin^\cc_{g'g'}(\hh_l,\hh_r)\cap\lin^\cc_{g''g''}(\hh_l,\hh_r)$.
\end{itemize}

Complex linearity of regular real linear maps can be further studied, as follows.

\begin{theorem}\label{thm:classificationlinear}
Let $\Lambda\in\lin^\rr_{\mathcal{RF}}(\hh)$. One of the following facts is true.
\begin{enumerate}
\item $s(\Lambda)=3$. For all $g,h\in\s$, $\Lambda\not\in\lin^\cc_{gh}(\hh_l,\hh_r)$. 
\item $s(\Lambda)=2$. There exists a unique $\hat g\in\s$ such that $\Lambda\in\lin^\cc_{gh}(\hh_l,\hh_r)\Longleftrightarrow(g,h)\in\{(\hat g,\hat g),(-\hat g,-\hat g)\}$. When $\hat g$ is completed to any orthogonal basis $(e_1,e_2,\hat g)$ of $\im(\hh)$, $\Lambda$ can be represented as $\Lambda=\lambda_1\bar\vartheta_{e_1}+\lambda_2\bar\vartheta_{e_2}$ for some nonzero $\lambda_1,\lambda_2\in\hh$.
\item $s(\Lambda)=1$. For all $g\in\s$, there exists a unique $h\in\s$ such that $\lin^\cc_{gh}(\hh_l,\hh_r)$. Namely, if $\Lambda=\lambda_1\bar\vartheta_{e_1}$ for some $\lambda_1\in\hh\setminus\{0\},e_1\in\s$, then $h=\bar\vartheta_{e_1}(g)$ (which equals $g$ if, and only if, $g\perp e_1$).
\item $s(\Lambda)=0$. $\Lambda\equiv0$ is included in $\lin^\cc_{gh}(\hh_l,\hh_r)$ for all $g,h\in\s$.
\end{enumerate}
\end{theorem}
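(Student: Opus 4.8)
The plan is to fix, once and for all, an orthonormal basis adapted to $\Lambda$ and to read off everything from the diagonal form of the matrices $A_{\Lambda,\B}$ and $M_{\Lambda,\B}$. The case $s(\Lambda)=0$ is immediate from Example~\ref{ex:size0} (so $\Lambda\equiv0$) together with the final clause of Proposition~\ref{prop:holomorphy}. So I would assume $\Lambda\neq0$ and choose, as in Corollary~\ref{cor:formmatrix}, an orthonormal basis $\B=(1,e_1,e_2,e_3)$ for which $A_{\Lambda,\B}=\mathrm{diag}(a_1,a_2,a_3)$ and $M_{\Lambda,\B}=\mathrm{diag}(t_1,t_2,t_3)$ are diagonal, where $t_\ell=|\lambda_\ell|^2$, $a_\ell=\|\Lambda\|^2-2t_\ell$ and $\|\Lambda\|^2=t_1+t_2+t_3$. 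By Remark~\ref{rmk:size} the number of nonzero $t_\ell$ equals $s(\Lambda)$.

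The engine of the proof is the quadratic form $\Q_\Lambda$. By Remark~\ref{rmk:quadraticform} its matrix is $A_{\Lambda,\B}^TA_{\Lambda,\B}=\mathrm{diag}(a_1^2,a_2^2,a_3^2)$, so its eigenvalues are $a_1^2,a_2^2,a_3^2$. Since $0\le t_\ell\le\|\Lambda\|^2$, we get $|a_\ell|\le\|\Lambda\|^2$, with $a_\ell^2=\|\Lambda\|^4$ precisely when $t_\ell=0$ or $t_\ell=\|\Lambda\|^2$. Proposition~\ref{prop:holomorphy} then converts complex linearity into eigenvalue bookkeeping: there exist $g,h$ with $\Lambda\in\lin^\cc_{gh}(\hh_l,\hh_r)$ if and only if $\max_\ell a_\ell^2=\|\Lambda\|^4$; and for each such $g$, which must be a unit eigenvector of $\Q_\Lambda$ for the eigenvalue $\|\Lambda\|^4$, the partner $h$ is determined uniquely by $\L_\Lambda(g)=\|\Lambda\|^2h$, i.e. $A_{\Lambda,\B}g=\|\Lambda\|^2h$.

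The four cases then reduce to counting how many $t_\ell$ vanish. If $s(\Lambda)=3$, every $t_\ell$ lies strictly between $0$ and $\|\Lambda\|^2$, so $\max_\ell a_\ell^2<\|\Lambda\|^4$ and no admissible pair exists. If $s(\Lambda)=2$, exactly one $t_\ell$ vanishes, say $t_3=0$; then $a_3^2=\|\Lambda\|^4$ while $a_1^2,a_2^2<\|\Lambda\|^4$, so $\|\Lambda\|^4$ is a simple eigenvalue with eigenline $\rr e_3$, forcing $\hat g:=e_3$ (unique up to sign) and, via $A_{\Lambda,\B}e_3=\|\Lambda\|^2e_3$, the partner $h=\hat g$; combined with $\lin^\cc_{gh}=\lin^\cc_{(-g)(-h)}$ this yields exactly the pairs $(\pm\hat g,\pm\hat g)$. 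If $s(\Lambda)=1$, two $t_\ell$ vanish and the third equals $\|\Lambda\|^2$, so $\Q_\Lambda\equiv\|\Lambda\|^4\,\mathrm{Id}$ and every $g\in\s$ is admissible; writing the single nonzero term as $\lambda_1\bar\vartheta_{e_1}$ and using $\lin^\hh_{\bar\vartheta_{e_1}}(\hh_l,\hh_r)=\{\lambda\bar\vartheta_{e_1}:\lambda\in\hh\}\subseteq\lin^\cc_{g\,\bar\vartheta_{e_1}(g)}(\hh_l,\hh_r)$ identifies the unique $h$ as $\bar\vartheta_{e_1}(g)$.

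Two finishing points remain, and the first is the only genuinely delicate step. For $s(\Lambda)=2$ I must show $\Lambda=\lambda_1\bar\vartheta_{e_1}+\lambda_2\bar\vartheta_{e_2}$ with $\lambda_1,\lambda_2\neq0$ in \emph{every} orthogonal completion $(e_1,e_2,\hat g)$ of $\hat g$, not merely the diagonalizing one. The key is that $\Lambda\in\lin^\cc_{\hat g\hat g}(\hh_l,\hh_r)$ gives $\M_\Lambda(\hat g,\hat g)=0$ by Theorem~\ref{thm:lineargg}; since $M_{\Lambda,\B}$ is the Gram matrix of $\lambda_1e_1,\lambda_2e_2,\lambda_3\hat g$ (Corollary~\ref{cor:formmatrix}), its $(3,3)$ entry $|\lambda_3|^2$ equals $\M_\Lambda(\hat g,\hat g)=0$, so $\lambda_3=0$, and $\rank M_{\Lambda,\B}=s(\Lambda)=2$ then forces $\lambda_1,\lambda_2\neq0$. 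For $s(\Lambda)=1$ I close by computing $\bar\vartheta_{e_1}(g)=e_1ge_1$ on the basis, so that $\bar\vartheta_{e_1}(g)=g\iff e_1g+ge_1=0\iff\langle e_1,g\rangle=0$, i.e. $h=g$ exactly when $g\perp e_1$. The main obstacle is thus the basis-independence in case 2; everything else is the diagonal eigenvalue count.
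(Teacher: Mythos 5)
Your proof is correct and follows essentially the same route as the paper's: diagonalize $A_{\Lambda,\B}$ as in Corollary~\ref{cor:formmatrix}, translate complex linearity into the eigenvalue condition on $\Q_\Lambda$ via Proposition~\ref{prop:holomorphy}, and sort the cases by counting the vanishing diagonal entries of $M_{\Lambda,\B}$. The one place where you go beyond the paper's own argument is the basis-independence claim in case~2 (that the coefficient of $\bar\vartheta_{\hat g}$ vanishes in \emph{every} orthonormal completion of $\hat g$, not just the diagonalizing one): the paper exhibits the representation only in the diagonalizing basis, whereas your Gram-matrix argument --- $\M_\Lambda(\hat g,\hat g)=0$ from Theorem~\ref{thm:lineargg} forces $\lambda_3=0$, and $\rank(M_{\Lambda,\B})=s(\Lambda)=2$ then forces $\lambda_1,\lambda_2\neq0$ --- correctly establishes the stronger statement as actually written in the theorem.
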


\begin{proof}
Let us fix an orthonormal basis $\B:=(e_0,e_1,e_2,e_3)$ of $\hh$, with $e_0=1$ and such that, for appropriate $\lambda_1,\lambda_2,\lambda_3\in\hh$,
\[A=A_{\Lambda,\B}=\mathrm{diag}\left(-|\lambda_1|^2+|\lambda_2|^2+|\lambda_3|^2,|\lambda_1|^2-|\lambda_2|^2+|\lambda_3|^2,|\lambda_1|^2+|\lambda_2|^2-|\lambda_3|^2\right)\,,\]
whence
\[A^TA=\mathrm{diag}\left(\left(-|\lambda_1|^2+|\lambda_2|^2+|\lambda_3|^2\right)^2,\left(|\lambda_1|^2-|\lambda_2|^2+|\lambda_3|^2\right)^2,\left(|\lambda_1|^2+|\lambda_2|^2-|\lambda_3|^2\right)^2\right)\,.\]
Set $\widehat{\B}:=(e_1,e_2,e_3)$. We separate several cases.
\begin{enumerate}
\item If $s(\Lambda)=3$, then $\lambda_m\neq0$ for all $m\in\{1,2,3\}$. Clearly, in such a case each eigenvalue of $A^TA$ is strictly less than $(|\lambda_1|^2+|\lambda_2|^2+|\lambda_3|^2)^2=\Vert\Lambda\Vert^4$. Since $A^TA$ is the matrix associated to $Q_\Lambda$ with respect to $\widehat{\B}$, it follows that  the maximum eigenvalue of $\Q_\Lambda$ is strictly less than $\Vert\Lambda\Vert^4$. By Proposition~\ref{prop:holomorphy}, there exist no $g,h\in\s$ such that $\Lambda\in\lin^\cc_{gh}(\hh_l,\hh_r)$.
\item If $s(\Lambda)=2$, then $\lambda_m=0$ for exactly one $m\in\{1,2,3\}$. Without loss of generality, we assume $\lambda_1,\lambda_2\neq0,\lambda_3=0$. In this situation,
\[A^TA=\mathrm{diag}\left(\left(-|\lambda_1|^2+|\lambda_2|^2\right)^2,\left(|\lambda_1|^2-|\lambda_2|^2\right)^2,\left(|\lambda_1|^2+|\lambda_2|^2\right)^2\right)\]
has two eigenvalues that are less than $\Vert\Lambda\Vert^4=(|\lambda_1|^2+|\lambda_2|^2)^2$ and a third eigenvalue that equals $\Vert\Lambda\Vert^4$. More precisely, $\Q_\Lambda(g)=\Vert\Lambda\Vert^4$ for $g\in\s$ if, and only if, $g=\pm e_3$. Moreover, since
\[A=\mathrm{diag}\left(-|\lambda_1|^2+|\lambda_2|^2,|\lambda_1|^2-|\lambda_2|^2,|\lambda_1|^2+|\lambda_2|^2\right)\]
is the matrix associated to $\L_\Lambda$ with respect to $\widehat{\B},\widehat{\B}$, we have $\L_\Lambda(\pm e_3)=\pm \Vert\Lambda\Vert^2 e_3$. By Proposition~\ref{prop:holomorphy}, the inclusion $\Lambda\in\lin^\cc_{gh}(\hh_l,\hh_r)$ holds if, and only if, $g=h=\pm e_3$.
\item If $s(\Lambda)=1$, then $\lambda_m=0$ for exactly two choices of $m\in\{1,2,3\}$. Without loss of generality, we assume $\lambda_1\neq0,\lambda_2=\lambda_3=0$. In this situation,
\[A^TA=|\lambda_1|^4I_3\,,\]
whence $\Q_\Lambda(g)=\Vert\Lambda\Vert^4$ for all $g\in\s$. Moreover, since
\[A=\mathrm{diag}\left(-|\lambda_1|^2,|\lambda_1|^2,|\lambda_1|^2\right)\]
is the matrix associated to $\L_\Lambda$ with respect to $\widehat{\B},\widehat{\B}$, we conclude that $\L_\Lambda=\Vert\Lambda\Vert^2\bar\vartheta_{e_1}$. By Proposition~\ref{prop:holomorphy}, the inclusion $\Lambda\in\lin^\cc_{gh}(\hh_l,\hh_r)$ holds if, and only if, $g\in\s$ and $h=\bar\vartheta_{e_1}(g)$.\qedhere
\end{enumerate}
\end{proof}

\begin{example}
Let us go through Example~\ref{ex:size1} again. For $g\in\s$, the size $1$ map $\bar\vartheta_i(z_1+z_2j)=\bar z_1+j\bar z_2$ belongs to $\lin^\cc_{gh}(\hh_l,\hh_r)$ if, and only if $h=\bar\vartheta_i(g)$; the equality $\bar\vartheta_i(g)=g$ holds if, and only if, $g=\cos(\alpha)j+\sin(\alpha)k$ for some $\alpha\in\rr$. Similar considerations apply to $\bar\vartheta_j$ and $\bar\vartheta_k$.
\end{example}

\begin{example}
Let us go through Example~\ref{ex:size2} again. The map $\Lambda_i=-\frac{i}2\bar\vartheta_j-\frac{i}2\bar\vartheta_k$, corresponding to the Fueter variable $\zeta_1=x_1-ix_0$, has size $2$. Now we also know that $\Lambda_i\in\lin^\cc_{gh}(\hh_l,\hh_r)$ exactly when $g=h=\pm i$. For the remaining Fueter variables, we similarly remark that $\Lambda_j\in\lin^\cc_{gh}(\hh_l,\hh_r)$ exactly when $g=h=\pm j$ and $\Lambda_k\in\lin^\cc_{gh}(\hh_l,\hh_r)$ exactly when $g=h=\pm k$.
\end{example}

\begin{remark}
For all $\widehat{g}\in\s$, the map $\Lambda_{\widehat{g}}\in\lin^\rr(\hh)$ defined by the formula $\Lambda_{\widehat{g}}(x):=\langle\widehat{g},x\rangle-\widehat{g}\langle 1,x\rangle$ is an element of $\lin^\rr_{\mathcal{RF}}(\hh)$ of size $2$. It belongs to $\lin^\cc_{gh}(\hh_l,\hh_r)$ exactly when $g=h=\pm\widehat{g}$.
\end{remark}

\begin{example}
We saw in Example~\ref{ex:size3} that $\Lambda(z_1+z_2j)=2z_1+\bar z_1+j\bar z_2$ is an element of $\lin^\rr_{\mathcal{RF}}(\hh)$ having size $s(\Lambda)=3$. Thus, there exist no $g,h\in\s$ such that $\Lambda\in\lin^\cc_{gh}(\hh_l,\hh_r)$.
\end{example}

We conclude this section with two more useful results about regular real linear maps, which require some preparation.

\begin{definition}\label{def:jacobian}
Fix an orthonormal basis $\B=(e_0,e_1,e_2,e_3)$ of $\hh$ with $e_0=1,e_1e_2=e_3$.

For $\Lambda\in\lin^\rr(\hh)$, we let $\jac_\B^{lr}(\Lambda)$ denote the $4\times4$ matrix with $\cc_{e_1}$ entries associated to $\Lambda$ with respect to the left $\cc_{e_1}$-basis $(e_0,e_2)$ of $\hh=\cc_{e_1}+\cc_{e_1}e_2$ and the right $\cc_{e_1}$-basis $(e_0,e_2)$ of $\hh=\cc_{e_1}+e_2\cc_{e_1}$. Equivalently, if $\Lambda(z_1+z_2e_2)=f_1(z_1,z_2)+e_2f_2(z_1,z_2)$ for $z_1,z_2\in\cc_{e_1}$, then
\begin{align*}
\jac_\B^{lr}(\Lambda)&=\left[\frac{\partial(f_1 f_2 \bar f_1 \bar f_2)}{\partial(z_1 z_2 \bar z_1 \bar z_2)}\right]:=
\begin{bmatrix}
\dfrac{\partial f_1}{\partial z_1}&\dfrac{\partial f_1}{\partial z_2}&\dfrac{\partial f_1}{\partial \bar z_1}&\dfrac{\partial f_1}{\partial \bar z_2} \\[1em]
\dfrac{\partial f_2}{\partial z_1}&\dfrac{\partial f_2}{\partial z_2}&\dfrac{\partial f_2}{\partial \bar z_1}&\dfrac{\partial f_2}{\partial \bar z_2}\\[1em]
\dfrac{\partial \bar f_1}{\partial z_1}&\dfrac{\partial \bar f_1}{\partial z_2}&\dfrac{\partial \bar f_1}{\partial \bar z_1}&\dfrac{\partial \bar f_1}{\partial \bar z_2}\\[1em]
\dfrac{\partial \bar f_2}{\partial z_1}&\dfrac{\partial\bar f_2}{\partial z_2}&\dfrac{\partial\bar f_2}{\partial \bar z_1}&\dfrac{\partial\bar f_2}{\partial \bar z_2}
\end{bmatrix}\,.
\end{align*}

For $\underline{\Lambda}\in\lin^\rr(\hh)$, we let $\jac_\B^{rl}(\underline{\Lambda})$ denote the $4\times4$ matrix with $\cc_{e_1}$ entries associated to $\underline{\Lambda}$ with respect to the right $\cc_{e_1}$-basis $(e_0,e_2)$ of $\hh=\cc_{e_1}+e_2\cc_{e_1}$ and to the left $\cc_{e_1}$-basis $(e_0,e_2)$ of $\hh=\cc_{e_1}+\cc_{e_1}e_2$. Equivalently, if $\underline{\Lambda}(w_1+e_2w_2)=g_1(w_1,w_2)+g_2(w_1,w_2)e_2$ for $w_1,w_2\in\cc_{e_1}$, then
\[\jac_\B^{rl}(\underline\Lambda)=\left[\frac{\partial(g_1 g_2 \bar g_1 \bar g_2)}{\partial(w_1 w_2 \bar w_1 \bar w_2)}\right]\,.\]
\end{definition}

We recall the following definition and property, see~\cite[\S26.1]{librogarrett}.

\begin{definition}\label{def:adjugatematrix}
Let $A$ be a square matrix over any field $\mathbb{F}$. For any indices $i,j$, let $A_{ij}$ denote the $3\times3$ matrix obtained from $A$ by erasing its $i^{\mathrm{th}}$ row and $j^{\mathrm{th}}$ column.
The \emph{adjugate matrix} of $A$ is the matrix $A^{\mathrm{adjg}}$ whose $(i,j)$-entry is $(-1)^{i+j}\det(A_{ji})$.
\end{definition}

\begin{remark}\label{rmk:adjugatematrix}
Let $A$ be an $n\times n$ matrix over any field $\mathbb{F}$. Then the product $A\,A^{\mathrm{adjg}}$ equals $\det_{\mathbb{F}}(A)$ times the $n\times n$ identity matrix. If, moreover, $A$ is invertible, then
\[A^{-1}=\left(\textstyle\det{_\mathbb{F}}(A)\right)^{-1}A^{\mathrm{adjg}}\,.\]
\end{remark}

We give one more definition and make a remark.

\begin{definition}\label{def:adjugate}
Let $\Lambda,\underline{\Lambda}\in\lin^\rr(\hh)$. If $\jac_\B^{rl}(\underline{\Lambda})$ is the adjugate matrix to $\jac_\B^{lr}(\Lambda)$, then we call $\underline{\Lambda}$ the \emph{adjugate} of $\Lambda$ and denote it by the symbol $\Lambda^{\mathrm{adjg}}$.
\end{definition}

\begin{remark}\label{rmk:adjugate}
Let $\Lambda\in\lin^\rr(\hh)$ and let $\det(\Lambda)$ denote the real determinant of $\Lambda$. As a consequence of Remark~\ref{rmk:adjugatematrix}, the equality $\Lambda\circ\Lambda^{\mathrm{adjg}}(x)=\det(\Lambda)\,x$ holds true for all $x\in\hh$. If $\rank(\Lambda)=4$, we conclude that $\Lambda^{-1}=\frac1{\det(\Lambda)}\Lambda^{\mathrm{adjg}}$. If $\rank(\Lambda)\leq2$, we remark instead that $\Lambda^{\mathrm{adjg}}=0$: indeed, if the matrix $\jac_\B^{lr}(\Lambda)$ has rank $2$ at most, then its adjugate matrix is the zero matrix. 
\end{remark}

We are now ready to state the following theorem, whose proof we postpone to the appendix.

\begin{theorem}\label{thm:adjugate}
If $\Lambda\in\lin^\rr_{\mathcal{RF}}(\hh)$, then
\[\det(\M_\Lambda)=\frac14 |\underline{\lambda}_0|^2\,,\]
where $\underline{\lambda}_0=(cj,\Lambda^{\mathrm{adjg}})_r$ or, equivalently, where $\Lambda^{\mathrm{adjg}}=\sum_{m=0}^3\bar\vartheta_{v_m}\underline{\lambda}_m$ with respect to any orthogonal basis $(v_0,v_1,v_2,v_3)$ of $\hh$ with $v_0=1$.
\end{theorem}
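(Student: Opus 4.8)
The plan is to prove the identity first for invertible $\Lambda$ and then extend it to all of $\lin^\rr_{\mathcal{RF}}(\hh)$ by continuity. Both sides are polynomial (hence continuous) in $\Lambda$: the left-hand side because $\M_\Lambda$ is built from $\A_\Lambda$, and the right-hand side because the defining relation $\jac_\B^{rl}(\Lambda^{\mathrm{adjg}})=(\jac_\B^{lr}(\Lambda))^{\mathrm{adjg}}$ depends polynomially on the entries of $\jac_\B^{lr}(\Lambda)$, which are in turn linear in $\Lambda$; recovering $\Lambda^{\mathrm{adjg}}$ from its Jacobian is linear, so $\underline{\lambda}_0=(cj,\Lambda^{\mathrm{adjg}})_r$ is continuous in $\Lambda$. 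Since the rank-$4$ maps form a dense subset of $\lin^\rr_{\mathcal{RF}}(\hh)$ (the conformal size-$1$ maps of Remark~\ref{rmk:size1isconformal} are invertible, so $\det$ is not identically zero on this space), it suffices to treat invertible $\Lambda$, for which Remark~\ref{rmk:adjugate} gives $\Lambda^{\mathrm{adjg}}=\det(\Lambda)\,\Lambda^{-1}$.

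Next I would normalize. By Corollary~\ref{cor:formmatrix} I fix an orthonormal basis $\B=(1,e_1,e_2,e_3)$ in which $A_{\Lambda,\B}$ is diagonal; then $M_{\Lambda,\B}=\mathrm{diag}(|\lambda_1|^2,|\lambda_2|^2,|\lambda_3|^2)$ and $\lambda_1e_1,\lambda_2e_2,\lambda_3e_3$ are mutually orthogonal, so $\det(\M_\Lambda)=|\lambda_1|^2|\lambda_2|^2|\lambda_3|^2$ (a number independent of the basis). I would then exploit a symmetry: for a unit $u\in\hh$ one checks $\M_{L_u\circ\Lambda}=\M_\Lambda$ (since $L_u$ is orthogonal and commutes with every $R_p$) and, for invertible $\Lambda$, $(L_u\circ\Lambda)^{\mathrm{adjg}}=\Lambda^{\mathrm{adjg}}\circ L_{\bar u}$, whence $\underline{\lambda}_0(L_u\circ\Lambda)=u\,\underline{\lambda}_0(\Lambda)$. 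Thus both $\det(\M_\Lambda)$ and $|\underline{\lambda}_0|$ are unchanged, and left-multiplying by $u=\bar\lambda_1/|\lambda_1|$ lets me assume $e_1=i,e_2=j,e_3=k$ and $\lambda_1=r_1\ge0$ real. The goal then reduces to showing $|\underline{\lambda}_0|=2|\lambda_1||\lambda_2||\lambda_3|$.

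Now I would compute explicitly. Writing $\Lambda(z_1+z_2j)=f_1+jf_2$ with the formulas for $\bar\vartheta_i,\bar\vartheta_j,\bar\vartheta_k$ from Example~\ref{ex:size1} and $\lambda_m=a_m+jb_m$ with $a_m,b_m\in\cc_i$, I read off the complex Jacobian of Definition~\ref{def:jacobian} in block form $\jac_\B^{lr}(\Lambda)=\left[\begin{smallmatrix}P&Q\\\bar Q&\bar P\end{smallmatrix}\right]$. The normalization $\lambda_1\in\rr$ makes the antiholomorphic block scalar, $Q=r_1I_2$, while $P$ is an explicit $2\times2$ matrix in $a_2,b_2,a_3,b_3$ that the orthogonality relations simplify further. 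Because $Q$ is scalar, $\det\jac_\B^{lr}(\Lambda)=\det(P\bar P-r_1^2I_2)$ and the entries of the adjugate are governed by $2\times2$ Schur complements. Reading $(\jac_\B^{lr}(\Lambda))^{\mathrm{adjg}}$ as $\jac_\B^{rl}(\Lambda^{\mathrm{adjg}})$ gives $\Lambda^{\mathrm{adjg}}(w_1+jw_2)=g_1+g_2j$, and then $\underline{\lambda}_0=(cj,\Lambda^{\mathrm{adjg}})_r=\frac14\big(\Lambda^{\mathrm{adjg}}(1)+i\,\Lambda^{\mathrm{adjg}}(i)+j\,\Lambda^{\mathrm{adjg}}(j)+k\,\Lambda^{\mathrm{adjg}}(k)\big)$. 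In the fully real model $\lambda_m=r_m$ this evaluation collapses to $\underline{\lambda}_0=2r_1r_2r_3$, so that $\frac14|\underline{\lambda}_0|^2=\det(\M_\Lambda)$; the general reduced case follows the same pattern.

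The main obstacle is precisely this last computation: carrying the $4\times4$ adjugate through and translating cleanly between the left $lr$-convention used for $\Lambda$ and the right $rl$-convention used for $\Lambda^{\mathrm{adjg}}$, all while tracking the noncommutative products $a_mj=j\bar a_m$. The reductions above are what keep it tractable: passing to invertible $\Lambda$ legitimizes $\Lambda^{\mathrm{adjg}}=\det(\Lambda)\Lambda^{-1}$ together with the density argument; diagonalization pins down $\det(\M_\Lambda)$; and the normalization $\lambda_1\in\rr$ turns $Q$ into a scalar block, reducing the adjugate to $2\times2$ algebra. No separate analysis of the size-$\le2$ (rank-deficient) maps is then needed, since both sides vanish there and the identity is recovered by continuity.
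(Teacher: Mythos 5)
Your reductions are all sound and, as a framing, genuinely different from the paper's: the paper never passes to invertible maps (it works with the adjugate matrix directly, which is defined for any rank), and it does not use your $L_u$-symmetry. Your density-plus-continuity step is legitimate (both sides are polynomial in $\Lambda$, and the invertible maps are dense in $\lin^\rr_{\mathcal{RF}}(\hh)$ since $\det$ is a not-identically-zero polynomial there); the identities $\M_{L_u\circ\Lambda}=\M_\Lambda$ and $\underline{\lambda}_0(L_u\circ\Lambda)=u\,\underline{\lambda}_0(\Lambda)$ check out; and with $\lambda_1=r_1\in\rr$ the antiholomorphic block of $\jac_\B^{lr}(\Lambda)$ is indeed $r_1I_2$. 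Your sanity check $\underline{\lambda}_0=2r_1r_2r_3$ in the all-real model is also correct.

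The gap is that the proof stops exactly where the theorem lives. After your normalization, $\lambda_2,\lambda_3$ are still arbitrary quaternions constrained only by the orthogonality relations that encode diagonality of $M_{\Lambda,\B}$, and the identity you must establish is, in the coordinates of Lemma~\ref{lem:complexifiedLambda1} with $c=(r_1,0)$,
\[
\bigl(\Vert a\Vert_H^2-\Vert b\Vert_H^2\bigr)^2+4\,|a_1b_2-a_2b_1|^2=\Vert a+b\Vert_H^2\,\Vert a-b\Vert_H^2\,.
\]
This is \emph{false} for general $a,b\in\cc_{e_1}^2$: by the Lagrange identity the left side equals $(\Vert a\Vert_H^2+\Vert b\Vert_H^2)^2-4|(a,b)_H|^2$ while the right side equals $(\Vert a\Vert_H^2+\Vert b\Vert_H^2)^2-4\re^2(a,b)_H$, so it holds precisely because $(a,b)_H=a_1\bar b_1+a_2\bar b_2$ is real --- which is one of the two identities the paper extracts from the diagonality of $M_{\Lambda,\B}$ in Lemma~\ref{lem:identitiesdiagonal} (the other identity is also needed to simplify the cofactors before this point). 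Your ``fully real model'' satisfies these constraints trivially, so checking it cannot reveal that they are needed, and ``the general reduced case follows the same pattern'' is therefore an assertion, not an argument. To close the proof you must carry out the cofactor computation of $\det(A_{31})-\overline{\det(A_{42})}$ and $\det(A_{32})+\overline{\det(A_{41})}$ for general $a,b$ and invoke the orthogonality identities at the right moments --- which is precisely the page of algebra the paper's appendix performs.
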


Theorem~\ref{thm:adjugate} has the following consequence.

\begin{corollary}\label{cor:adjugate}
For $\Lambda\in\lin^\rr_{\mathcal{RF}}(\hh)$, the following properties are equivalent:
\begin{enumerate}
\item There exist $g,h\in\s$ such that $\Lambda\in\lin^\cc_{gh}(\hh_l,\hh_r)$.
\item $s(\Lambda)\leq2$.
\item $\det(\M_\Lambda)=0$.
\item $\Lambda^{\mathrm{adjg}}\in\lin^\rr_{\mathcal{LF}}(\hh)$.
\end{enumerate}
These properties automatically hold true if $\rank(\Lambda)\leq2$, which can only happen if $\rank(\Lambda)\in\{0,2\}$. They are false if $\rank(\Lambda)=3$. When, instead, $\rank(\Lambda)=4$, then the aforementioned properties hold true if, and only if, $\Lambda$ is RL-biregular (in which case, $\Lambda$ is automatically orientation-reversing).
\end{corollary}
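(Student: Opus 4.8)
The plan is to prove the four equivalences first and then dispatch the three rank regimes, in each case merely assembling results already available. For the equivalences I would argue the chain $1\Leftrightarrow2\Leftrightarrow3\Leftrightarrow4$. The equivalence $1\Leftrightarrow2$ is read off Theorem~\ref{thm:classificationlinear}: there exist $g,h\in\s$ with $\Lambda\in\lin^\cc_{gh}(\hh_l,\hh_r)$ exactly when $s(\Lambda)\in\{0,1,2\}$, whereas $s(\Lambda)=3$ forbids any such pair. For $2\Leftrightarrow3$ I would use Remark~\ref{rmk:size}, which identifies $s(\Lambda)=\rank(\M_\Lambda)$; since $\M_\Lambda$ is represented by the $3\times3$ matrix $M_{\Lambda,\B}$, the bound $s(\Lambda)\leq2$ is the same as $\rank(\M_\Lambda)\leq2$, i.e.\ $\det(\M_\Lambda)=0$. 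Finally, $3\Leftrightarrow4$ is precisely the content of Theorem~\ref{thm:adjugate}, which gives $\det(\M_\Lambda)=\frac14|\underline{\lambda}_0|^2$ with $\underline{\lambda}_0=(cj,\Lambda^{\mathrm{adjg}})_r$; hence $\det(\M_\Lambda)=0$ is equivalent to $\underline{\lambda}_0=0$, which by definition of $\lin^\rr_{\mathcal{LF}}(\hh)$ is the statement $\Lambda^{\mathrm{adjg}}\in\lin^\rr_{\mathcal{LF}}(\hh)$.

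Next I would treat the two low-rank regimes. If $\rank(\Lambda)\leq2$, then Remark~\ref{rmk:adjugate} gives $\Lambda^{\mathrm{adjg}}=0$, and since $0\in\lin^\rr_{\mathcal{LF}}(\hh)$ property \emph{4} holds, hence so do all four. To see that the rank can only be $0$ or $2$ here, I would invoke property \emph{1} (now known to hold): $\Lambda\in\lin^\cc_{gh}(\hh_l,\hh_r)$ for some $g,h\in\s$, and a complex linear map between spaces of finite real dimension has even rank, so $\rank(\Lambda)\in\{0,2\}$. Conversely, if $\rank(\Lambda)=3$, the four properties must all fail: were any of them to hold, property \emph{1} would again force $\rank(\Lambda)$ to be even, contradicting its oddness. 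This parity argument is what cleanly separates the $\rank=3$ case from the $\rank=4$ case.

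For the full-rank case $\rank(\Lambda)=4$, I would use that $\det(\Lambda)\in\rr\setminus\{0\}$ and that, by Remark~\ref{rmk:adjugate}, $\Lambda^{\mathrm{adjg}}=\det(\Lambda)\,\Lambda^{-1}$. Because $\lin^\rr_{\mathcal{LF}}(\hh)$ is a real vector subspace of $\lin^\rr(\hh)$, membership is unaffected by the nonzero real scaling, so property \emph{4}, namely $\Lambda^{\mathrm{adjg}}\in\lin^\rr_{\mathcal{LF}}(\hh)$, is equivalent to $\Lambda^{-1}\in\lin^\rr_{\mathcal{LF}}(\hh)$, i.e.\ to $\Lambda^{-1}$ being left Fueter-regular. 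Since $\Lambda$ is right Fueter-regular by hypothesis and bijective, this is exactly RL-biregularity. The orientation assertion then follows from property \emph{1}: when it holds, $\Lambda\in\lin^\cc_{gh}(\hh_l,\hh_r)$ with $\dim_\rr(\hh)=\rank(\Lambda)=4$, and the full-rank orientation property recorded in Subsection~\ref{subsec:linear} forces $\Lambda$ to reverse orientation.

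The corollary itself carries no serious obstacle: the genuine analytic content sits in Theorem~\ref{thm:adjugate}, whose proof is deferred to the appendix, and every other ingredient is a direct citation. The only points demanding care are the bookkeeping facts that $\M_\Lambda$ is $3\times3$ (so that a rank bound of two is a vanishing determinant), that $\det(\Lambda)$ is a nonzero \emph{real} scalar in the full-rank case (so that scaling preserves membership in $\lin^\rr_{\mathcal{LF}}(\hh)$), and the even-rank property of complex linear maps, which underlies both the $\rank\in\{0,2\}$ dichotomy and the failure of the properties when $\rank(\Lambda)=3$.
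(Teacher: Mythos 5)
Your proposal is correct and follows essentially the same route as the paper's proof: the chain $1\Leftrightarrow2\Leftrightarrow3\Leftrightarrow4$ via Theorem~\ref{thm:classificationlinear}, Remark~\ref{rmk:size} and Theorem~\ref{thm:adjugate}, the vanishing of $\Lambda^{\mathrm{adjg}}$ in low rank, the parity-of-rank argument for excluding rank $1$ and rank $3$, and the identification $\Lambda^{\mathrm{adjg}}=\det(\Lambda)\,\Lambda^{-1}$ in the full-rank case. Your only addition is to spell out the orientation-reversal justification, which the paper leaves implicit.
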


\begin{proof}
Properties {\it 1} and {\it 2} are equivalent by Theorem~\ref{thm:classificationlinear}. Properties {\it 2} and {\it 3} are equivalent by Remark~\ref{rmk:size}. Property {\it 3} is, in turn, equivalent to property {\it 4} by Theorem~\ref{thm:adjugate} and by the definition of $\lin^\rr_{\mathcal{LF}}(\hh)$ given in Subsection~\ref{subsec:regularlinearmaps}.

When $\rank(\Lambda)$ is odd, then property {\it 1} is false. Moreover, if $\rank(\Lambda)\leq2$, then $\Lambda^{\mathrm{adjg}}=0$ by Remark~\ref{rmk:adjugate}, whence property {\it 4} holds true. This yields the desired conclusions that all four properties hold true when $\rank(\Lambda)\in\{0,2\}$, that $\rank(\Lambda)$ cannot equal $1$ under our hypothesis $\Lambda\in\lin^\rr_{\mathcal{RF}}(\hh)$, and that none of the four properties holds true when $\rank(\Lambda)=3$. Finally: if $\rank(\Lambda)=4$, then Remark~\ref{rmk:adjugate} guarantees that $\Lambda^{-1}$ belongs to $\lin^\rr_{\mathcal{LF}}(\hh)$ if, and only if, $\Lambda^{\mathrm{adjg}}$ does.
\end{proof}

\begin{example}
Let us consider the following $\Lambda\in\lin^\rr_{\mathcal{RF}}(\hh)$:
\[\Lambda(z_1+z_2j)=z_1+\bar z_1+j\bar z_2\,,\]
i.e., $\Lambda=\bar\vartheta_i+\frac12\bar\vartheta_j+\frac12\bar\vartheta_k$. With respect to the standard basis $\B=(1,i,j,k)$, we compute
\begin{align*}
\jac_\B^{lr}(\Lambda)&=\begin{bmatrix}
1&0&1&0\\
0&0&0&1\\
1&0&1&0\\
0&1&0&0
\end{bmatrix}\,,\\
(\jac_\B^{lr}(\Lambda))^{\mathrm{adjg}}&=\begin{bmatrix}
-1&0&1&0\\
0&0&0&0\\
1&0&-1&0\\
0&0&0&0
\end{bmatrix}\,,
\end{align*}
and conclude that
\[\Lambda^{\mathrm{adjg}}(w_1+jw_2)=-w_1+\bar w_1\,.\]
Considering the decomposition $\Lambda^{\mathrm{adjg}}=\bar\vartheta_1\frac12+\bar\vartheta_i\frac12+\bar\vartheta_j\left(-\frac12\right)+\bar\vartheta_k\left(-\frac12\right)$, we see that $\Lambda^{\mathrm{adjg}}$ does not belong to $\lin^\rr_{\mathcal{LF}}(\hh)$.
Consistently, $\det(\M_\Lambda)=\frac1{16}$ and $s(\Lambda)=3$ because
\[M_{\Lambda,\B}=\begin{bmatrix}
1&0&0\\
0&1/4&0\\
0&0&1/4
\end{bmatrix}\,.\]
Thus, there exist no $g,h\in\s$ such that $\Lambda\in\lin^\cc_{gh}(\hh_l,\hh_r)$. All these properties are consistent with the fact that $\Lambda$ has rank $3$.
\end{example}

%%%%%%%%%%%%%%%%%%%%%%%

\section{New results about right Fueter-regular functions and holomorphy}\label{sec:regularfunctions}

Let us consider again a connected open subset $U$ of $\hh$ and $C^1$ functions $f,g:U\to\hh$. The following fact is known in literature (see~\cite{perottibiregular,tarallo}):

\begin{remark}\label{ex:complexholomorphic}
Fix an orthonormal basis $\B=(e_0,e_1,e_2,e_3)$ of $\hh$ with $e_0=1,e_1e_2=e_3$.

If $df_p\in\lin^\cc_{e_1e_1}(\hh_l,\hh_r)$ for all $p\in U$, then $f$ is right Fueter-regular. Indeed, let $f_1,f_2$ be the $\cc_{e_1}$-valued functions of the variables $z_1=x_0+e_1x_1,z_2=x_2+e_1x_3$ such that $f(z_1+z_2e_2)=f_1(z_1,z_2)+e_2f_2(z_1,z_2)$ whenever $z_1+z_2e_2\in U$. Then $f$ is right Fueter-regular if, and only if,
\begin{align*}
\bar\partial_\B f&=\frac{\partial f}{\partial \bar z_1}+\frac{\partial f}{\partial \bar z_2}e_2=\frac{\partial f_1}{\partial \bar z_1}-\overline{\frac{\partial f_2}{\partial \bar z_2}}+e_2\left(\frac{\partial f_2}{\partial \bar z_1}+\overline{\frac{\partial f_1}{\partial \bar z_2}}\right)
\end{align*}
vanishes identically, which is in turn equivalent to
\begin{equation}\label{eq:right Fueter-regular}
\frac{\partial f_1}{\partial \bar z_1}=\overline{\frac{\partial f_2}{\partial \bar z_2}},\qquad\frac{\partial f_2}{\partial \bar z_1}=-\overline{\frac{\partial f_1}{\partial \bar z_2}}\,.
\end{equation}
These equations are automatically fulfilled if $f_1,f_2$ are holomorphic in the variables $z_1,z_2$.

Similarly, an inclusion $dg_p\in\lin^\cc_{e_1e_1}(\hh_r,\hh_l)$ for all $p\in U$ implies that $g$ is left Fueter-regular. Indeed, let $g_1,g_2$ be the $\cc_{e_1}$-valued functions of the variables $w_1=x_0+e_1x_1,w_2=x_2-e_1x_3$ such that $g(w_1+e_2w_2)=g_1(w_1,w_2)+g_2(w_1,w_2)e_2$ whenever $w_1+e_2w_2\in U$. Then $f$ is left Fueter-regular if, and only if, $\frac{\partial g_1}{\partial \bar w_1}-\overline{\frac{\partial g_2}{\partial \bar w_2}}+\left(\frac{\partial g_2}{\partial \bar w_1}+\overline{\frac{\partial g_1}{\partial \bar w_2}}\right)e_2$ vanishes identically, which is equivalent to having $\frac{\partial g_1}{\partial \bar w_1}=\overline{\frac{\partial g_2}{\partial \bar w_2}},\frac{\partial g_2}{\partial \bar w_1}=-\overline{\frac{\partial g_1}{\partial \bar w_2}}$. These equations are automatically fulfilled if $g_1,g_2$ are holomorphic in the variables $w_1,w_2$.
\end{remark}

We can use Theorem~\ref{thm:lineargg} to prove a more general result. We recall the following conventions, set in Subsection~\ref{subsec:regularfunctions}: given a smooth orthogonal almost-complex structure $\I$ on $U$ and a smooth orthogonal almost-complex structure $\J$ on an open neighborhood $V$ of $f(U)$, we think of $\I_p$ at a single point $p\in U$ as left multiplication times an imaginary unit $I(p)\in\s$ within $\hh_l$ and we think of $\J_q$ at a single point $q\in V$ as right multiplication times an imaginary unit $J(q)\in\s$ within $\hh_r$.

\begin{proposition}
Let $U$ be a connected open subset of $\hh$, endowed with a smooth orthogonal almost-complex structure $\I$. Let $f:U\to\hh$ be a $C^1$ function and let $V$ be an open neighborhood of $f(U)$ endowed with a smooth orthogonal almost-complex structure $\J$ such that $J(f(p))=I(p)$ for all $p\in U$. If $f:(U,\I)\to(V,\J)$ is holomorphic, then $f$ is right Fueter-regular. Moreover, $s(df_p)\leq2$ for all $p\in U$.
\end{proposition}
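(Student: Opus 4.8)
The plan is to reduce both conclusions to the pointwise behaviour of the real differential $df_p$ and then invoke Theorem~\ref{thm:lineargg}. The first step is to translate the holomorphy hypothesis. By the convention recalled in Subsection~\ref{subsec:regularfunctions}, holomorphy of $f:(U,\I)\to(V,\J)$ means that at each $p\in U$ one has $df_p\in\lin^\cc_{I(p)\,J(f(p))}(\hh_l,\hh_r)$. The extra hypothesis $J(f(p))=I(p)$ lets me write $g:=I(p)=J(f(p))\in\s$ and conclude that $df_p\in\lin^\cc_{gg}(\hh_l,\hh_r)$ --- that is, the source and target structures produce a \emph{matching} type $(g,g)$ at each point, rather than a general type $(g,h)$ with $h\neq g$.

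For right Fueter-regularity I would apply part \textit{1} of Theorem~\ref{thm:lineargg}, which states exactly that $df_p\in\lin^\cc_{gg}(\hh_l,\hh_r)$ for some $g\in\s$ forces $df_p\in\lin^\rr_{\mathcal{RF}}(\hh)$. Since this holds at every $p\in U$, the function $f$ meets the definition of right Fueter-regularity recalled in Subsection~\ref{subsec:regularfunctions}, namely $df_p\in\lin^\rr_{\mathcal{RF}}(\hh)$ for all $p$.

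For the size bound I would use parts \textit{1} and \textit{4} of the same Theorem~\ref{thm:lineargg}. Once $df_p\in\lin^\rr_{\mathcal{RF}}(\hh)$ is established, the quantity $c(df_p)$ --- the maximal number of linearly independent $g\in\s$ with $df_p\in\lin^\cc_{gg}(\hh_l,\hh_r)$ --- is at least $1$, since $g=I(p)$ is one such unit. Part \textit{4} gives $c(df_p)=3-s(df_p)$, whence $s(df_p)=3-c(df_p)\leq 2$, as claimed. (Equivalently, one could invoke the characterization $\lin^\rr_{\mathcal{RF}}(\hh)_2=\bigcup_{g\in\s}\lin^\cc_{gg}(\hh_l,\hh_r)$ subsumed by Theorem~\ref{thm:classificationlinear}.)

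I do not expect a genuine obstacle here: the statement is a direct pointwise consequence of the classification of regular real linear maps already in hand. The only point requiring attention is the correct bookkeeping of types --- recognizing that the hypothesis $J(f(p))=I(p)$ is precisely what produces the equal type $(g,g)$, as opposed to a general type $(g,h)$, which is exactly what makes both the regularity conclusion of Theorem~\ref{thm:lineargg} and the resulting size-two bound available.
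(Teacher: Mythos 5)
Your proposal is correct and follows essentially the same route as the paper: translate the hypothesis $J(f(p))=I(p)$ into the pointwise inclusion $df_p\in\lin^\cc_{gg}(\hh_l,\hh_r)$ with $g=I(p)$, then invoke Theorem~\ref{thm:lineargg} to conclude both that $df_p\in\lin^\rr_{\mathcal{RF}}(\hh)$ and that $s(df_p)\leq2$. The paper phrases the size bound via the inclusion $df_p\in\lin^\rr_{\mathcal{RF}}(\hh)_2$ rather than through $c(df_p)\geq1$ and part \textit{4}, but this is only a cosmetic difference.
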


\begin{proof}
For all $p\in U$, the hypothesis that $f:(U,\I)\to(V,\J)$ is holomorphic implies that $df_p$ belongs to $\lin^\cc_{I(p)J(f(p))}(\hh_l,\hh_r)$. Suppose $J(f(p))=I(p)$ for all $p\in U$. By Theorem~\ref{thm:lineargg}, this implies that $df_p\in\lin^\rr_{\mathcal{RF}}(\hh)_2$ for all $p\in U$. In particular, $f$ is right Fueter-regular.
\end{proof}

We now aim at classifying right Fueter-regular functions and studying their holomorphy by means of size. To this end, we will need the next lemma, which uses the material overviewed in Subsection~\ref{subsec:conformal}. We recall that we have named a quaterionic function \emph{absolutely biregular} if it fulfils all four notions of biregularity overviewed in Section~\ref{sec:introduction}.

\begin{lemma}\label{lem:conformalandregular}
Let $U$ be a nonempty connected open subset of $\hh$. Assume $f:U\to\hh$ to be both conformal and right Fueter-regular: then there exist $g\in\s$ and $\lambda,\mu\in\hh$ with $\lambda\neq0$ such that $f$ is (the restriction to $U$ of) the orientation-reversing conformal real affine transformation $\lambda\,\bar\vartheta_g + \mu=\bar\vartheta_{\lambda g}\lambda\, + \mu$ in $U$. In particular, $f$ is RL-biregular, with inverse $\bar\vartheta_{g}\lambda' + \mu'=\lambda'\bar\vartheta_{g\bar\lambda}+\mu'$, where $\lambda':=\bar\vartheta_g(\lambda^{-1})$ and $\mu':=-\bar\vartheta_g(\lambda^{-1}\mu)$. Moreover, the following are equivalent: $\lambda\perp g$; $f$ is left Fueter-regular; $f$ is absolutely biregular.
\end{lemma}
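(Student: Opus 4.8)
The plan is to combine Liouville's Theorem with a pointwise analysis of $df_p$, after first pinning down which conformal linear maps can be regular. I would begin with the auxiliary fact that an \emph{orientation-preserving} conformal linear map is never regular: writing such a map as $L_\gamma\circ\vartheta_q$ following the characterizations in Subsection~\ref{subsec:conformal}, and using the identity $\sum_\ell e_\ell v e_\ell=-2\bar v$ (a variant of Example~\ref{ex:fourtimesrealpart}), one computes $4(\Lambda,cj)_l=\sum_\ell\Lambda(e_\ell)e_\ell=-2\gamma q^2/|q|^2\neq0$, so by Lemma~\ref{lem:characterizationofregularity} regularity fails. Since a nonzero conformal map is invertible, hence orientation-preserving or orientation-reversing, this forces every nonzero regular conformal linear map to be orientation-reversing; writing it as $L_\lambda\circ\bar\vartheta_p$ and computing $\sum_\ell(L_\lambda\bar\vartheta_p)(e_\ell)e_\ell=4\lambda\,\re(p^{-1})$, regularity forces $\re(p)=0$. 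Thus (consistently with Remark~\ref{rmk:size1isconformal}) the nonzero regular conformal linear maps are \emph{exactly} those of the form $L_\lambda\circ\bar\vartheta_g$ with $g\in\s$, $\lambda\in\hh^*$.

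By Liouville, $f$ is either a real affine map or a real affine map precomposed with an inversion $\tau^{q_0}$. In the affine case $df_p\equiv B$ is conformal and regular, so the auxiliary fact gives $B=L_\lambda\bar\vartheta_g$ with $g\in\s$, i.e. $f=\lambda\bar\vartheta_g+\mu$, and the identity $\lambda\bar\vartheta_g=\bar\vartheta_{\lambda g}\lambda$ is the one-line check $\lambda g\bar x g^{-1}=(\lambda g)\bar x(\lambda g)^{-1}\lambda$. The crux is excluding the inversion case. Here $df_p=B\circ d\tau^{q_0}_p$, where $B$ is the conformal linear part and $d\tau^{q_0}_p=L_{-\bar w^{-2}}\circ\bar\vartheta_{\bar w}$ with $w=p-q_0$ is orientation-reversing. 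Since $df_p$ must be regular, hence orientation-reversing, $B$ must be orientation-preserving; writing $B=L_a\circ\vartheta_b$ and pushing $\vartheta_b$ through via $\vartheta_b L_c=L_{\vartheta_b(c)}\vartheta_b$ and $\vartheta_b\bar\vartheta_{\bar w}=\bar\vartheta_{b\bar w}$ produces $df_p=L_{\gamma'}\circ\bar\vartheta_{b\bar w}$. By the auxiliary fact this is regular only when $b\bar w\in\im(\hh)$, i.e. $\langle b,w\rangle=0$; as $p$ ranges over the nonempty open set $U$ this cannot hold for a fixed $b\neq0$, a contradiction. So the inversion case does not occur and $f=\lambda\bar\vartheta_g+\mu$.

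For the inverse I would solve $y=\lambda\bar\vartheta_g(x)+\mu$ and verify, using $\bar\vartheta_g\circ\bar\vartheta_g=\mathrm{id}$ and that $\bar\vartheta_g$ is an additive antiautomorphism, that $f^{-1}(y)=\bar\vartheta_g(y)\lambda'+\mu'$ with $\lambda'=\bar\vartheta_g(\lambda^{-1})$ and $\mu'=-\bar\vartheta_g(\lambda^{-1}\mu)$: the two required identities $\lambda\bar\vartheta_g(\lambda')=1$ and $\lambda\bar\vartheta_g(\mu')+\mu=0$ collapse to $\lambda\lambda^{-1}=1$ and $-\mu+\mu=0$. The alternative form $\lambda'\bar\vartheta_{g\bar\lambda}$ follows from $R_{\lambda'}\bar\vartheta_g=L_{\lambda'}\bar\vartheta_{\lambda'^{-1}g}$ together with $\lambda'^{-1}g=g\bar\lambda$. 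Since $df^{-1}=R_{\lambda'}\circ\bar\vartheta_g\in\lin^\hh_{\bar\vartheta_g}(\hh_r,\hh_l)\subset\lin^\rr_{\mathcal{LF}}(\hh)$, the inverse is left Fueter-regular, so $f$ is RL-biregular unconditionally.

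Finally, for the equivalences I would read off the regularity defects of the two constant differentials. Using $(cj,\Lambda)_r=\tfrac14\sum_\ell e_\ell\Lambda(e_\ell)$ and $\sum_\ell e_\ell v\bar e_\ell=4\re v$, one finds $(cj,df_p)_r=\re(\lambda g)\,g^{-1}$, which vanishes iff $\re(\lambda g)=-\langle\lambda,g\rangle=0$; hence $f$ is left Fueter-regular $\Leftrightarrow\lambda\perp g$. Rewriting $df^{-1}=L_{\lambda'}\bar\vartheta_{g\bar\lambda}$ shows $f^{-1}$ is right Fueter-regular iff $g\bar\lambda\in\im(\hh)$, again $\Leftrightarrow\lambda\perp g$, whereas $f$ is right-regular and $f^{-1}$ is left-regular always. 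Spelling out the four biregularity notions, absolute biregularity is precisely the requirement that $f$ and $f^{-1}$ both be left- and right-regular, which holds exactly when $\lambda\perp g$; this closes the chain $\lambda\perp g\Leftrightarrow f$ left Fueter-regular $\Leftrightarrow f$ absolutely biregular. The main obstacle throughout is the inversion exclusion: there the orientation bookkeeping must be paired with the observation that the antiautomorphism's axis $b\bar w$ varies with $p$ and so cannot stay imaginary on an open set.
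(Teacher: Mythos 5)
Your proof is correct and follows essentially the same route as the paper's: Liouville's theorem, exclusion of the inversion case by observing that the antiautomorphism's axis $q(\bar p-\bar q_0)$ sweeps an open set and therefore cannot remain imaginary, the explicit computation of the inverse, and reading off left-regularity of $f$ and right-regularity of $f^{-1}$ as the condition $\lambda\perp g$. The only organizational difference is that you dispose of the orientation-preserving affine part in the inversion case by an a priori orientation count (plus your auxiliary computation $\sum_\ell e_\ell v e_\ell=-2\bar v$), where the paper treats the two affine-part subcases separately; this, and the harmless slip $4\lambda\re(p^{-1})$ in place of $4\re(p^{-1})\lambda p$, changes nothing of substance.
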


\begin{proof}
By Liouville's Theorem, two possibilities arise for the conformal function $f:U\to\hh$.
\begin{itemize}
\item $f$ is (the restriction to $U$ of) a conformal real affine transformation of $\hh$, having either the form $\vartheta_q\,\gamma+\mu$ with $\gamma,q\in\hh^*,\mu\in\hh$ or the form $\lambda\,\bar\vartheta_q +\mu$ with $\lambda,q\in\hh^*,\mu\in\hh$. The former option is excluded because $df_p\in\lin^\rr_{\mathcal{RF}}(\hh)$ for all $p\in U$, as $f$ is right Fueter-regular. Thus, $f$ coincides in $U$ with $\lambda\,\bar\vartheta_q+\mu$. The fact that $\lambda\,\bar\vartheta_q=df_p\in\lin^\rr_{\mathcal{RF}}(\hh)$ for $p\in U$ implies that $q\in\im(\hh)$. If we set $g:=\frac{q}{|q|}\in\s$, then $\bar\vartheta_q=\bar\vartheta_g$ and $f$ coincides in $U$ with $\lambda\,\bar\vartheta_g + \mu=\bar\vartheta_{\lambda g}\lambda\, + \mu$. We remark that $f$ is left Fueter-regular if, and only if, $\lambda g\in\im(\hh)$, which is in turn equivalent to $\lambda\perp g$.
\item $f$ is (the restriction to $U$ of) a conformal real affine transformation of $\hh$, precomposed with an inversion $\tau^{q_0}$ with $q_0\in\hh\setminus U$. We will prove that this is impossible when $f$ is right Fueter-regular, using the following property proven in Subsection~\ref{subsec:conformal}:
\[d\tau^{q_0}_p=-(\bar p-\bar q_0)^{-2}\bar\vartheta_{\bar p-\bar q_0}\in\lin^\hh_{\bar\vartheta_{\bar p-\bar q_0}}(\hh_l,\hh_r)\,.\]
Either $f$ coincides in $U$ with $\gamma\,\vartheta_q\circ\tau^{q_0}+\mu$ for some $\gamma,q\in\hh^*,\mu\in\hh$ or $f$ coincides in $U$ with $\lambda\,\bar\vartheta_q\circ\tau^{q_0} +\mu$ for some $\lambda,q\in\hh^*,\mu\in\hh$. In the former case, we conclude that
\[df_p\in\lin^\hh_{\vartheta_q\circ\bar\vartheta_{\bar p-\bar q_0}}(\hh_l,\hh_r)=\lin^\hh_{\bar\vartheta_{q(\bar p-\bar q_0)}}(\hh_l,\hh_r)\,.\]
The image of $U$ through the nonconstant real affine map $p\mapsto q(\bar p-\bar q_0)$ is an open subset of $\hh$, whence not a subset of $\im(\hh)$. Therefore, it is impossible for $df_p$ to be orthogonal to $cj=\bar\vartheta_1$, i.e., to belong to $\lin^\rr_{\mathcal{RF}}(\hh)$, for all $p\in U$. In the latter case, we have
\[df_p\in\lin^\hh_{\bar\vartheta_q\circ\bar\vartheta_{\bar p-\bar q_0}}(\hh_l)=\lin^\hh_{\vartheta_{q(\bar p-\bar q_0)}}(\hh_l)\,,\]
which also makes it impossible for $df_p$ to belong to $\lin^\rr_{\mathcal{RF}}(\hh)$ for all $p\in U$.
In either case, we have reached a contradiction with the hypothesis that $f$ is right Fueter-regular.
\end{itemize}
We are left with studying the inverse $f^{-1}$ of $f=\lambda\,\bar\vartheta_g+\mu$. Since $\bar\vartheta_g^{-1}=\bar\vartheta_g$, we have
\begin{align*}
f^{-1}(y)&=\bar\vartheta_g(\lambda^{-1}(y-\mu))= g (\bar y - \bar\mu)\bar \lambda^{-1} \bar g\\
&=g \bar y \bar \lambda^{-1}\bar g - g \bar\mu \bar \lambda^{-1}\bar g=g \bar y \bar g g \bar \lambda^{-1}\bar g - \bar\vartheta_g(\lambda^{-1}\mu)\\
&=\bar\vartheta_g(y) \bar\vartheta_g(\lambda^{-1}) + \mu' = \bar\vartheta_g(y) \lambda' + \mu'
\end{align*}
for all $y\in f(U)$. Thus, $f^{-1}$ is automatically left Fueter-regular. Moreover, $f^{-1}=\bar\vartheta_g\lambda'+\mu'=\lambda'\,\bar\vartheta_{(\lambda')^{-1}g}+\mu'$ is right-Fueter-regular if, and only if,
\[\im(\hh)\ni(\lambda')^{-1}g=\bar\vartheta_g(\lambda^{-1})^{-1}g=(g\bar\lambda^{-1}\bar g)^{-1}g=g\bar\lambda\,,\]
which is equivalent to $\lambda\perp g$.
\end{proof}

We are now ready for the announced result, which classifies right Fueter-regular functions according to the sizes of their real differentials and studies their holomorphy. The theorem subsumes~\cite[Theorem 6.1]{tarallo}, within cases {\it 1} and {\it 2}.

\begin{theorem}\label{thm:classificationfueter}
Let $U$ be a nonempty connected open subset of $\hh$ and let $f:U\to\hh$ be a right Fueter-regular function. One of the following properties holds:
\begin{enumerate}
\item $f$ is a constant $\mu\in\hh$ and $s(df_p)=0=\rank(df_p)$ for all $p\in U$.
\item $f$ is an  RL-biregular orientation-reversing conformal real affine transformation; more precisely, there exist $g\in\s$ and $\lambda,\mu\in\hh$ with $\lambda\neq0$ such that $f$ coincides with $\lambda\,\bar\vartheta_g + \mu=\bar\vartheta_{\lambda g}\lambda\, + \mu$ in $U$. In particular: for all $p\in U$, $df_p=\lambda\,\bar\vartheta_g$ has $s(df_p)=1$ and $\rank(df_p)=4$. If, moreover, $\lambda\perp g$, then $f$ is absolutely biregular.
\item There exists a proper real analytic subset $E$ of $U$ such that: for all $p\in U\setminus E$, the equalities $s(df_p)=2$ and $\rank(df_p)\in\{2,4\}$ hold; for all $p\in E$, either $s(df_p)=1$ and $\rank(df_p)=4$ or $s(df_p)=0=\rank(df_p)$. Moreover, if $p\in U$ is such that $\rank(df_p)=4$, then $f$ is locally RL-biregular and orientation-reversing near $p$.
\item There exists a proper real analytic subset $E$ of $U$ such that $s(df_p)=3$ and $\rank(df_p)\in\{3,4\}$ for all $p\in U\setminus E$. Moreover, $f$ is not locally RL-biregular near any point of $U$.
\end{enumerate}
In particular, in cases 3 and 4, the Hausdorff dimension of $E$ is less than, or equal, to $3$. In each case listed above, respectively:
\begin{enumerate}
\item[1'.] For every smooth orthogonal almost-complex structure $\I$ on $U$ and every open neighborhood $V$ of $f(U)$ endowed with a smooth orthogonal almost-complex structure $\J$, the map $f:(U,\I)\to(V,\J)$ is holomorphic.
\item[2'.] For every smooth orthogonal almost-complex structure $\I$ on $U$, there exists a unique smooth orthogonal almost-complex structure $\J$ on the open set $f(U)$ such that the map $f:(U,\I)\to(f(U),\J)$ is biholomorphic; namely, $\J$ is determined by the equality $J(f(p))=\bar\vartheta_g(I(p))$, valid for all $p\in U$. Furthermore, for each $p\in U$, the equality $J(f(p))=I(p)$ is equivalent to $I(p)\perp g$.
\item[3'.] For every contractible open subset $C$ of $U\setminus E$, there exists a real analytic map $I:C\to\s$ (inducing a smooth orthogonal almost-complex structure $\I$ on $C$) such that
\[df_p\in\lin^\cc_{gh}(\hh_l,\hh_r)\Longleftrightarrow g=h=\pm I(p)\]
for all $p\in C$. As a consequence: if $U'\subseteq C$ is an open set endowed with a map $I':U'\to\s$ inducing a smooth orthogonal almost-complex structure $\I'$ on $U'$; if $V'$ is an open neighborhood of $f(U')$ endowed with a map $J':V'\to\s$ inducing a smooth orthogonal almost-complex structure $\J'$ on $V'$; then $f:(U',\I')\to(V',\J')$ is holomorphic if, and only if, $I'(p)=J'(f(p))=\pm I(p)$ for all $p\in U'$. In particular, in the special case when $f:C\to f(C)$ is a diffeomorphism, setting $J(q):=I(f^{-1}(q))$ for all $q\in f(C)$ defines the only smooth orthogonal almost-complex structure $\J$ making $f:(C,\I)\to(f(C),\J)$ holomorphic.
\item[4'.] For every $p\in U\setminus E$ and all $g,h\in\s$, we have $df_p\not\in\lin^\cc_{gh}(\hh_l,\hh_r)$. In particular: for every smooth orthogonal almost-complex structure $\I$ on any open subset $U'\subseteq U$ and every open neighborhood $V'$ of $f(U')$ endowed with a smooth orthogonal almost-complex structure $\J$, the map $f:(U',\I)\to(V',\J)$ is not holomorphic.
\end{enumerate}
\end{theorem}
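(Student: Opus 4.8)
The overall strategy is to transfer the pointwise linear-algebraic classification of Theorem~\ref{thm:classificationlinear} to the function $f$ by exploiting real analyticity. Since $f$ is right Fueter-regular it is real analytic, so $p\mapsto df_p$ is a real-analytic map $U\to\lin^\rr_{\mathcal{RF}}(\hh)$; fixing an orthonormal basis $\B=(1,e_1,e_2,e_3)$, the matrix $M_{df_p,\B}$, and hence all its minors and its determinant, depends real-analytically on $p$. Recalling from Remark~\ref{rmk:size} that $s(df_p)=\rank(\M_{df_p})$, I would organize the proof according to the \emph{generic} rank of $M_{df_p,\B}$, i.e. the largest rank attained on $U$. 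By the identity principle for real-analytic functions on the connected set $U$, each of the functions $\det M_{df_p,\B}$, the sum of $2\times2$ principal minors, and $\tr M_{df_p,\B}=\|df_p\|^2$ is either identically zero or nonvanishing off a proper real-analytic subset; this is precisely what produces the exceptional set $E$ in cases 3 and 4, and the containment of $E$ in the zero set of a nonzero real-analytic function gives the bound on its Hausdorff dimension.

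First I would treat the top case: if $\det M_{df_p,\B}\not\equiv0$, set $E:=\{p:\det M_{df_p,\B}=0\}$; on $U\setminus E$ we have $s(df_p)=3$, so Theorem~\ref{thm:classificationlinear}(1) gives cases 4 and 4' at once, while Corollary~\ref{cor:adjugate} pins $\rank(df_p)$ to $\{3,4\}$ and rules out local RL-biregularity (which would force $s\le2$ on an open set, contradicting density of $U\setminus E$). If $\det M_{df_p,\B}\equiv0$ but some $2\times2$ minor is not identically zero, the same recipe with $E:=\{p:\rank M_{df_p,\B}\le1\}$ yields $s(df_p)=2$ off $E$ (case 3); here Corollary~\ref{cor:adjugate} forces $\rank(df_p)\in\{2,4\}$, and at any point where $\rank(df_p)=4$ the inverse function theorem together with Corollary~\ref{cor:adjugate} (applicable on a whole neighborhood, since $s\le2$ throughout this branch) yields the local RL-biregularity and orientation-reversal. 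On $E$ the rank of $M$ is $\le1$, so $s(df_p)\in\{0,1\}$, giving the two listed alternatives.

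If instead all $2\times2$ minors vanish identically, then $s(df_p)\le1$ everywhere. Either $df\equiv0$, in which case $f$ is a constant $\mu$ (case 1) and $df_p=0$ lies in every $\lin^\cc_{gh}(\hh_l,\hh_r)$ (case 1'); or $df_p\neq0$ on the nonempty open set $W=\{p:\|df_p\|^2>0\}$, where $s(df_p)=1$, so $df_p$ is conformal and orientation-reversing by Remark~\ref{rmk:size1isconformal}. Then $f|_W$ is conformal and right Fueter-regular, so Lemma~\ref{lem:conformalandregular} gives $f=\lambda\,\bar\vartheta_g+\mu$ on a component of $W$; by the identity principle this persists on all of $U$, whence $s(df_p)\equiv1$ and all biregularity claims of case 2 follow from Lemma~\ref{lem:conformalandregular}. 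For 2' I would observe that $df_p=\lambda\,\bar\vartheta_g$ forces, via Theorem~\ref{thm:classificationlinear}(3), the unique relation $J(f(p))=\bar\vartheta_g(I(p))$, which defines a smooth orthogonal structure $\J=\bar\vartheta_g\circ I\circ f^{-1}$ on the diffeomorphic image $f(U)$ and makes $f$ biholomorphic; the equivalence $\bar\vartheta_g(I(p))=I(p)\Leftrightarrow I(p)\perp g$ is the final assertion.

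The genuinely delicate point is case 3': producing, on a contractible $C\subseteq U\setminus E$, a \emph{single-valued} real-analytic $I:C\to\s$ with $df_p\in\lin^\cc_{gh}(\hh_l,\hh_r)\Leftrightarrow g=h=\pm I(p)$. By Theorem~\ref{thm:classificationlinear}(2) and Proposition~\ref{prop:holomorphy}, at each $p\in U\setminus E$ the admissible $\hat g(p)$ is, up to sign, the unit eigenvector of $\Q_{df_p}$ for its \emph{simple} top eigenvalue $\|df_p\|^4$, simplicity being exactly what $s=2$ provides (as seen in the proof of Theorem~\ref{thm:classificationlinear}). Simplicity makes the corresponding eigenline vary real-analytically, so locally one obtains a real-analytic line field; the obstacle is to resolve the $\pm$ ambiguity into a global real-analytic section over $C$, which I expect to handle using contractibility, the $\zz/2$-monodromy of the eigenline bundle being trivial over a simply connected base. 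Once $I$ is chosen, the holomorphy equivalences of 3' are immediate from the pointwise statement $df_p\in\lin^\cc_{gh}(\hh_l,\hh_r)\Leftrightarrow g=h=\pm I(p)$, and the diffeomorphism sub-case follows by setting $J:=I\circ f^{-1}$. Case 4' requires nothing further beyond applying Theorem~\ref{thm:classificationlinear}(1) on the dense set $U\setminus E$.
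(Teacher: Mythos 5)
Your proposal is correct and follows essentially the same route as the paper: stratify $U$ by the size of $df_p$ using the real-analytic minors of $M_{df_p,\B}$, apply Theorem~\ref{thm:classificationlinear} and Corollary~\ref{cor:adjugate} pointwise, invoke Lemma~\ref{lem:conformalandregular} together with the identity principle in the size-one case, and trivialize the resulting line field over a contractible set to get 3'. The only cosmetic difference is that in 3' you take the (simple) top eigenline of $\Q_{df_p}$ and kill the $\zz/2$-monodromy over the simply connected $C$, whereas the paper takes $\ker(M_{df_p,\B})$ and trivializes the pulled-back $\rr^*$-principal bundle via Steenrod; these single out the same line and yield the same global real-analytic section $I$.
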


\begin{proof}
Since $f$ is right Fueter-regular, it is real analytic. Moreover, for all $p\in U$, $df_p\in\lin^\rr_{\mathcal{RF}}(\hh)$. For $s\in\{0,1,2,3\}$, let $U_s$ denote the set of points $p\in U$ that fulfil the equality $s(df_p)=s$, which is the same as $\rank(\M_{df_p})=s$. Clearly, $U=U_3\cup U_2\cup U_1\cup U_0$.

The subset $U_3$ of $U$ is open. If $U_3$ is not empty, then $E=U_0\cup U_1\cup U_2$ is a proper real analytic subset of $U$: namely, the zero set of the real analytic function $U\to\rr,\ p\mapsto\det(\M_{df_p})$. Taking into account Corollary~\ref{cor:adjugate}, property {\it 4} holds. By Theorem~\ref{thm:classificationlinear}, the equality $s(df_p)=3$ (valid for all $p\in U\setminus E$) implies that $df_p\not\in\lin^\cc_{gh}(\hh_l,\hh_r)$ for all $g,h\in\s$. Property {\it 4'} immediately follows.

If $U_3=\emptyset$, then $U_2$ is an open subset of $U$. Let us set $\B:=(1,i,j,k)$. If $U_2$ is not empty, then $E=U_0\cup U_1$ is a proper real analytic subset of $U$: namely, $E$ is the set of common zeros of the nine functions $U\to\rr$ mapping each $p\in U$ into the determinants of the $2\times2$ minors of the matrix $M_{df_p,\B}$. Moreover, Theorem~\ref{thm:classificationlinear} guarantees that the rank of $df_p$ equals $0$ at all $p\in U_0$ and equals $4$ at all $p\in U_1$. Taking into account Corollary~\ref{cor:adjugate}, property {\it 3} follows. The equality $s(df_p)=2$ (valid for all $p\in U\setminus E$) implies that there exists a unique $\widehat{g}_p\in\s$ such that the inclusion $df_p\in\lin^\cc_{gh}(\hh_l,\hh_r)$ is equivalent to $g=h=\pm\widehat{g}_p$. According to Theorem~\ref{thm:lineargg}, $\pm\widehat{g}_p$ are the unitary elements of $\ker(M_{df_p,\B})$. Since $f$ is a real analytic function, for each $p\in U\setminus E$ there exist an open ball $B_p\subseteq U\setminus E$ centered at $p$ and a real analytic map $I_p:B_p\to\s$ such that $I_p(x)\in\{\pm\widehat{g}_x\}$ for all $x\in B_p$. Now, let 
\[\pi:E\to M_{3,3}^2(\rr):=\{A\in M_{3,3}(\rr) : \rank(A)=2\}\]
be the $\rr^*$-principal bundle over $M_{3,3}^2(\rr)$ whose fiber over $A\in M_{3,3}^2(\rr)$ is $\pi^{-1}(A)=(\ker(A)\setminus\{0\})\times\{A\}$. Given any contractible open subset $C$ of $U\setminus E$, consider the map $\Phi:C\to M_{3,3}^2(\rr),\ x\mapsto M_{df_x,\B}$ and the pullback bundle $\Phi^*E\to C$. Since $C$ is contractible, standard results such as~\cite[Corollary 11.6]{librosteenrod} guarantee that $\Phi^*E\to C$ is trivial and has a global continuous section $C\to\Phi^*E,\ x\mapsto(v(x),M_{df_x,\B})$. We may define a continuous map $I:C\to\s$ by choosing $I(x)$ to be the quaternionic imaginary unit corresponding to the normalized vector $\frac{v(x)}{\Vert v(x)\Vert}\in\rr^3$. For any $p\in C$, the continuous map $I$ coincides near $p$ with either the real analytic map $I_p:B_p\to\s$ or its opposite. Thus, $I$ itself is real analytic. If, for some open subset $U'$ of $C$ and some open neighborhood $V'$ of $f(U')$ the map $f:(U',\I')\to(V',\J')$ is holomorphic, then for all $p\in U'$ we have $df_p\in\lin^\cc_{I'(p),J'(f(p))}(\hh_l,\hh_r)$, whence $I'(p)=J'(f(p))=\pm I(p)$. Finally, in the special case when $f:C\to f(C)$ is a diffeomorphism, the real analytic map $J:f(C)\to\s,\ q\mapsto I(f^{-1}(q))$ is the only smooth map $J:f(C)\to\s$ such that $I(p)=J(f(p))$ for all $p\in C$. Property {\it 3'} is therefore proven.

If $U_2=U_3=\emptyset$, then $U_1$ is an open subset of $U$. If it is not empty, then $f_{|_{U_1}}$ is not only a right Fueter-regular map but also, by Remark~\ref{rmk:size1isconformal}, a conformal map. By Lemma~\ref{lem:conformalandregular}, there exist $g\in\s$ and $\lambda,\mu\in\hh$ with $\lambda\neq0$ such that $f$ coincides with $\lambda \bar\vartheta_g + \mu=\bar\vartheta_{\lambda g}\lambda+\mu$ in $U_1$. Since $f:U\to\hh$ is real analytic, the same is true in $U$. In particular, $U_1=U$. Thus, property {\it 2} holds. By Theorem~\ref{thm:classificationlinear}, the equality $s(df_p)=1$ (valid for all $p\in U$) implies that, for each $I\in\s$, the inclusion $df_p\in\lin^\cc_{IJ}(\hh_l,\hh_r)$ is equivalent to $J=\bar\vartheta_g(I)$. The latter expression equals $I$ if, and only if, $I\perp g$. Property {\it 2'} immediately follows.

If $U_1=U_2=U_3=\emptyset$, then $U_0=U$. In other words, $df_p=0$ for all $p\in U$. Thus, the real analytic function $f$ is constant in the connected open set $U$, which is property {\it 1}. By Theorem~\ref{thm:classificationlinear}, the equality $s(df_p)=0$ (valid for all $p\in U$) implies that, for each $g,h\in\s$, the inclusion $df_p\in\lin^\cc_{gh}(\hh_l,\hh_r)$ holds. Property {\it 1'} immediately follows.
\end{proof}

In particular, we have found yet another proof of a well-known property of right Fueter-regular functions $f:U\to\hh$: for all $p\in U$, the rank of $df_p$ is $0,2,3,$ or $4$. We have also proven the analog, for right Fueter-regular functions, of~\cite[Theorem 5]{perottibiregular}.

We illustrate Theorem~\ref{thm:classificationfueter} with some examples, which also show that all combinations of rank and size envisioned in the theorem are possible. Examples of properties {\it 1} and {\it 1'} and of properties {\it 2} and {\it 2'} of Theorem~\ref{thm:classificationfueter} are easy to find.

\begin{example}
Consider the real affine transformations
\begin{align*}
f&:\hh\to\hh,\quad z_1+z_2j\mapsto z_2+j(z_1-i)\,,\\
g&:\hh\to\hh,\quad z_1+z_2j\mapsto iz_1+1+j(iz_2)\,.
\end{align*}
Since $\bar\vartheta_j(z_1+z_2j)=z_1-jz_2$, these transformations can be expressed as
\begin{align*}
f&=j\bar\vartheta_j+k=\bar\vartheta_1j+k\,,\\
g&=i\bar\vartheta_j+1=\bar\vartheta_ki+1\,.
\end{align*}
In particular, $f,g$ are RL-biregular orientation-reversing conformal real affine transformations of $\hh$. We point out that $g$ is absolutely biregular, while $f$ is not left Fueter-regular. For every smooth orthogonal almost-complex structure $\I$ on $\hh$, with $\I_p=L_{I(p)}$ for all $p\in\hh$, there exists a unique smooth orthogonal almost-complex structure $\J$ on $\hh$ such that the maps $f,g:(\hh,\I)\to(\hh,\J)$ are biholomorphic; namely, $\J_p:=R_{\bar\vartheta_j(I(p))}$ for all $p\in\hh$. Furthermore, for each $p\in\hh$, we have that $\bar\vartheta_j(I(p))=jI(p)j$ equals $I(p)$ if, and only if, $I(p)\in i\rr+k\rr$.
\end{example}

Our next two examples concern properties {\it 3} and {\it 3'} of Theorem~\ref{thm:classificationfueter}.

\begin{example}
Consider the right (and left) Fueter-regular function $f:\hh\to\cc\subset\hh$ defined by the formula $f(z_1+z_2j)=\phi(z_1)$, where $\phi:\cc\to\cc$ is a nonconstant entire function. The real differential of $f$ can be expressed as
\[df_{z_1+z_2j}=\phi'(z_1)dz_1=\frac{\phi'(z_1)}2\bar\vartheta_j+\frac{\phi'(z_1)}2\bar\vartheta_k\,.\]
The matrix $M=M_{\Lambda,\B}$, with $\Lambda=df_{z_1+z_2j}$ and $\B=(1,i,j,k)$, can be easily computed by means of Theorem~\ref{thm:formmatrix}:
\[M=|\phi'(z_1)|^2\begin{bmatrix}
0&0&0\\
0&1/4&0\\
0&0&1/4\\
\end{bmatrix}\,.\]
If we set $E:=\{z_1+z_2j\in\hh : \phi'(z_1)=0\}$, then the real differential of $f$ has size $0$ (and rank $0$) at all points of $E$ and size $2$ (and rank $2$) at all points of $\hh\setminus E$. At each $p\in\hh\setminus E$, we have $df_p\in\lin^\cc_{gh}(\hh_l,\hh_r)$ if, and only if, $g=h=\pm i$. For any connected open set $U$, $\pm L_i$ are the only smooth orthogonal almost-complex structures $\I$ on $U$ such that $df_p\circ\I_p=R_{I(p)}\circ df_p$ for all $p\in U$. For any open $V\supset f(U)$ and any smooth orthogonal almost-complex structure $\J$ on $V$, the map $f:(U,L_i)\to(V,\J)$ is holomorphic if, and only if, $\J_{|_{f(U)}}=R_i$. Both $\J=R_i$ and $\J_p=R_{J(p)}$ with
\[J(w_1+jw_2):=\frac{i+jw_2}{\sqrt{1+|w_2|^2}}\,,\] are examples of smooth orthogonal almost-complex structures $\J$ on $\hh$ such that $f:(\hh\setminus E,L_i)\to(\hh,\J)$ is holomorphic.
\end{example}

Within the last example, both constant structures $L_i$ and $R_i$ are integrable on $\hh$, while the nonconstant structure $\J$ defined on $\hh$ is not integrable, see~\cite{wood}.

\begin{example}\label{ex:biregularsize2}
Consider the function
\[f:\hh\to\hh,\quad z_1+z_2j\mapsto\bar z_1+z_2^2+j\bar z_2\,,\]
i.e., $f(x_0+ix_1+jx_2+kx_3)=x_0+x_2^2-x_3^2+i(2x_2x_3-x_1)+jx_2+kx_3$. It clearly is a diffeomorphism, with inverse $f^{-1}(y_0+iy_1+jy_2+ky_3)=y_0-y_2^2+y_3^2+i(2y_2y_3-y_1)+jy_2+ky_3$, i.e., $f^{-1}(w_1+jw_2)=\bar w_1-w_2^2+\bar w_2 j$. The real differential of $f$, namely 
\begin{align*}
df_{z_1+z_2j}&=0 dz_1+ 2z_2 dz_2+d\bar z_1+jd\bar z_2\,,
\end{align*}
has rank $4$ and can be expressed as
\[df_{z_1+z_2j}=\bar\vartheta_i+z_2j\bar\vartheta_j-z_2j\bar\vartheta_k\,.\]
Thus, $f$ is right Fueter-regular.

For $z_2=0$, we have $df_{z_1}=\bar\vartheta_i$, whence $s(df_{z_1})=1$ and $df_{z_1}\in\lin^\cc_{gg}(\hh_l,\hh_r)$ for all $g\in\s$ with $g\perp i$. We assume henceforth $z_2\neq0$ and compute the matrix $M=M_{\Lambda,\B}$, with $\Lambda=df_{z_1+z_2j}$ and $\B=(1,i,j,k)$, using Theorem~\ref{thm:formmatrix}. Splitting $z_2$ into real coordinates as $z_2=x_2+ix_3$, we obtain
\begin{align*}
&\langle 1i,z_2jj\rangle=-\langle i,z_2\rangle=-x_3\,,\\
&\langle 1i,-z_2jk\rangle=-\langle i,z_2i\rangle=-\langle 1,z_2\rangle=-x_2\,,\\
&\langle z_2jj,-z_2jk\rangle=\langle z_2,z_2i\rangle=0\,,
\end{align*}
whence
\[M=\begin{bmatrix}
1&-x_3&-x_2\\
-x_3&|z_2|^2&0\\
-x_2&0&|z_2|^2
\end{bmatrix}\,.\]
By direct computation, the eigenvalues of $M$ are $1+|z_2|^2,|z_2|^2,0$, while the corresponding eigenspaces are spanned by $\begin{bmatrix}1&-x_3&-x_2\end{bmatrix}^T,\begin{bmatrix}0&x_2&-x_3\end{bmatrix}^T,\begin{bmatrix}|z_2|^2&x_3&x_2\end{bmatrix}^T$, respectively.

On the one hand, we conclude that $f$ is RL-biregular and that the only $g\in\s$ such that $df_{z_1+z_2j}\in\lin^\cc_{gg}(\hh_l,\hh_r)$ are
\[g=\pm\frac{|z_2|^2i+x_3j+x_2k}{\sqrt{|z_2|^4+|z_2|^2}}=\pm\frac{|z_2|^2i+kz_2}{\sqrt{|z_2|^4+|z_2|^2}}\,.\]
Thus, setting for all $z_1,z_2\in\cc$ with $z_2\neq0$
\[I(z_1+z_2j):=\frac{|z_2|^2i+kz_2}{\sqrt{|z_2|^4+|z_2|^2}}=:J(f(z_1+z_2j))\]
as well as $\I_{z_1+z_2j}:=L_{I(z_1+z_2j)}, \J_{w_1+jw_2}:=R_{J(w_1+jw_2)}$ defines two smooth orthogonal almost-complex structures $\I,\J$ on $\hh\setminus\cc$ such that $f: (\hh\setminus\cc, \I)\to(\hh\setminus\cc,\J)$ is holomorphic.

On the other hand, we find that $df_{z_1+z_2j}$ has size $s(df_{z_1+z_2j})=2$. We also find, in terms of the vectors $i-x_3j-x_2k=i-kz_2$ and $x_2j-x_3k=jz_2$, a decomposition 
\begin{align*}
df_{z_1+z_2j}&=\lambda_1\bar\vartheta_{i-kz_2}+\lambda_2\bar\vartheta_{jz_2}\,,\\
\lambda_1&=(df_{z_1+z_2j},\bar\vartheta_{i-kz_2})_l=1-jz_2\,,\\
\lambda_2&=(df_{z_1+z_2j},\bar\vartheta_{jz_2})_l=jz_2\,.
\end{align*}
\end{example}

For the orthogonal complex structure $\I$ constructed in the last example, an explicit computation of the Nijenhuis tensor at a point $z_1+z_2j$ yields $N^3_{12}=\frac{x_3}{|z_2|^2(1+|z_2|^2)^2},N^4_{12}=\frac{-x_2}{|z_2|^2(1+|z_2|^2)^2}$. We conclude that $\I$ is not integrable. Similarly, $\J$ is not integrable. Our next example concerns properties {\it 4} and {\it 4'}.

\begin{example}
Consider the function
\[f:\hh\to\rr+j\rr+k\rr\subset\hh,\quad z_1+z_2j\mapsto |z_1|^2-|z_2|^2+j\bar z_1\bar z_2\,.\]
The real differential of $f$, namely 
\begin{align*}
df_{z_1+z_2j}&=\bar z_1dz_1-\bar z_2 dz_2+(z_1+j\bar z_2)d\bar z_1+(-z_2+j\bar z_1)d\bar z_2\\
&=\bar z_1dz_1-\bar z_2 dz_2+(z_1+z_2j)d\bar z_1+(z_1+z_2j)jd\bar z_2
\end{align*}
can be expressed as
\[df_{z_1+z_2j}=(z_1+z_2j)\bar\vartheta_i+\frac{\bar z_1-\bar z_2j}{2}\bar\vartheta_j+\frac{\bar z_1+\bar z_2j}{2}\bar\vartheta_k\,.\]
Thus, $f$ is right Fueter-regular. For $z_1+z_2j=0$, we have $df_0=0$, whence $s(df_0)=0=\rank(df_0)$ and $df_0\in\lin^\cc_{gh}(\hh_l,\hh_r)$ for all $g,h\in\s$. We assume henceforth $z_1+z_2j\neq0$ and compute the matrix $M=M_{\Lambda,\B}$, with $\Lambda=df_{z_1+z_2j}$ and $\B=(1,i,j,k)$, using Theorem~\ref{thm:formmatrix}:
\[M=(|z_1|^2+|z_2|^2)\begin{bmatrix}
1&0&0\\
0&1/4&0\\
0&0&1/4
\end{bmatrix}\,.\]
Since $M$ has rank $3$, Theorem~\ref{thm:lineargg} yields the equality $s(df_{z_1+z_2j})=3$. Moreover, $\rank(df_{z_1+z_2j})=3$. By Theorem~\ref{thm:classificationfueter}, $f$ is not holomorphic. More precisely, for all $p\in\hh\setminus\{0\}$ and all $g,h\in\s$ we have $df_p\not\in\lin^\cc_{gh}(\hh_l,\hh_r)$.
\end{example}

In the last example, we could have concluded that properties {\it 4} and {\it 4'} applied by simply proving that $\rank(df_p)=3$ at all $p\in\hh\setminus\{0\}$. In case a right Fueter-regular function $f:U\to\hh$ has $\rank(df_p)=4$ at some point $p\in U$, a simple criterion to determine whether properties {\it 4} and {\it 4'} apply to $f$ is the following.

\begin{theorem}\label{thm:differentialcriterionholomorphy}
Fix a connected open subset $U$ of $\hh$ and a right Fueter-regular function $f:U\to\hh$. Fix an orthonormal basis $\B=(e_0,e_1,e_2,e_3)$ of $\hh$ with $e_0=1$ and $e_1e_2=e_3$. Let us split the quaternionic variable of $f$ as $z_1+z_2e_2$, with $z_1,z_2$ in $\cc_{e_1}$ and $f$ as $f(z_1+z_2e_2)=f_1(z_1,z_2)+e_2f_2(z_1,z_2)$. If $p\in U$, then there exist $g,h\in\s$ such that $df_p\in\lin^\cc_{gh}(\hh_l,\hh_r)$ if, and only if, the expressions
{\small
\begin{align}
&\left(\left|\frac{\partial f_1}{\partial z_1}\right|^2-\left|\frac{\partial f_2}{\partial z_2}\right|^2\right)\frac{\partial f_1}{\partial \bar z_1}+\left(\overline{\frac{\partial f_2}{\partial z_1}}\frac{\partial f_1}{\partial z_1}+\frac{\partial f_1}{\partial z_2}\overline{\frac{\partial f_2}{\partial z_2}}\right)\frac{\partial f_2}{\partial \bar z_1}-\left(\overline{\frac{\partial f_2}{\partial z_1}}\frac{\partial f_2}{\partial z_2}+\overline{\frac{\partial f_1}{\partial z_1}}\frac{\partial f_1}{\partial z_2}\right)\overline{\frac{\partial f_2}{\partial \bar z_1}}\,,
\label{eq:differentialcriterionholomorphy1}\\
&\left(\frac{\partial f_2}{\partial z_1}\overline{\frac{\partial f_2}{\partial z_2}}+\frac{\partial f_1}{\partial z_1}\overline{\frac{\partial f_1}{\partial z_2}}\right)\frac{\partial f_1}{\partial \bar z_1}+\left(\overline{\frac{\partial f_2}{\partial z_1}}\frac{\partial f_1}{\partial z_1}+\frac{\partial f_1}{\partial z_2}\overline{\frac{\partial f_2}{\partial z_2}}\right)\overline{\frac{\partial f_1}{\partial \bar z_1}}-\left(\left|\frac{\partial f_1}{\partial z_2}\right|^2-\left|\frac{\partial f_2}{\partial z_1}\right|^2\right)\overline{\frac{\partial f_2}{\partial \bar z_1}}
\label{eq:differentialcriterionholomorphy2}
\end{align}}
both vanish at $p$.
\end{theorem}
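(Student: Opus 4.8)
The plan is to reduce the statement to the condition $\underline\lambda_0=0$ isolated in Theorem~\ref{thm:adjugate}, and then to compute $\underline\lambda_0$ explicitly from an adjugate Jacobian matrix. Set $\Lambda:=df_p$; since $f$ is right Fueter-regular, $\Lambda\in\lin^\rr_{\mathcal{RF}}(\hh)$, so Corollary~\ref{cor:adjugate} applies. By the equivalence of its properties {\it 1}, {\it 3} and {\it 4}, the existence of $g,h\in\s$ with $\Lambda\in\lin^\cc_{gh}(\hh_l,\hh_r)$ is equivalent to $\det(\M_\Lambda)=0$ and to $\Lambda^{\mathrm{adjg}}\in\lin^\rr_{\mathcal{LF}}(\hh)$. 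By Theorem~\ref{thm:adjugate}, $\det(\M_\Lambda)=\frac14|\underline\lambda_0|^2$ with $\underline\lambda_0=(cj,\Lambda^{\mathrm{adjg}})_r$; since $|\underline\lambda_0|^2=0$ if and only if $\underline\lambda_0=0$, the whole statement reduces to proving that $\underline\lambda_0=0$ is equivalent to the simultaneous vanishing of~\eqref{eq:differentialcriterionholomorphy1} and~\eqref{eq:differentialcriterionholomorphy2}. In particular this route uses no hypothesis on $\rank(df_p)$, so the criterion holds at every $p\in U$, the rank-$4$ case being merely the situation in which it is most useful.

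Next I would express $\underline\lambda_0$ through the left Cauchy--Riemann--Fueter operator. Writing $\Lambda^{\mathrm{adjg}}(w_1+e_2w_2)=g_1(w_1,w_2)+g_2(w_1,w_2)e_2$ with $w_1,w_2\in\cc_{e_1}$, the left analog of Remark~\ref{ex:complexholomorphic} (together with the identity $2(cj,\cdot)_r=\bar\partial_l$ recalled in Subsection~\ref{subsec:regularfunctions}) gives
\[2\underline\lambda_0=2(cj,\Lambda^{\mathrm{adjg}})_r=\left(\frac{\partial g_1}{\partial\bar w_1}-\overline{\frac{\partial g_2}{\partial\bar w_2}}\right)+\left(\frac{\partial g_2}{\partial\bar w_1}+\overline{\frac{\partial g_1}{\partial\bar w_2}}\right)e_2\,.\]
The four partial derivatives occurring here are exactly the entries $B_{13},B_{24},B_{23},B_{14}$ of the matrix $B:=\jac_\B^{rl}(\Lambda^{\mathrm{adjg}})=(\jac_\B^{lr}(\Lambda))^{\mathrm{adjg}}$, so that $\underline\lambda_0=0$ if and only if $B_{13}=\overline{B_{24}}$ and $B_{23}=-\overline{B_{14}}$. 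By Definition~\ref{def:adjugatematrix}, each of these entries is a signed $3\times3$ minor of $\jac_\B^{lr}(\Lambda)$, namely $B_{13}=\det(A_{31})$, $B_{24}=\det(A_{42})$, $B_{14}=-\det(A_{41})$, $B_{23}=-\det(A_{32})$, where $A=\jac_\B^{lr}(\Lambda)$ and $A_{ji}$ denotes $A$ with row $j$ and column $i$ deleted.

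It then remains to compute those four cofactors. I would write $A$ in terms of the six complex numbers $\frac{\partial f_1}{\partial z_1},\frac{\partial f_1}{\partial z_2},\frac{\partial f_2}{\partial z_1},\frac{\partial f_2}{\partial z_2},\frac{\partial f_1}{\partial\bar z_1},\frac{\partial f_2}{\partial\bar z_1}$, using right Fueter-regularity~\eqref{eq:right Fueter-regular} to eliminate the remaining two antiholomorphic derivatives through $\frac{\partial f_1}{\partial\bar z_2}=-\overline{\partial f_2/\partial\bar z_1}$ and $\frac{\partial f_2}{\partial\bar z_2}=\overline{\partial f_1/\partial\bar z_1}$. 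Expanding the $3\times3$ determinants and forming the combinations $B_{13}-\overline{B_{24}}$ and $B_{23}+\overline{B_{14}}$, every monomial carrying two or three antiholomorphic factors, as well as the purely holomorphic ones, cancels---crucially because $\cc_{e_1}$ is commutative---and what survives is \emph{exactly} the left-hand side of~\eqref{eq:differentialcriterionholomorphy1} and of~\eqref{eq:differentialcriterionholomorphy2}, respectively, with no spurious proportionality factor. Thus the two $\cc_{e_1}$-components of $2\underline\lambda_0$ coincide with the two stated expressions, and $\underline\lambda_0=0$ holds if and only if both vanish, completing the proof.

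The only genuine labor is the determinant expansion of the last paragraph, and the main obstacle is bookkeeping: the individual cofactors contain many terms of the ``wrong'' antiholomorphic degree, and one must organize the computation so that their cancellation in the two conjugate-symmetric combinations is transparent. A sanity check on the examples (e.g.\ $f(z_1+z_2j)=\bar z_1+z_2^2+j\bar z_2$ of size $2$, where both expressions vanish, versus $f(z_1+z_2j)=z_1+\bar z_1+j\bar z_2$ of size $3$, where~\eqref{eq:differentialcriterionholomorphy1} does not) confirms the exactness of the identity and guards against sign or conjugation errors.
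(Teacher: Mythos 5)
Your proposal is correct and follows essentially the same route as the paper: it reduces the statement to $\det(\M_{df_p})=0$ via Corollary~\ref{cor:adjugate}, rewrites this as $\underline{\lambda}_0=0$ via Theorem~\ref{thm:adjugate}, identifies the two $\cc_{e_1}$-components of $2\underline{\lambda}_0$ with the cofactor combinations $\det(A_{31})-\overline{\det(A_{42})}$ and $\det(A_{32})+\overline{\det(A_{41})}$ of $A=\jac_\B^{lr}(df_p)$, and then expands these cofactors in terms of the partial derivatives of $f_1,f_2$. The determinant expansion you defer as ``the only genuine labor'' is precisely what the paper carries out in Proposition~\ref{prop:abc} (combined with the substitution of Remark~\ref{rmk:complexjacobian}), and the outcome you assert agrees with it.
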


We postpone the proof of Theorem~\ref{thm:differentialcriterionholomorphy} to the appendix and provide an example.

\begin{example}
The function
\[f:\hh\to\hh,\quad z_1+z_2j\mapsto z_1+z_2^2+\overline{z}_1+j(z_1^2+z_2+\overline{z}_2)\,,\]
has
\[\frac{\partial f_1}{\partial z_1}=\frac{\partial f_1}{\partial \overline{z}_1}=\frac{\partial f_2}{\partial z_2}=\frac{\partial f_2}{\partial \overline{z}_2}\equiv1,\frac{\partial f_1}{\partial z_2}=2z_2,\frac{\partial f_2}{\partial z_1}=2z_1,
\frac{\partial f_1}{\partial \overline{z}_2}=\frac{\partial f_2}{\partial \overline{z}_1}\equiv0\,.\]
It is right Fueter-regular because conditions~\eqref{eq:right Fueter-regular} are fulfilled. Condition~\eqref{eq:differentialcriterionholomorphy1} is fulfilled throughout $\hh$, but condition~\eqref{eq:differentialcriterionholomorphy2} is only fulfilled in the hyperplane $\Pi:z_1+\overline{z}_1+z_2+\overline{z}_2=0$, i.e., $\Pi:x_0+x_2=0$, of $\hh$. Therefore, $f$ is not holomorphic. More precisely,  for all $p\in\hh\setminus\Pi$ and all $g,h\in\s$, we have $df_p\not\in\lin^\cc_{gh}(\hh_l,\hh_r)$. Moreover, $f$ is not locally RL-biregular despite being a local diffeomorphism from an open subset $U$ of $\hh$ to $f(U)$. For instance: at $p_0=\frac{1+j}2\not\in\Pi$, the real differential
\[df_{p_0}=dz_1+dz_2+d\overline{z}_1+j(dz_1+dz_2+d\overline{z}_2)\]
has rank $4$ and corresponds to the element $\Lambda(z_1+z_2j)=z_1+z_2+\bar z_1+j(z_1+z_2+\bar z_2)$ of $\lin^\rr_{\mathcal{RF}}(\hh)$, i.e.,
\[\Lambda(x_0+ix_1+jx_2+kx_3)=2x_0+x_2+ix_3+j(x_0+2x_2)+k(-x_1)\,,\]
whose inverse
\[\Lambda^{-1}(y_0+iy_1+jy_2+yx_3)=\frac23y_0-\frac13y_2-iy_3+j\left(\frac23y_2-\frac13y_0\right)+ky_1\,,\]
i.e., $\Lambda^{-1}(w_1+jw_2)=\frac13(w_1+w_2+\bar w_1-2\bar w_2)+\frac13(w_1+w_2-2\bar w_1+\bar w_2)j$, is an element of $\lin^\rr(\hh)$ that does not belong to $\lin^\rr_{\mathcal{LF}}(\hh)$.
\end{example}

%%%%%%%%%%%%%%%%%%%%%%%

\section*{Appendix}
\setcounter{section}{0}
\setcounter{theorem}{0}

We conclude our work with a technical appendix, which serves to prove Theorem~\ref{thm:adjugate} and Theorem~\ref{thm:differentialcriterionholomorphy}. We begin with three useful lemmas.

\begin{lemma}\label{lem:complexifiedLambda1}
Let $\Lambda\in\lin^\rr(\hh)$. Fix an orthonormal basis $\B=(e_0,e_1,e_2,e_3)$ of $\hh$ with $e_0=1$ and $e_1e_2=e_3$. There exist quaternions $m_1,m_2,n_1,n_2$ such that
\begin{align*}
\Lambda(z_1+z_2e_2) = m_1z_1+m_2z_2+n_1\bar z_1+n_2\bar z_2\,,
\end{align*}
for all $z_1,z_2\in\cc_{e_1}$. Moreover, if $\Lambda=\sum_{m=0}^3\lambda_m\bar\vartheta_{e_m}$, then
\[\lambda_0=\frac12(n_1+n_2e_2), \lambda_1=\frac12(n_1-n_2e_2),\lambda_2=\frac12(m_1+m_2e_2), \lambda_3=\frac12(m_1-m_2e_2)\,.\]
In particular:
\begin{enumerate}
\item $\Lambda\in\lin^\rr_{\mathcal{RF}}(\hh)$ if, and only if, $n_2=n_1e_2$. If this is the case, then $\lambda_1=n_1$. Moreover,
\begin{align}\label{eq:hermitianM}
M_{\Lambda,\B}=\begin{bmatrix}
\Vert c\Vert_H^2&\frac12\re(e_1c,a-b)_H&\frac12\re(c,a+b)_H\\
\frac12\re(e_1c,a-b)_H&\frac14\Vert a-b\Vert_H^2&\frac14\re(-a+b, (a+b)e_1)_H\\
\frac12\re(c,a+b)_H&\frac14\re(-a+b, (a+b)e_1)_H&\frac14\Vert a+b\Vert_H^2\\
\end{bmatrix}\,,
\end{align}
where $(\cdot,\cdot)_H$ denotes the standard Hermitian product on $\cc_{e_1}^2$, where $\Vert\cdot\Vert_H$ denotes the corresponding complex norm and where $a=(a_1,a_2),b=(b_1,b_2),c=(c_1,c_2)\in\cc_{e_1}^2$ are the coordinates of $m_1e_2,m_2,n_1$ with respect to the right $\cc_{e_1}$-basis $(e_0,e_2)$ of $\hh=\cc_{e_1}+e_2\cc_{e_1}$; equivalently, where
\[\jac_\B^{lr}(\Lambda)=\begin{bmatrix}
\overline{a}_2&b_1&c_1&-\overline{c}_2\\
-\overline{a}_1&b_2&c_2&\overline{c}_1\\
\overline{c}_1&-c_2&a_2&\overline{b}_1\\
\overline{c}_2&c_1&-a_1&\overline{b}_2
\end{bmatrix}\,.\]
\item $\Lambda\in\lin^\cc_{e_1,e_1}(\hh_l,\hh_r)$ if, and only if, $n_1=0=n_2$. If this is the case, then $\lambda_0=0=\lambda_1$ and
\begin{align*}
M_{\Lambda,\B}=\begin{bmatrix}
0&0&0\\
0&\frac14\Vert a-b\Vert_H^2&\frac14\re(-a+b, (a+b)e_1)_H\\
0&\frac14\re(-a+b, (a+b)e_1)_H&\frac14\Vert a+b\Vert_H^2\\
\end{bmatrix}\,,
\end{align*}
\end{enumerate}
\end{lemma}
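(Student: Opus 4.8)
The plan is to reduce everything to the decomposition $\Lambda=\sum_{m=0}^3\lambda_m\bar\vartheta_{e_m}$ guaranteed by Subsection~\ref{subsec:linear}, working throughout in the coordinates $z_1,z_2\in\cc_{e_1}$. First I would record the four identities
\[\bar\vartheta_{e_0}(z_1+z_2e_2)=\bar z_1-z_2e_2,\qquad\bar\vartheta_{e_1}(z_1+z_2e_2)=\bar z_1+z_2e_2,\]
\[\bar\vartheta_{e_2}(z_1+z_2e_2)=z_1-\bar z_2e_2,\qquad\bar\vartheta_{e_3}(z_1+z_2e_2)=z_1+\bar z_2e_2,\]
which follow by direct computation from $e_1e_2=e_3$ and the rule $e_2w=\bar we_2$ for $w\in\cc_{e_1}$ (the last three are already recorded, for $(e_1,e_2,e_3)=(i,j,k)$, in Example~\ref{ex:size1}). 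Substituting and rewriting each term so that the quaternion coefficient stands to the \emph{left} of $z_i$ or $\bar z_i$ (legitimate since $z_i$ commutes with $e_1$ and $z_ie_2=e_2\bar z_i$) yields $\Lambda(z_1+z_2e_2)=m_1z_1+m_2z_2+n_1\bar z_1+n_2\bar z_2$ with $m_1=\lambda_2+\lambda_3$, $n_1=\lambda_0+\lambda_1$, $m_2=(\lambda_3-\lambda_2)e_2$ and $n_2=(\lambda_1-\lambda_0)e_2$. This proves the existence statement, and inverting the linear system (using $e_2^{-1}=-e_2$) gives the four displayed formulas for $\lambda_0,\lambda_1,\lambda_2,\lambda_3$.

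Next I would settle the two characterizations. By Subsection~\ref{subsec:regulalinearmaps}, $\Lambda$ is regular if and only if $\lambda_0=0$; since $\lambda_0=\tfrac12(n_1+n_2e_2)$, this is equivalent to $n_2=n_1e_2$, and then $\lambda_1=\tfrac12(n_1-n_2e_2)=n_1$, establishing the first assertion of item~1. For item~2, I would observe that each basis map lies in a single summand of $\lin^\rr(\hh)=\lin^\cc_{e_1e_1}(\hh_l,\hh_r)\oplus\lin^\cc_{e_1(-e_1)}(\hh_l,\hh_r)$: a one-line evaluation of $\bar\vartheta_{e_m}(e_1)$ (using the property $\lin^\hh_{\bar\vartheta_{e_m}}\subseteq\lin^\cc_{e_1\,\bar\vartheta_{e_m}(e_1)}$) gives $\bar\vartheta_{e_2},\bar\vartheta_{e_3}\in\lin^\cc_{e_1e_1}(\hh_l,\hh_r)$ and $\bar\vartheta_{e_0},\bar\vartheta_{e_1}\in\lin^\cc_{e_1(-e_1)}(\hh_l,\hh_r)$; a dimension count then shows $\lin^\cc_{e_1e_1}(\hh_l,\hh_r)=\lin^\hh_{\bar\vartheta_{e_2}}(\hh_l,\hh_r)\oplus\lin^\hh_{\bar\vartheta_{e_3}}(\hh_l,\hh_r)$. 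As these are left quaternionic subspaces, $\Lambda\in\lin^\cc_{e_1e_1}(\hh_l,\hh_r)$ holds if and only if $\lambda_0=\lambda_1=0$, equivalently $n_1=n_2=0$, which is the first assertion of item~2.

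For the matrix in the regular case I would invoke Corollary~\ref{cor:formmatrix}, which identifies $M_{\Lambda,\B}$ with the Gram matrix of $(\lambda_1e_1,\lambda_2e_2,\lambda_3e_3)$ for $\langle\cdot,\cdot\rangle$. Writing these three quaternions in the right $\cc_{e_1}$-basis $(e_0,e_2)$, and using $\lambda_1=n_1$ together with $\lambda_2=\tfrac12(m_1+m_2e_2)$, $\lambda_3=\tfrac12(m_1-m_2e_2)$ and the dictionary $m_1e_2\leftrightarrow a$, $m_2\leftrightarrow b$, $n_1\leftrightarrow c$, I expect the coordinate vectors $\lambda_1e_1\leftrightarrow e_1c$, $\lambda_2e_2\leftrightarrow\tfrac12(a-b)$ and $\lambda_3e_3\leftrightarrow-\tfrac12e_1(a+b)$. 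The bridge to the Hermitian picture is the elementary identity $\langle u,v\rangle=\re(u,v)_H$, valid for coordinates relative to $(e_0,e_2)$; applying it to each of the six Gram entries and using $(e_1u,e_1v)_H=(u,v)_H$ reduces them to the Hermitian expressions of~\eqref{eq:hermitianM}, with the off-diagonal signs dictated by the coordinate dictionary. Item~2's matrix is then the specialization $c=0$ (as $n_1=0$), which annihilates the first row and column.

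Finally, for $\jac_\B^{lr}(\Lambda)$ I would split $m_1z_1+m_2z_2+n_1\bar z_1+n_2\bar z_2$ into its $\cc_{e_1}$-part $f_1$ and its $e_2\cc_{e_1}$-part $f_2$ by expanding each coefficient in $(e_0,e_2)$ and pushing $e_2$ to the left (for instance $m_1=\bar a_2-e_2\bar a_1$), read off the eight Wirtinger derivatives of $f_1,f_2$, and obtain the lower two rows by conjugation; this matches the displayed $4\times4$ matrix and is consistent with $n_2=n_1e_2$. I expect the main obstacle to be exactly this bookkeeping: in both the $M_{\Lambda,\B}$ and the Jacobian computations one must track the sign changes produced by the anticommutation $e_2w=\bar we_2$, by conjugation, and by the orientation $e_1e_2=e_3$, and this is where slips are easiest. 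Once the coordinate dictionary $\lambda_1e_1\leftrightarrow e_1c$, $\lambda_2e_2\leftrightarrow\tfrac12(a-b)$, $\lambda_3e_3\leftrightarrow-\tfrac12e_1(a+b)$ is pinned down and checked against the basis maps $\bar\vartheta_{e_m}$, both matrices follow by routine substitution.
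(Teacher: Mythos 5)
Your proposal is correct and follows essentially the same route as the paper's proof: the paper likewise reduces everything to the change of basis between $(\bar\vartheta_{e_0},\dots,\bar\vartheta_{e_3})$ and the maps $z_1,z_2,\bar z_1,\bar z_2$ (it inverts the relation in the opposite direction and extracts the $\lambda_m$ via the scalar product $(\cdot,\cdot)_l$), and then obtains~\eqref{eq:hermitianM} from Corollary~\ref{cor:formmatrix} together with the identity $\langle u_1+e_2u_2,v_1+e_2v_2\rangle=\re\left((u_1,u_2),(v_1,v_2)\right)_H$, exactly as you do. One remark on the ``off-diagonal signs'' you rightly flag as the delicate point: your (correct) dictionary $\lambda_1e_1\leftrightarrow e_1c$, $\lambda_3e_3\leftrightarrow-\tfrac12e_1(a+b)$ yields $\langle\lambda_1e_1,\lambda_3e_3\rangle=-\tfrac12\re(c,a+b)_H$, so the $(1,3)$ and $(3,1)$ entries of~\eqref{eq:hermitianM} as printed appear to carry the wrong sign (test $\Lambda=\bar\vartheta_{e_1}+\tfrac{e_2}{2}\bar\vartheta_{e_2}+\tfrac{e_2}{2}\bar\vartheta_{e_3}$, for which $\langle\lambda_1e_1,\lambda_3e_3\rangle=\tfrac12$ while $\tfrac12\re(c,a+b)_H=-\tfrac12$); this is harmless for the rest of the paper, since only the vanishing of that entry is ever used, but your computation should not be ``corrected'' to match the displayed sign.
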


\begin{proof}
By direct inspection,
\begin{align*}
z_1&=\frac{1}2\bar\vartheta_{e_2}+\frac{1}2\bar\vartheta_{e_3}\,,\quad
z_2=\frac{e_2}2\bar\vartheta_{e_2}-\frac{e_2}2\bar\vartheta_{e_3}\,,\\
\bar z_1&=\frac{1}2\bar\vartheta_{e_0}+\frac{1}2\bar\vartheta_{e_1}\,,\quad
\bar z_2=\frac{e_2}2\bar\vartheta_{e_0}-\frac{e_2}2\bar\vartheta_{e_1}.
\end{align*}
Since $\bar\vartheta_{e_0},\bar\vartheta_{e_1},\bar\vartheta_{e_2},\bar\vartheta_{e_3}$ form a left $\hh$-basis of $\lin^\rr(\hh)$, so do the maps $z_1+z_2e_2\mapsto z_1$, $z_1+z_2e_2\mapsto z_2$, $z_1+z_2e_2\mapsto\bar z_1$, and $z_1+z_2e_2\mapsto \bar z_2$. Thus, the desired quaternions $m_1,m_2,n_1,n_2$ exist. Moreover, taking into account that $(\bar\vartheta_{e_s},\bar\vartheta_{e_t})_l=0$ whenever $s\neq t$, we have
\begin{align*}
\lambda_0&=(\Lambda,\bar\vartheta_{e_0})_l=n_1(\bar z_1,\bar\vartheta_{e_0})_l+n_2(\bar z_2,\bar\vartheta_{e_0})_l=\frac{n_1}2+\frac{n_2e_2}2\,,\\
\lambda_1&=(\Lambda,\bar\vartheta_{e_1})_l=n_1(\bar z_1,\bar\vartheta_{e_1})_l+n_2(\bar z_2,\bar\vartheta_{e_1})_l=\frac{n_1}2-\frac{n_2e_2}2\,,\\
\lambda_2&=(\Lambda,\bar\vartheta_{e_2})_l=m_1(z_1,\bar\vartheta_{e_2})_l+m_2(z_2,\bar\vartheta_{e_2})_l=\frac{m_1}2+\frac{m_2e_2}2\,,\\
\lambda_3&=(\Lambda,\bar\vartheta_{e_3})_l=m_1(z_1,\bar\vartheta_{e_3})_l+m_2(z_2,\bar\vartheta_{e_3})_l=\frac{m_1}2-\frac{m_2e_2}2\,,
\end{align*}
as desired. In the special case when $\Lambda\in\lin^\rr_{\mathcal{RF}}(\hh)$, the equality $\lambda_0=0$ yields $n_2=n_1e_2$. Corollary~\ref{cor:formmatrix} implies that
\begin{align*}
&M_{\Lambda,\B}=\\
&\begin{bmatrix}
|n_1|^2&\frac12\langle n_1 e_1, m_1e_2-m_2\rangle&\frac12\langle n_1, m_1e_2+m_2\rangle\\
\frac12\langle n_1 e_1, m_1e_2-m_2\rangle&\frac14|m_1e_2-m_2|^2&\frac14\langle -m_1e_2+m_2, (m_1e_2+m_2)e_1\rangle\\
\frac12\langle n_1, m_1e_2+m_2\rangle&\frac14\langle -m_1e_2+m_2, (m_1e_2+m_2)e_1\rangle&\frac14|m_1e_2+m_2|^2\\
\end{bmatrix}
\end{align*}
Taking into account the equalities $\langle u_1+e_2u_2,v_1+e_2v_2\rangle=\re\left((u_1,u_2),(v_1,v_2)\right)_H$ and $|u_1+e_2u_2|=\Vert(u_1,u_2)\Vert_H$, valid for all $u_1,u_2,v_1,v_2\in\cc_{e_1}$, the thesis follows.
\end{proof}

\begin{lemma}\label{lem:identitiesdiagonal}
Under the hypotheses of Lemma~\ref{lem:complexifiedLambda1} and under the additional hypothesis that $\Lambda\in\lin^\rr_{\mathcal{RF}}(\hh)$, the matrix $M_{\Lambda,\B}$ is diagonal if, and only if, the following conditions are fulfilled:
\begin{align*}
\overline{a}_1c_1+\overline{a}_2c_2+b_1\overline{c}_1+b_2\overline{c}_2&=0\,,\\
a_1\overline{b}_1-\overline{a}_1b_1+a_2\overline{b}_2-\overline{a}_2b_2&=0\,.
\end{align*}
If this is the case, then $\det(\M_\Lambda)=\frac1{16}\Vert a+b\Vert_H^2\,\Vert a-b\Vert_H^2\,\Vert c\Vert_H^2$.
\end{lemma}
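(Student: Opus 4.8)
The plan is to read off the three independent off-diagonal entries of the symmetric matrix $M_{\Lambda,\B}$ displayed in \eqref{eq:hermitianM}, set each of them to zero, and then translate the resulting three real scalar conditions into the two displayed equations. Writing $m_{12},m_{13},m_{23}$ for the $(1,2),(1,3),(2,3)$ entries, diagonality of $M_{\Lambda,\B}$ is equivalent to the simultaneous vanishing of $\re(e_1c,a-b)_H$, $\re(c,a+b)_H$ and $\re(-a+b,(a+b)e_1)_H$, the numerical factors $\tfrac12,\tfrac14$ being irrelevant. Throughout I would use the convention $(u,v)_H=u_1\overline{v}_1+u_2\overline{v}_2$ fixed by Lemma~\ref{lem:complexifiedLambda1}, and the elementary identity $\re(e_1w)=-\im(w)$ for $w\in\cc_{e_1}$, where $\re,\im$ are taken with respect to $e_1$.

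First I would dispatch $m_{12}$ and $m_{13}$ together. Using sesquilinearity I expand $(c,a+b)_H=(c,a)_H+(c,b)_H$ and $(e_1c,a-b)_H=e_1\big[(c,a)_H-(c,b)_H\big]$. Setting $P:=(c,a)_H=\overline{a}_1c_1+\overline{a}_2c_2$ and $Q:=(c,b)_H$, the vanishing of $m_{13}$ reads $\re(P+Q)=0$ while the vanishing of $m_{12}$ reads $\im(P-Q)=0$. Since $\re(P+Q)=\re(P+\overline{Q})$ and $\im(P-Q)=\im(P+\overline{Q})$, these two real conditions hold together exactly when the single complex equation $P+\overline{Q}=0$ holds; and $P+\overline{Q}=\overline{a}_1c_1+\overline{a}_2c_2+b_1\overline{c}_1+b_2\overline{c}_2$, which is precisely the first displayed condition.

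Next I would treat $m_{23}$. Pulling $\overline{e_1}=-e_1$ out of the antilinear slot gives $(-a+b,(a+b)e_1)_H=-e_1\,(-a+b,a+b)_H$, and expanding the latter yields $(-a+b,a+b)_H=\big(\Vert b\Vert_H^2-\Vert a\Vert_H^2\big)+\big((b,a)_H-(a,b)_H\big)$. Writing $R:=(a,b)_H=a_1\overline{b}_1+a_2\overline{b}_2$, the cross terms give $(b,a)_H-(a,b)_H=\overline{R}-R=-2e_1\im(R)$, so that $(-a+b,(a+b)e_1)_H=-e_1\big(\Vert b\Vert_H^2-\Vert a\Vert_H^2\big)-2\im(R)$. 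Its real part equals $-2\im(R)$, whence $m_{23}=0$ is equivalent to $\im(R)=0$, i.e.\ to $R-\overline{R}=a_1\overline{b}_1-\overline{a}_1b_1+a_2\overline{b}_2-\overline{a}_2b_2=0$, which is the second displayed condition.

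Finally, assuming $M_{\Lambda,\B}$ diagonal, the determinant is just the product of the diagonal entries of \eqref{eq:hermitianM}, namely $\Vert c\Vert_H^2\cdot\tfrac14\Vert a-b\Vert_H^2\cdot\tfrac14\Vert a+b\Vert_H^2$, giving $\det(\M_\Lambda)=\tfrac1{16}\Vert a+b\Vert_H^2\,\Vert a-b\Vert_H^2\,\Vert c\Vert_H^2$. The only real subtlety, and the bookkeeping I would watch most carefully, is the asymmetry between the two target equations: the first is a genuine complex equation encoding the two real constraints $m_{12}=m_{13}=0$, whereas the second is automatically purely imaginary and encodes the single real constraint $m_{23}=0$. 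Matching three real off-diagonal conditions to the two displayed equations is exactly this $2+1$ split, and I would verify the Hermitian-product convention against the normalization $\langle u_1+e_2u_2,v_1+e_2v_2\rangle=\re\big((u_1,u_2),(v_1,v_2)\big)_H$ from Lemma~\ref{lem:complexifiedLambda1} before trusting the signs.
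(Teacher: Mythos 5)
Your proposal is correct and follows essentially the same route as the paper: both read off the three off-diagonal entries of $M_{\Lambda,\B}$ from~\eqref{eq:hermitianM}, observe that the $(1,2)$ and $(1,3)$ conditions combine into the single complex equation $\overline{a}_1c_1+\overline{a}_2c_2+b_1\overline{c}_1+b_2\overline{c}_2=0$ while the $(2,3)$ condition gives the purely imaginary second equation, and then obtain the determinant as the product of the diagonal entries. Your $P,Q,R$ bookkeeping and the explicit $2+1$ split are just a tidier organization of the same coordinate computation carried out in the paper.
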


\begin{proof}
By direct inspection in Lemma~\ref{lem:complexifiedLambda1}, under the additional hypothesis that $\Lambda\in\lin^\rr_{\mathcal{RF}}(\hh)$, the matrix $M_{\Lambda,\B}$ is diagonal if, and only if, the following three conditions are fulfilled:
\begin{align*}
0&=2\re(e_1c,a-b)_H=2\re\left(e_1c_1(\overline{a}_1-\overline{b}_1)+e_1c_2(\overline{a}_2-\overline{b}_2)\right)\\
&=2\re\left(e_1\left(\overline{a}_1c_1+\overline{a}_2c_2-\overline{b}_1c_1-\overline{b}_2c_2\right)\right)\\
&=e_1\left(\overline{a}_1c_1+\overline{a}_2c_2-\overline{b}_1c_1-\overline{b}_2c_2-a_1\overline{c}_1-a_2\overline{c}_2+b_1\overline{c}_1+b_2\overline{c}_2\right)\,,\\
0&=2\re(c,a+b)_H=2\re\left(c_1(\overline{a}_1+\overline{b}_1)+c_2(\overline{a}_2+\overline{b}_2)\right)\\
&=2\re\left(\overline{a}_1c_1+\overline{a}_2c_2+\overline{b}_1c_1+\overline{b}_2c_2\right)\\
&=\overline{a}_1c_1+\overline{a}_2c_2+\overline{b}_1c_1+\overline{b}_2c_2+a_1\overline{c}_1+a_2\overline{c}_2+b_1\overline{c}_1+b_2\overline{c}_2\,,\\
0&=\re(-a+b, (a+b)e_1)_H=\re\left((a_1-b_1)(\overline{a}_1+\overline{b}_1)e_1+(a_2-b_2)(\overline{a}_2+\overline{b}_2)e_1\right)\\
&=\re\left(\left(a_1\overline{b}_1-\overline{a}_1b_1+a_2\overline{b}_2-\overline{a}_2b_2\right)e_1\right)\\
&=-a_1\overline{b}_1+\overline{a}_1b_1-a_2\overline{b}_2+\overline{a}_2b_2\,.
\end{align*}
The first two equalities are equivalent to the mutually equivalent equalities
\begin{align*}
0&=\overline{a}_1c_1+\overline{a}_2c_2+b_1\overline{c}_1+b_2\overline{c}_2\,,\\
0&=\overline{b}_1c_1+\overline{b}_2c_2+a_1\overline{c}_1+a_2\overline{c}_2\,.
\end{align*}
This proves the first statement. Another direct inspection in Lemma~\ref{lem:complexifiedLambda1} proves the second statement.
\end{proof}

\begin{lemma}\label{lem:complexifiedLambda2}
Let $\underline{\Lambda}\in\lin^\rr(\hh)$. Fix an orthonormal basis $\B=(e_0,e_1,e_2,e_3)$ of $\hh$ with $e_0=1$ and $e_1e_2=e_3$. There exist quaternions $\underline{m}_1,\underline{m}_2,\underline{n}_1,\underline{n}_2$ such that
\begin{align*}
\underline{\Lambda}(w_1+e_2w_2) = w_1\underline{m}_1+w_2\underline{m}_2+\bar w_1\underline{n}_1+\bar w_2\underline{n}_2\,,
\end{align*}
for all $w_1,w_2\in\cc_{e_1}$. Moreover, if $\underline{\Lambda}=\sum_{m=0}^3\bar\vartheta_{e_m}\underline{\lambda}_m$, then
\[\underline{\lambda}_0=\frac12(\underline{n}_1+e_2\underline{n}_2), \underline{\lambda}_1=\frac12(\underline{n}_1-e_2\underline{n}_2),\underline{\lambda}_2=\frac12(\underline{m}_1+e_2\underline{m}_2), \underline{\lambda}_3=\frac12(\underline{m}_1-e_2\underline{m}_2)\,.\]
\end{lemma}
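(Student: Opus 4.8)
The plan is to mirror the proof of Lemma~\ref{lem:complexifiedLambda1}, transposing every construction from the left module $\hh=\cc_{e_1}+\cc_{e_1}e_2$ to the right module $\hh=\cc_{e_1}+e_2\cc_{e_1}$ and moving all scalar coefficients from the left to the right. First I would establish existence of $\underline{m}_1,\underline{m}_2,\underline{n}_1,\underline{n}_2$. Restricting $\underline{\Lambda}$ separately to the summands $\cc_{e_1}$ and $e_2\cc_{e_1}$ produces two $\rr$-linear maps $\cc_{e_1}\to\hh$; since any $\rr$-linear map $\phi:\cc_{e_1}\to\hh$ can be written uniquely as $w\mapsto w\,a+\bar w\,b$ (solve $a+b=\phi(1)$ and $a-b=\bar e_1\,\phi(e_1)$), the four quaternions exist and are unique. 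Equivalently, one can note that the four coordinate maps $w_1,w_2,\bar w_1,\bar w_2$, post-composed with right multiplications, span $\lin^\rr(\hh)$, because (as shown in the next step) each of them is a right-$\hh$ combination of the right $\hh$-basis $\bar\vartheta_{e_0},\bar\vartheta_{e_1},\bar\vartheta_{e_2},\bar\vartheta_{e_3}$ of $\lin^\rr(\hh)$.

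The second step is the quaternionic computation expressing the coordinate functions as right-$\hh$ combinations of the $\bar\vartheta_{e_m}$. Using $e_0=1$, $e_1e_2=e_3$, the relation $e_2c=\bar c\,e_2$ for $c\in\cc_{e_1}$, and $\bar x=\bar w_1-\bar w_2e_2$ for $x=w_1+e_2w_2$, I would evaluate $\bar\vartheta_{e_m}(x)=e_m\bar x\,e_m^{-1}$ on $x$ and obtain
\begin{align*}
\bar\vartheta_{e_0}(x)&=\bar w_1-\bar w_2e_2, & \bar\vartheta_{e_1}(x)&=\bar w_1+\bar w_2e_2,\\
\bar\vartheta_{e_2}(x)&=w_1-w_2e_2, & \bar\vartheta_{e_3}(x)&=w_1+w_2e_2.
\end{align*}
Solving these linear relations yields, as elements of $\lin^\rr(\hh)$ with coefficients written on the right,
\begin{align*}
w_1&=\tfrac12\bar\vartheta_{e_2}+\tfrac12\bar\vartheta_{e_3}, & w_2&=\tfrac12\bar\vartheta_{e_2}e_2-\tfrac12\bar\vartheta_{e_3}e_2,\\
\bar w_1&=\tfrac12\bar\vartheta_{e_0}+\tfrac12\bar\vartheta_{e_1}, & \bar w_2&=\tfrac12\bar\vartheta_{e_0}e_2-\tfrac12\bar\vartheta_{e_1}e_2,
\end{align*}
which are exactly the mirror images of the four identities opening the proof of Lemma~\ref{lem:complexifiedLambda1}.

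Finally, I would substitute these four expressions into $\underline{\Lambda}=w_1\underline{m}_1+w_2\underline{m}_2+\bar w_1\underline{n}_1+\bar w_2\underline{n}_2$, use associativity of right composition in the form $(\bar\vartheta_{e_m}c)\,\underline\lambda=\bar\vartheta_{e_m}(c\,\underline\lambda)$, and collect the coefficient of each $\bar\vartheta_{e_m}$. Reading off these coefficients gives $\underline{\lambda}_0=\frac12(\underline{n}_1+e_2\underline{n}_2)$, $\underline{\lambda}_1=\frac12(\underline{n}_1-e_2\underline{n}_2)$, $\underline{\lambda}_2=\frac12(\underline{m}_1+e_2\underline{m}_2)$ and $\underline{\lambda}_3=\frac12(\underline{m}_1-e_2\underline{m}_2)$, as claimed; alternatively the same coefficients follow from $\underline{\lambda}_m=(\bar\vartheta_{e_m},\underline{\Lambda})_r$ together with the orthonormality of $\{\bar\vartheta_{e_m}\}$ with respect to $(\cdot,\cdot)_r$. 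The only real obstacle is bookkeeping: because the module structure on the source and the placement of the coefficients $\underline{\lambda}_m$ are both flipped relative to Lemma~\ref{lem:complexifiedLambda1}, care is needed to keep the factors $e_2\underline{n}_i$ and $e_2\underline{m}_i$ on the correct (left) side inside the parentheses, and to track the quaternionic signs arising from $e_2c=\bar c\,e_2$ and $e_ae_b=-e_be_a$.
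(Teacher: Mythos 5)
Your proposal is correct and follows essentially the same route as the paper: compute $\bar\vartheta_{e_m}(w_1+e_2w_2)$, invert to express $w_1,w_2,\bar w_1,\bar w_2$ as right-$\hh$ combinations of the $\bar\vartheta_{e_m}$ (your four identities match the paper's exactly), and then read off $\underline{\lambda}_m=(\bar\vartheta_{e_m},\underline{\Lambda})_r$ using orthogonality. The only difference is your extra elementary existence argument via $w\mapsto wa+\bar wb$, which the paper replaces by the equivalent observation that the coordinate maps form a right $\hh$-basis of $\lin^\rr(\hh)$.
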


\begin{proof}
By direct inspection,
\begin{align*}
\bar\vartheta_{e_0}(w_1+e_2w_2)&=\bar w_1-\bar w_2e_2\,,\quad
\bar\vartheta_{e_1}(w_1+e_2w_2)=\bar w_1+\bar w_2e_2\,,\\
\bar\vartheta_{e_2}(w_1+e_2w_2)&=w_1-w_2 e_2\,,\quad
\bar\vartheta_{e_3}(w_1+e_2w_2)=w_1+w_2e_2\,,
\end{align*}
whence
\begin{align*}
w_1&=\bar\vartheta_{e_2}\frac{1}2+\bar\vartheta_{e_3}\frac{1}2\,,\quad
w_2=\bar\vartheta_{e_2}\frac{e_2}2-\bar\vartheta_{e_3}\frac{e_2}2\,,\\
\bar w_1&=\bar\vartheta_{e_0}\frac{1}2+\bar\vartheta_{e_1}\frac{1}2\,,\quad
\bar w_2=\bar\vartheta_{e_0}\frac{e_2}2-\bar\vartheta_{e_1}\frac{e_2}2.
\end{align*}
Since $\bar\vartheta_{e_0},\bar\vartheta_{e_1},\bar\vartheta_{e_2},\bar\vartheta_{e_3}$ form a right $\hh$-basis of $\lin^\rr(\hh)$, so do the maps $w_1+e_2w_2\mapsto w_1$, $w_1+e_2w_2\mapsto w_2$, $w_1+e_2w_2\mapsto\bar w_1$, and $w_1+e_2w_2\mapsto \bar w_2$. Thus, the desired quaternions $\underline{m}_1,\underline{m}_2,\underline{n}_1,\underline{n}_2$ exist. Moreover, taking into account that $(\bar\vartheta_{e_s},\bar\vartheta_{e_t})_r=0$ whenever $s\neq t$, we have
\begin{align*}
\underline{\lambda}_0&=(\bar\vartheta_{e_0},\underline{\Lambda})_r=(\bar\vartheta_{e_0},\bar w_1)_r\underline{n}_1+(\bar\vartheta_{e_0},\bar w_2)_r\underline{n}_2=\frac12\underline{n}_1+\frac{e_2}2\underline{n}_2\,,\\
\underline{\lambda}_1&=(\bar\vartheta_{e_1},\underline{\Lambda})_r=(\bar\vartheta_{e_1},\bar w_1)_r\underline{n}_1+(\bar\vartheta_{e_1},\bar w_2)_r\underline{n}_2=\frac12\underline{n}_1-\frac{e_2}2\underline{n}_2\,,\\
\underline{\lambda}_2&=(\bar\vartheta_{e_2},\underline{\Lambda})_r=(\bar\vartheta_{e_2},w_1)_r\underline{m}_1+(\bar\vartheta_{e_2},w_2)_r\underline{m}_2=\frac12\underline{m}_1+\frac{e_2}2\underline{m}_2\,,\\
\underline{\lambda}_3&=(\bar\vartheta_{e_3},\underline{\Lambda})_r=(\bar\vartheta_{e_3},w_1)_r\underline{m}_1+(\bar\vartheta_{e_3},w_2)_r\underline{m}_2=\frac12\underline{m}_1-\frac{e_2}2\underline{m}_2\,,
\end{align*}
as desired. 
\end{proof}

We are now ready to prove Theorem~\ref{thm:adjugate}.

\begin{proof}[Proof of Theorem~\ref{thm:adjugate}]
Since $\Lambda\in\lin^\rr_{\mathcal{RF}}(\hh)$, Corollary~\ref{cor:formmatrix} applies to $\Lambda$. Thus, there exists an orthonormal basis $\B=(e_0,e_1,e_2,e_3)$ of $\hh$, with $e_0=1$ and $e_1e_2=e_3$, such that $M_{\Lambda,\B}$ is diagonal. Let us set $A:=\jac_\B^{lr}(\Lambda)$: we have
\[A=\begin{bmatrix}
\overline{a}_2&b_1&c_1&-\overline{c}_2\\
-\overline{a}_1&b_2&c_2&\overline{c}_1\\
\overline{c}_1&-c_2&a_2&\overline{b}_1\\
\overline{c}_2&c_1&-a_1&\overline{b}_2
\end{bmatrix}\]
for appropriate $a=(a_1,a_2),b=(b_1,b_2),c=(c_1,c_2)\in\cc_{e_1}^2$. By Definition~\ref{def:adjugate}, $\Lambda^{\mathrm{adjg}}$ is the real linear endomorphism of $\hh$ with $\jac_\B^{rl}(\Lambda^{\mathrm{adjg}})=A^{\mathrm{adjg}}=(\underline{a}_{ij})_{i,j=1}^4$ with $\underline{a}_{ij}=(-1)^{i+j}\det(A_{ji})$ (see Definition~\ref{def:adjugatematrix}). Lemma~\ref{lem:complexifiedLambda2} explains how to find quaternions $\underline{\lambda}_0,\underline{\lambda}_1,\underline{\lambda}_1,\underline{\lambda}_3$ such that $\Lambda^{\mathrm{adjg}}=\sum_{m=0}^3\bar\vartheta_{e_m}\underline{\lambda}_m$: in particular, $\underline{\lambda}_0=\frac12(\underline{n}_1+e_2\underline{n}_2)$, assuming $\underline{m}_1,\underline{m}_2,\underline{n}_1,\underline{n}_2$ to be quaternions such that
\[\Lambda^{\mathrm{adjg}}(w_1+e_2w_2) = w_1\underline{m}_1+w_2\underline{m}_2+\bar w_1\underline{n}_1+\bar w_2\underline{n}_2\,.\]
By direct inspection, $\underline{n}_1=\underline{a}_{13}+\underline{a}_{23}e_2$ and $\underline{n}_2=\underline{a}_{14}+\underline{a}_{24}e_2$. Thus,
\begin{align*}
4|\underline{\lambda}_0|^2&=|\underline{n}_1+e_2\underline{n}_2|^2=|\underline{a}_{13}-\overline{\underline{a}}_{24}|^2+|\underline{a}_{23}+\overline{\underline{a}}_{14}|^2\\
&=\left|\det(A_{31})-\overline{\det(A_{42})}\right|^2+\left|\det(A_{32})+\overline{\det(A_{41})}\right|^2\,.
\end{align*}
Now,
\begin{align*}
\det(A_{31})-\overline{\det(A_{42})}&=\det\begin{bmatrix}
b_1&c_1&-\overline{c}_2\\
b_2&c_2&\overline{c}_1\\
c_1&-a_1&\overline{b}_2
\end{bmatrix}-
\det\begin{bmatrix}
a_2&\overline{c}_1&-c_2\\
-a_1&\overline{c}_2&c_1\\
c_1&\overline{a}_2&b_1\\
\end{bmatrix}\\
&=b_1(c_2\overline{b}_2+\overline{c}_1a_1)-b_2(c_1\overline{b}_2-\overline{c}_2a_1)+c_1\Vert c\Vert_H^2\\
&-a_2(\overline{c}_2b_1-c_1\overline{a}_2)-a_1(\overline{c}_1b_1+c_2\overline{a}_2)-c_1\Vert c\Vert_H^2\\
&=(|a_2|^2-|b_2|^2)c_1+b_1\overline{(b_2\overline{c}_2)}-a_1(\overline{a}_2c_2)+(a_1b_2-a_2b_1)\overline{c}_2\\
&=(|a_2|^2-|b_2|^2)c_1-b_1(a_1\overline{c}_1+a_2\overline{c}_2+\overline{b}_1c_1)+a_1(\overline{a}_1c_1+b_1\overline{c}_1+b_2\overline{c}_2)\\
&+(a_1b_2-a_2b_1)\overline{c}_2\\
&=(|a_2|^2-|b_2|^2)c_1+(|a_1|^2-|b_1|^2)c_1+2(a_1b_2-a_2b_1)\overline{c}_2\\
&=(\Vert a\Vert_H^2-\Vert b\Vert_H^2)c_1+2(a_1b_2-a_2b_1)\overline{c}_2\\
\det(A_{32})+\overline{\det(A_{41})}&=\det\begin{bmatrix}
\overline{a}_2&c_1&-\overline{c}_2\\
-\overline{a}_1&c_2&\overline{c}_1\\
\overline{c}_2&-a_1&\overline{b}_2
\end{bmatrix}+
\det\begin{bmatrix}
\overline{b}_1&\overline{c}_1&-c_2\\
\overline{b}_2&\overline{c}_2&c_1\\
-\overline{c}_2&\overline{a}_2&b_1
\end{bmatrix}\\
&=\overline{a}_2(c_2\overline{b}_2+\overline{c}_1a_1)+\overline{a}_1(c_1\overline{b}_2-\overline{c}_2a_1)+\overline{c}_2\Vert c\Vert_H^2\\
&+\overline{b}_1(\overline{c}_2b_1-c_1\overline{a}_2)-\overline{b}_2(\overline{c}_1b_1+c_2\overline{a}_2)-\overline{c}_2\Vert c\Vert_H^2\\
&=(\overline{a}_1\overline{b}_2-\overline{a}_2\overline{b}_1)c_1+\overline{a}_2\overline{(\overline{a}_1c_1)}-\overline{b}_2(b_1\overline{c}_1)+(|b_1|^2-|a_1|^2)\overline{c}_2\\
&=(\overline{a}_1\overline{b}_2-\overline{a}_2\overline{b}_1)c_1-\overline{a}_2(a_2\overline{c}_2+\overline{b}_1c_1+\overline{b}_2c_2)+\overline{b}_2(\overline{a}_1c_1+\overline{a}_2c_2+b_2\overline{c}_2)\\
&+(|b_1|^2-|a_1|^2)\overline{c}_2\\
&=2(\overline{a}_1\overline{b}_2-\overline{a}_2\overline{b}_1)c_1+(|b_2|^2-|a_2|^2)\overline{c}_2+(|b_1|^2-|a_1|^2)\overline{c}_2\\
&=2(\overline{a}_1\overline{b}_2-\overline{a}_2\overline{b}_1)c_1+(\Vert b\Vert_H^2-\Vert a\Vert_H^2)\overline{c}_2\,,
\end{align*}
where the fourth and tenth equalities follow from Lemma~\ref{lem:identitiesdiagonal}. We set $\beta:=2(a_1b_2-a_2b_1)$ and note, for future reference, that
\begin{align*}
|\beta|^2&=4|a_1|^2|b_2|^2+4|a_2|^2|b_1|^2-8\re(a_1\overline{a}_2\overline{b}_1b_2)\\
&=4\Vert a\Vert_H^2\Vert b\Vert_H^2-4|a_1|^2|b_1|^2-4|a_2|^2|b_2|^2-8\re(a_1\overline{a}_2\overline{b}_1b_2)\\
&=4\Vert a\Vert_H^2\Vert b\Vert_H^2-4|a_1\overline{b}_1+a_2\overline{b}_2|^2\\
&=4\Vert a\Vert_H^2\Vert b\Vert_H^2-4\re^2(a_1\overline{b}_1+a_2\overline{b}_2)\,,
\end{align*}
where the last equality follows from the inclusion $a_1\overline{b}_1+a_2\overline{b}_2\in\rr$ (see Lemma~\ref{lem:identitiesdiagonal}). We compute
\begin{align*}
4|\underline{\lambda}_0|^2&=\left|\det(A_{31})-\overline{\det(A_{42})}\right|^2+\left|\det(A_{32})+\overline{\det(A_{41})}\right|^2\\
&=\left|(\Vert a\Vert_H^2-\Vert b\Vert_H^2)c_1+\beta\overline{c}_2\right|^2+\left|\overline{\beta}c_1+(\Vert b\Vert_H^2-\Vert a\Vert_H^2)\overline{c}_2\right|^2\\
&=((\Vert a\Vert_H^2-\Vert b\Vert_H^2)^2+|\beta|^2)\Vert c\Vert_H^2+2(\Vert a\Vert_H^2-\Vert b\Vert_H^2)\re(\overline{\beta}c_1c_2)+2(\Vert b\Vert_H^2-\Vert a\Vert_H^2)\re(\overline{\beta}c_1c_2)\\
&=\left((\Vert a\Vert_H^2-\Vert b\Vert_H^2)^2+4\Vert a\Vert_H^2\Vert b\Vert_H^2-4\re^2(a_1\overline{b}_1+a_2\overline{b}_2)\right)\Vert c\Vert_H^2\\
&=\left((\Vert a\Vert_H^2+\Vert b\Vert_H^2)^2-4\re^2(a_1\overline{b}_1+a_2\overline{b}_2)\right)\Vert c\Vert_H^2\\
&=\left((|a_1|^2+|b_1|^2+|a_2|^2+|b_2|^2)^2-(2\re(a_1\overline{b}_1)+2\re(a_2\overline{b}_2))^2\right)\Vert c\Vert_H^2\\
&=\left((|a_1|^2+|b_1|^2+2\re(a_1\overline{b}_1)+|a_2|^2+|b_2|^2+2\re(a_2\overline{b}_2)\right)\\
&\cdot\left(|a_1|^2+|b_1|^2-2\re(a_1\overline{b}_1)+|a_2|^2+|b_2|^2-2\re(a_2\overline{b}_2)\right)\Vert c\Vert_H^2\\
&=(|a_1+b_1|^2+|a_2+b_2|^2)(|a_1-b_1|^2+|a_2-b_2|^2)\Vert c\Vert_H^2\\
&=\Vert a+b\Vert_H^2\,\Vert a-b\Vert_H^2\,\Vert c\Vert_H^2\\
&=16 \det(\M_\Lambda)\,,
\end{align*}
where the last equality follows from Lemma~\ref{lem:identitiesdiagonal}. The thesis immediately follows.
\end{proof}

As a byproduct of the previous proof, we can state the analog, for right Fueter-regular functions, of~\cite[Proposition 4]{perottibiregular}.

\begin{proposition}\label{prop:abc}
If $\Lambda\in\lin^\rr_{\mathcal{RF}}(\hh)$ has
\[\jac_\B^{lr}(\Lambda)=\begin{bmatrix}
\overline{a}_2&b_1&c_1&-\overline{c}_2\\
-\overline{a}_1&b_2&c_2&\overline{c}_1\\
\overline{c}_1&-c_2&a_2&\overline{b}_1\\
\overline{c}_2&c_1&-a_1&\overline{b}_2
\end{bmatrix}\,,\]
then
\begin{align*}
\det(\M_\Lambda)&=\frac1{16}\left|(|a_2|^2-|b_2|^2)c_1+(-a_1\overline{a}_2+b_1\overline{b}_2)c_2+(a_1b_2-a_2b_1)\overline{c}_2\right|^2\\
&+\frac1{16}\left|(\overline{a}_1\overline{b}_2-\overline{a}_2\overline{b}_1)c_1+(a_1\overline{a}_2-b_1\overline{b}_2)\overline{c}_1+(|b_1|^2-|a_1|^2)\overline{c}_2\right|^2\,.
\end{align*}
As a consequence: there exist $g,h\in\s$ such that $\Lambda\in\lin^\cc_{gh}(\hh_l,\hh_r)$ if, and only if,
\begin{align*}
(|a_2|^2-|b_2|^2)c_1+(-a_1\overline{a}_2+b_1\overline{b}_2)c_2+(a_1b_2-a_2b_1)\overline{c}_2&=0\,,\\
(\overline{a}_1\overline{b}_2-\overline{a}_2\overline{b}_1)c_1+(a_1\overline{a}_2-b_1\overline{b}_2)\overline{c}_1+(|b_1|^2-|a_1|^2)\overline{c}_2&=0\,.
\end{align*}
\end{proposition}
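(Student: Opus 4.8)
The plan is to harvest the desired formula directly from the computations already performed in the proof of Theorem~\ref{thm:adjugate}, stopping one step earlier, \emph{before} the diagonality conditions of Lemma~\ref{lem:identitiesdiagonal} are invoked. I set $A:=\jac_\B^{lr}(\Lambda)$, the given $4\times4$ matrix with entries in $\cc_{e_1}$. By Definition~\ref{def:adjugate}, the adjugate $\Lambda^{\mathrm{adjg}}$ is characterized by $\jac_\B^{rl}(\Lambda^{\mathrm{adjg}})=A^{\mathrm{adjg}}=(\underline a_{ij})$ with $\underline a_{ij}=(-1)^{i+j}\det(A_{ji})$. Applying Lemma~\ref{lem:complexifiedLambda2} to $\Lambda^{\mathrm{adjg}}$, I read off $\underline\lambda_0=\tfrac12(\underline n_1+e_2\underline n_2)$, where $\underline n_1=\underline a_{13}+\underline a_{23}e_2$ and $\underline n_2=\underline a_{14}+\underline a_{24}e_2$ are the coefficients of $\bar w_1,\bar w_2$. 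Crucially, none of these identifications uses any diagonality hypothesis, so they hold for the arbitrary orthonormal basis $\B$ (with $e_0=1,e_1e_2=e_3$) implicit in the statement of Proposition~\ref{prop:abc}.

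The first key step is to rewrite $\underline n_1+e_2\underline n_2$ using the rules $e_2z=\bar z\,e_2$ for $z\in\cc_{e_1}$ and $e_2^2=-1$, which give
\[\underline n_1+e_2\underline n_2=(\underline a_{13}-\overline{\underline a}_{24})+(\underline a_{23}+\overline{\underline a}_{14})e_2\,,\]
and hence $4|\underline\lambda_0|^2=|\underline a_{13}-\overline{\underline a}_{24}|^2+|\underline a_{23}+\overline{\underline a}_{14}|^2$ by orthogonality of the $\cc_{e_1}$ and $e_2\cc_{e_1}$ summands. Since $\underline a_{13}-\overline{\underline a}_{24}=\det(A_{31})-\overline{\det(A_{42})}$ and $\underline a_{23}+\overline{\underline a}_{14}=-\bigl(\det(A_{32})+\overline{\det(A_{41})}\bigr)$, I am reduced to the two $3\times3$ cofactor combinations. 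The second, computational, step is exactly the cofactor expansion already carried out inside the proof of Theorem~\ref{thm:adjugate}: collecting terms but \emph{not} yet applying Lemma~\ref{lem:identitiesdiagonal} yields
\[\det(A_{31})-\overline{\det(A_{42})}=(|a_2|^2-|b_2|^2)c_1+(-a_1\overline a_2+b_1\overline b_2)c_2+(a_1b_2-a_2b_1)\overline c_2\]
and
\[\det(A_{32})+\overline{\det(A_{41})}=(\overline a_1\overline b_2-\overline a_2\overline b_1)c_1+(a_1\overline a_2-b_1\overline b_2)\overline c_1+(|b_1|^2-|a_1|^2)\overline c_2\,,\]
which are precisely the two bracketed expressions appearing in the statement. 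Substituting into $4|\underline\lambda_0|^2=|\cdots|^2+|\cdots|^2$ and invoking Theorem~\ref{thm:adjugate} in the basis-independent form $\det(\M_\Lambda)=\tfrac14|\underline\lambda_0|^2$ produces the asserted identity for $\det(\M_\Lambda)$.

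For the final equivalence I combine this identity with Corollary~\ref{cor:adjugate}: being a sum of two nonnegative squares, $\det(\M_\Lambda)$ vanishes if and only if both bracketed expressions vanish, while the equivalence of properties \emph{1} and \emph{3} in Corollary~\ref{cor:adjugate} states that $\det(\M_\Lambda)=0$ is exactly the condition that there exist $g,h\in\s$ with $\Lambda\in\lin^\cc_{gh}(\hh_l,\hh_r)$. I expect no genuine obstacle here, since the only substantive labour is the pair of $3\times3$ determinant expansions, and those are already recorded in the proof of Theorem~\ref{thm:adjugate}. The single point requiring care is that the diagonality conditions of Lemma~\ref{lem:identitiesdiagonal} must \emph{not} be used: Proposition~\ref{prop:abc} is asserted for the general Jacobian form with arbitrary $a,b,c$, and the factorization of $\det(\M_\Lambda)$ into a product of three norms found in Theorem~\ref{thm:adjugate} is merely the special case obtained once those conditions are imposed.
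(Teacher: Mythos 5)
Your proposal is correct and follows essentially the same route as the paper: the paper's proof likewise invokes Theorem~\ref{thm:adjugate} for $\det(\M_\Lambda)=\frac14|\underline{\lambda}_0|^2$ and then reruns the cofactor computation from that theorem's proof, stopping at the stage before Lemma~\ref{lem:identitiesdiagonal} is applied, to obtain exactly the two bracketed expressions. Your explicit sign bookkeeping for $\underline{a}_{23}+\overline{\underline{a}}_{14}$ and the appeal to Corollary~\ref{cor:adjugate} for the final equivalence only spell out steps the paper leaves implicit.
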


\begin{proof}
Theorem~\ref{thm:adjugate} tells us that $\det(\M_\Lambda)=\frac14|\underline{\lambda}_0|^2$. Going through the proof of Theorem~\ref{thm:adjugate} without the assumption that $M_{\Lambda,\B}$ be diagonal, we find that
\begin{align*}
4|\underline{\lambda}_0|^2&=\left|\det(A_{31})-\overline{\det(A_{42})}\right|^2+\left|\det(A_{32})+\overline{\det(A_{41})}\right|^2\\
&=\left|(|a_2|^2-|b_2|^2)c_1+(-a_1\overline{a}_2+b_1\overline{b}_2)c_2+(a_1b_2-a_2b_1)\overline{c}_2\right|^2\\
&+\left|(\overline{a}_1\overline{b}_2-\overline{a}_2\overline{b}_1)c_1+(a_1\overline{a}_2-b_1\overline{b}_2)\overline{c}_1+(|b_1|^2-|a_1|^2)\overline{c}_2\right|^2\,.
\end{align*}
The thesis immediately follows.
\end{proof}

The next remark will allow us to prove Theorem~\ref{thm:differentialcriterionholomorphy}.

\begin{remark}\label{rmk:complexjacobian}
Fix a connected open subset $U$ of $\hh$ and a $C^1$ function $f:U\to\hh$. Fix an orthonormal basis $\B=(e_0,e_1,e_2,e_3)$ of $\hh$ with $e_0=1$ and $e_1e_2=e_3$. Let us split the quaternionic variable of $f$ as $z_1+z_2e_2$, with $z_1,z_2$ in $\cc_{e_1}$ and $f$ as $f(z_1+z_2e_2)=f_1(z_1,z_2)+e_2f_2(z_1,z_2)$. Lemma~\ref{lem:complexifiedLambda1} applies to $df_p$ at every point $p\in U$ and yields that $df=\sum_{m=0}^3\lambda_m\bar\vartheta_{e_m}$, where
\begin{align*}
2\lambda_0&=\frac{\partial f_1}{\partial \bar z_1}-\overline{\frac{\partial f_2}{\partial \bar z_2}}+e_2\left(\frac{\partial f_2}{\partial \bar z_1}+\overline{\frac{\partial f_1}{\partial \bar z_2}}\right)\,,\\
2\lambda_1&=\frac{\partial f_1}{\partial \bar z_1}+\overline{\frac{\partial f_2}{\partial \bar z_2}}+e_2\left(\frac{\partial f_2}{\partial \bar z_1}-\overline{\frac{\partial f_1}{\partial \bar z_2}}\right)\,,\\
2\lambda_2&=\frac{\partial f_1}{\partial z_1}-\overline{\frac{\partial f_2}{\partial z_2}}+e_2\left(\frac{\partial f_2}{\partial z_1}+\overline{\frac{\partial f_1}{\partial z_2}}\right)\,,\\
2\lambda_3&=\frac{\partial f_1}{\partial z_1}+\overline{\frac{\partial f_2}{\partial z_2}}+e_2\left(\frac{\partial f_2}{\partial z_1}-\overline{\frac{\partial f_1}{\partial z_2}}\right)\,.
\end{align*}
In the case when $f$ is regular, which is equivalent to $\frac{\partial f_1}{\partial \bar z_1}=\overline{\frac{\partial f_2}{\partial \bar z_2}}$ and $\frac{\partial f_2}{\partial \bar z_1}=-\overline{\frac{\partial f_1}{\partial \bar z_2}}$, then equality~\eqref{eq:hermitianM} holds with
\begin{align*}
a:=\left(-\overline{\frac{\partial f_2}{\partial z_1}},\overline{\frac{\partial f_1}{\partial z_1}}\right),\ 
b:=\left(\frac{\partial f_1}{\partial z_2},\frac{\partial f_2}{\partial z_2}\right),\ 
c:=\left(\frac{\partial f_1}{\partial \bar z_1},\frac{\partial f_2}{\partial \bar z_1}\right)\in\cc_{e_1}^2\,.
\end{align*}
Finally, $df_p\in\lin^\cc_{e_1,e_1}(\hh_l,\hh_r)$, which is equivalent to $\frac{\partial f_1}{\partial \bar z_1}=\frac{\partial f_1}{\partial \bar z_2}=\frac{\partial f_2}{\partial \bar z_1}=\frac{\partial f_2}{\partial \bar z_2}\equiv0$, implies $df_p=\lambda_2\bar\vartheta_{e_2}+\lambda_3\bar\vartheta_{e_3}$.
\end{remark}

\begin{proof}[Proof of Theorem~\ref{thm:differentialcriterionholomorphy}]
The thesis follows immediately from Proposition~\ref{prop:abc} if we take into account Remark~\ref{rmk:complexjacobian}.
\end{proof}

%%%%%%%%%%%%%%%%%%%%%%%

%\vfill

%%%%%%%%%%%%%%%%%%%%%%%%%%%%%
%% BIBLIOGRAPHY

%\vfill

%%%%%%%%%%%%%%%%%%%%%%%%%%%%%

\end{document}